\newtheorem{thm}{Theorem}[section]
\newtheorem{lem}{Lemma}[section]
\newtheorem{prop}{Proposition}[section]
\newtheorem{cor}{Corollary}[section]
\newtheorem{rem}{Remark}[section]
\title{Exponential Asymptotics of Ricci Flow Neckpinch}
\author{Hamidreza Mahmoudian}
\date{}
\begin{document}

\maketitle 
\begin{center}
    \large Abstract
\end{center}

In this article we examine the formation of cylindrical neckpinch singularities in Ricci flow in compact manifolds. Rigorous examples of neckpinch singularity for rotationally symmetric initial data were first constructed by Angenent and Knopf in \cite{neckpinch}, and examples with given asymptotic behaviour were constructed in \cite{precise} and \cite{degenneck}. Here we discuss the asymptotic behaviour of the flow under Type-I assumption for general symmetric initial data, and show the asymptotic profiles in \cite{precise} and \cite{degenneck} are the only possibilities.

\section{Introduction}
Let $M^{n+1}$ be a closed Riemannian manifold and let $g(t)$ be a solution of Ricci flow
$$\frac{\partial g}{\partial t} = -2 Rc[g(t)] \qquad 0\leq t < T$$
on $M$ with first singularity time $T$. By Hamilton's classical result \cite{hamilton} this is equivalent to
$$\limsup_{t \to T} \sup_M |Rm| = \infty$$

For any such Ricci flow, we can define its \textit{dynamic blow-up} by $$\tilde{g}(\tau) = \frac{g(t)}{T-t} \quad \text{where} \quad \tau=-\log(T-t)$$
More generally, we can define a \textit{sequential blow up} to be any pointed sequence of the form $(M, g_i(t), p_i)$, where $p_i \in M$ and
$$g_i(t) = \lambda_i g(t_i+ \frac{t}{\lambda_i})$$
for some $t_i \to T$ and $\lambda_i \to \infty$.
\newpage

We say the flow \textit{develops a neckpinch singularity at $(p,T)$} if $$(M, \tilde{g}(\tau), p) \to (\mathbb{S}^{n} \times \mathbb{R}, 2(n-1)g_{can}\oplus g_{euc}, p_\infty)$$ in the smooth Gromov-Hausdorff sense. Here $g_{can}$ and $g_{euc}$ are the canonical round metric of the unit sphere $\mathbb{S}^{n}$ and the Euclidean metric of $\mathbb{R}$. In this work, we examine the specific rate of convergence to cylinder and the asymptotic shape of $\tilde{g}$ for a neckpinch under the assumption of rotational symmetry.

In recent years, there has been much progress regarding both analysis of limit spaces arising after blow ups of Ricci flow solutions, and applications of fine asymptotic analysis of such convergence. 

Recall that the flow is said to be \textit{(globally) Type-I} if it satisfies
$$\sup_M |Rm(p, t)| \leq \frac{C}{T-t}$$
Limits of Type-I blowups around a sequence $(p,t_i)$ of space-time points, with a fixed $p \in M$, have been studied by Naber \cite{Nabershrinker}, Enders-Muller-Topping \cite{EMTshrinker} and Muller-Mantegazza \cite{shrinkerlimit}. Under Type-I assumption, such blowup sequences will converge to a gradient shrinker, which will be non-flat if $p$ is a singular point (c.f. \cite{EMTshrinker}). For sequences $(p_i, t_i)$ with varying base point in $M$, Bamler's recent theory of \textit{F-convergence} shows convergence to (possibly singular) Ricci solitons in a weak sense. It is worth pointing out that Ricci shrinkers with vanishing Weyl tensor have been classified by combined work of \cite{rotsymshrink} and \cite{harmweyl}: The only examples are the round spheres $\mathbb{S}^n$, the round cylinders $\mathbb{S}^n\times \mathbb{R}$, and the flat space $\mathbb{R}^n$.

On the other hand, when discussing asymptotic shrinkers arising from rescaling ancient solutions, a fine description of the convergence has been proven useful in classification of the ancient solutions. For instance, under the assumptions of rotational symmetry, such asymptotic methods have been used to prove uniqueness of Bryant soliton among certain classes of complete $\kappa-$solutions; in dimension $3$ among solutions with nonnegative curvature and positive sectional curvature by Brendle \cite{brendlebarrier}, and in dimension $n\geq4$ among uniformly PIC and weakly PIC2 solutions by Li and Zhang \cite{LiZhang} (See also \cite{BreKnaff} for removing the symmetry assumption). Similarly, for \textit{compact} ancient Ricci flows, the precise asymptotic behaviour derived in \cite{compactancient3d} has been used in \cite{BDSK}to show uniqueness of Perelman solutions as the only possibility besides round spheres. Similar classifications have been done for mean curvature flow.

In the context of neckpinch singularities, the first rigorous examples of the phenomenon were constructed by Angenent and Knopf in \cite{neckpinch}. These are Type-I examples starting from a dumbbell-shaped initial data, with two large spheres joined by a thin cylinder neck. For an open set of such initial data, precise asymptotic behavior was derived in \cite{precise}. As one might expect, these examples become singular and \textit{pinch off} in a single point. On the other hand, examples of \textit{degenerate} neckpinches were constructed by Angenent, Isenberg and Knopf in \cite{degenneck}. By degenerate, we mean the rescaled flow converges to a cylinder or the Bryant soliton, depending on the choice of blow up sequence. Contrary to the previous case, these examples are Type-II and become singular on an entire compact subset of the manifold.

In this work, it is shown that the behaviors constructed in those examples are the only possibilities in rotationally symmetric Ricci flows. One motivation for deriving fine asymptotic information is proving that singularities are generically isolated points. More generally, knowledge of asymptotic behavior on large scales combined with knowledge of possible singularity models will lead to information about the structure of high curvature regions. In our situation, it would be desirable to exclude \textit{long tubes} that do not close with a cap. These would correspond to a compact subset somewhere in the middle of the manifold (as opposed to a region containing the umbilical \textit{tips} on either end) that is becoming singular. 

\textbf{Acknowledgements}. I would like to thank Natasa Sesum, Dan Knopf, Sigurd Angenent and Max Hallgren for their numerous helpful comments and suggestions during the development of this work. 
\section{Main Result and Outline of The Proof}

\subsection{Statement of The Main Theorem}

In order to state our result precisely, we follow the terminology in \cite{degenneck, neckpinch, precise}.

We assume $M$ is diffeomorphic to $S^{n+1}$ which we regard as $[-1,1]\times S^n$ with $\{-1\} \times S^n$ and $\{1\} \times S^n$ identified to single points $S$ and $N$. We say $g(t)$ is \textit{rotationally symmetric}, if it is preserved by the $O(n+1)$ action on $S^n$ fibres. In that case we can write
$$g(t) = \varphi^2(t,x) dx^2 + \psi^2(t,x) g_{can}$$
We call $\psi$ the \textit{radius}, and the $S^n$-orbit $x=0$ \textit{the equator}. We say $g(t)$ has \textit{reflection symmetry} if $\varphi$ and $\psi$ can be chosen to be even functions of $x$ (Both rotation and reflection symmetry are preserved by the flow).

It is convenient to work with the geometric coordinate $s$, defined as
\begin{gather*}
    s(t,x) = \int_0^x \varphi(t,r) \, dr
\end{gather*}
It measures the geodesic distance between the $O(n+1)$-orbits at the equator and at $x$. We call a local minimum of $\psi$ a \textit{neck} and a local maximum of $\psi$ a \textit{bump}. We also refer to the points at $x=\pm1$ as \emph{Poles} or \emph{Tips} and to the regions between a pole and the first bump next to it as a \emph{Polar Caps}.

Our main result is the following Theorem:

\begin{thm} \label{main}
Let $(M, g(t)), \, 0 \leq t <T$ be a Type-I solution to Ricci flow with first singular time $T$ that has rotational and reflection symmetry as above.

Assume $\lim_{t\to T} \psi(0,t)=0$, and that $g(t)$ develops a neckpinch singularity at the equator as $t\to T$. In particular,
$$u(\tau, \sigma) = \frac{\psi(t, s)}{\sqrt{2(n-1)(T-t)}} \to 1 \qquad \text{uniformly for}\,\, |\sigma| \leq R \,\, \text{as} \,\, \tau \to \infty$$
for $\sigma=\frac{s}{\sqrt{T-t}}$ and $\tau=-\log(T-t)$. Then one of the following holds:
\begin{enumerate}
    \item $|u-1| \leq C_A \exp(-A\tau)$ for every $A$, for $\tau \geq \tau_A$ and $|\sigma| \leq A\sqrt{\tau}$.
    \item We have
    $$u(\tau, \sigma) = 1 + \frac{\sigma^2-2}{8\tau} + o(\frac{1}{\tau})$$
    uniformly over compact sets $|\sigma| \leq R$.
    \item For some positive integer $m \geq 3$ and some constant $C$, we have
    $$u(\tau, \sigma) = 1 + C e^{\frac{2-m}{2} \tau} h_m(\sigma) + o(e^{\frac{2-m}{2} \tau})$$
    uniformly over compact sets $|\sigma| \leq R$. Here $h_m$ is the $m$-th modified Hermite polynomial.
\end{enumerate}
\end{thm}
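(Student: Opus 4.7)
The plan is to linearize around the cylindrical profile $u\equiv 1$ and carry out a spectral analysis in a Hermite-weighted $L^2$ space, then extract the trichotomy via a Merle--Zaag type dichotomy applied to the ODE system of Fourier coefficients. Writing $u=1+v$ and using the standard derivation of the rescaled equation for $\psi$ in the variables $(\tau,\sigma)$ (as in \cite{precise, degenneck}), one obtains
$$v_\tau = \mathcal{L}v + \mathcal{N}(v,v_\sigma,v_{\sigma\sigma}),$$
where $\mathcal{L}v = v_{\sigma\sigma} - \tfrac{\sigma}{2}v_\sigma + v$ is the Hermite--Ornstein--Uhlenbeck operator on $L^2(\mathbb{R}, e^{-\sigma^2/4}\,d\sigma)$, and $\mathcal{N}$ collects the quadratic-and-higher terms. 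The spectrum of $\mathcal{L}$ is $\{\lambda_m = 1 - m/2\}_{m\geq 0}$ with eigenfunctions given by the modified Hermite polynomials $h_m$: modes $m\leq 1$ are unstable, $m=2$ is neutral, and $m\geq 3$ is stable with decay rate $e^{(2-m)\tau/2}$. The three alternatives of the theorem correspond respectively to super-exponential decay of every mode, to the neutral mode dominating, and to a single stable mode dominating.

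Before any spectral decomposition is meaningful, one must build a quantitative a priori framework. On compact $\sigma$-intervals the hypothesized smooth convergence to the cylinder gives $C^k$ smallness of $v$; the Type-I bound together with parabolic regularity propagates this smallness to a wider ``parabolic'' neighborhood of the equator and provides the weighted Sobolev control needed so that the coefficients $a_m(\tau) = \int v(\tau,\sigma) h_m(\sigma) e^{-\sigma^2/4}\,d\sigma$ are well-defined and so that the nonlinear remainders can be absorbed in terms of $\sum |a_m|^2$. Reflection symmetry forces $v$ to be even in $\sigma$, hence $a_m\equiv 0$ for all odd $m$. The strongly unstable mode $a_0$ corresponds to a uniform shift of $u$; the requirement $u(\tau,\sigma)\to 1$ on compact sets combined with the $e^{\tau}$ instability of this eigenspace forces $a_0$ to vanish at leading order (any sustained contribution would contradict the convergence to the unit cylinder).

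With odd modes and $a_0$ eliminated, projecting the equation onto the remaining eigenspaces yields an infinite ODE system
$$\dot a_m(\tau) = \lambda_m a_m(\tau) + \mathcal{N}_m(\{a_k\}),$$
to which one applies Merle--Zaag to obtain one of three mutually exclusive alternatives. If the neutral coefficient $a_2$ dominates, the self-interaction $h_2\cdot h_2$ resonates onto $h_2$ (modulo higher Hermite polynomials), producing a Riccati-type ODE $\dot a_2 = -c a_2^2 + o(a_2^2)$ with an explicit positive $c$; tracking the constant $c$ through the quadratic terms of $\mathcal{N}$ gives precisely the factor $1/8$ and reproduces Case 2. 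If instead a single stable coefficient $a_m$ ($m\geq 4$ even, though the theorem states the weaker $m\geq 3$) dominates, then $a_m(\tau)=C e^{(2-m)\tau/2}(1+o(1))$ with all other projections negligible, yielding Case 3. In the residual scenario every $a_m$ decays faster than $e^{-A\tau}$ for all $A$; parabolic interior regularity then upgrades this weighted $L^2$ decay to the pointwise bound, and the extension from compact sets to $|\sigma|\leq A\sqrt\tau$ in Case 1 is achieved via weighted energy estimates, exploiting that $e^{-\sigma^2/4}$ dominates any polynomial weight throughout that region.

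The main obstacle I expect is the first step: establishing the weighted a priori bounds on $v$ on an expanding $\sigma$-region using only the global Type-I hypothesis and the cylindrical convergence on compact sets. This is what makes the spectral framework rigorous and what quantitatively controls the infinite tail $\sum_{m\geq M}|a_m|^2$ that feeds into the nonlinear remainders. A secondary, but delicate, point is identifying the resonance constant in the $a_2$ equation precisely enough to produce the coefficient $1/8$ of Case 2, which requires a careful bookkeeping of the quadratic terms and of the geometric coordinate change $s\mapsto\sigma$ under the rescaling.
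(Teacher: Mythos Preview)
Your high-level framework---linearize around $u\equiv 1$, expand in Hermite eigenfunctions of $\mathcal{L}$, and run a Merle--Zaag dichotomy---is exactly the strategy the paper adopts (following Filippas--Kohn). However, there are two genuine technical gaps in your sketch that the paper has to work hard to close, and one of them is a real obstruction to carrying out the argument directly on $v=u-1$.

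First, you omit the \emph{nonlocal} drift term. In commuting $(\tau,\sigma)$ variables the equation for $u$ (hence for $v$) contains $-nJ u_\sigma$ with $J(\tau,\sigma)=\int_0^\sigma \tfrac{u_{\sigma\sigma}}{u}\,d\sigma$; this is not a local quadratic in $(v,v_\sigma,v_{\sigma\sigma})$ and does not fit into the $\mathcal{N}$ you wrote down. The paper explicitly says the Filippas--Kohn scheme ``is not directly applicable to $u-1$ or $U$'' for this reason, and instead works with $f=U_\sigma=u_\sigma/u$, whose equation has the more tractable nonlocal term $-n(\int_0^\sigma f^2)f_\sigma$. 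Controlling this still requires an auxiliary weighted quartic quantity $I^2=\int f^4\theta^4\sigma^k\rho$ and a differential inequality for $I$; only after absorbing $I$ into the ``stable'' component does the Merle--Zaag system close. Passing to $f$ costs the constant mode of $U$, which must then be recovered separately (your claim that $a_0$ is automatically killed by convergence is too quick; the paper needs a dedicated ODE argument, Proposition~\ref{nocstforu}, to show $\langle U\eta,h_0\rangle$ is dominated by $\|f\eta\|$).

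Second, the localization step is more delicate than you indicate. One must cut off at scale $|\sigma|\le A\sqrt\tau$, and the resulting error terms are only $O(e^{-cA^2\tau})$, not $o(\|v\|)$ a priori. The paper therefore proves a \emph{modified} Merle--Zaag lemma that tolerates an additive $Be^{-b\tau}$ in each inequality, and separately shows that either $\|f\eta\|$ decays faster than every exponential (Case~1) or is bounded below by $e^{-\Lambda\tau}$ for some fixed $\Lambda$, after which $A$ can be chosen large relative to $\Lambda$. Your sketch assumes the standard lemma suffices. You are right that the apriori control on an expanding region is the principal analytic input; the paper obtains it not from parabolic regularity alone but from a combination of polar-cap analysis (ruling out singularities at the tips via Type-I blowup classification), Hamilton--Ivey pinching for $u_{\sigma\sigma}$, and an explicit barrier argument for $u_\sigma$ in the variable $u$.
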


\subsection{Remarks About Assumptions}

Some remarks about the assumptions in our main Theorem are in order. The reflection symmetry assumption is for controlling the location of the singularity. Without reflection symmetry, we have two natural choices: We can rescale at a neck and choose $\sigma=0$ to correspond to the time dependent location of a minimum $\xi(t)$, or we can rescale at a fixed point $p$. The problem with both these choices is the lack of information about how the necks are moving. The first choice gives us terms coming from $\xi'(t)$ that we cannot control. With the second choice, the region with good asymptotic might be too far from the origin once we rescale. Considering the examples in \cite{degenneck}, this will happen if we pick the wrong point.

The Type-I assumption plays two roles in our analysis. On one hand, it is important  for convergence to the cylinder. However, it is reasonable to expect that a local Type-I bound should suffice for establishing the convergence. On the other hand, once we assume the convergence, our work does not use the Type-I condition in any essential way. What we actually need is a lower diameter bound on the part of $M$ that is away from the tips.

Considering the results in \cite{shrinkerlimit} and Kotschwar's classification in \cite{classkot}, assuming cylindrical neckpinches is not too restrictive. Note, however, that assuming the common definition of non-degenerate neckpinch singularity, which says the smooth Gromov-Hausdorff limit of parabolically rescaled metric around every sequence of space-time points is a cylinder, does not automatically imply $u \to 1$. This is because the functions $\psi$ and $u$ are not geometric quantities. Ideally this should be proven only using the PDE structure, similar to classic blow up results for heat equation. We expect the results of \cite{rotsym} to be useful.

The first case in our theorem is unlikely considering the non-degeneracy result of Lax \cite{laxnondegen}. Similar situations have been dealt with by Sesum \cite{expdecayse} and Kotschwar \cite{expdecaykot}. In \cite{expdecaykot}, the argument depends heavily on Lojaseiwicz inequality satisfied by Perelman's $\mu$ functional near compact solitons, which was proven in \cite{lojas} by Sun and Wang. Such an inequality is shown for cylinders in mean curvature flow by Colding and Minicozzi, but it is not yet available for Ricci flow.

\subsection{Outline of The Proof}\label{outline}

We now discuss the precise setup of the problem and explain the main outline of the proof. We follow the conventions and notations in \cite{precise}, and review some of the computations presented in \cite{neckpinch} and \cite{precise} without proof.

If $g(t)$ is a rotationally symmetric Ricci flow, the radius function $\psi$ and the function $\varphi$ satisfy:
\begin{gather}
    \psi_t = \psi_{ss}-(n-1)\frac{1-\psi_s^2}{\psi} \\
    \varphi_t = n \frac{\psi_{ss}}{\psi} \varphi \nonumber
\end{gather}
These can be derived using the equation for $g(t)$ once we note
\begin{gather*}
    Ric = -n\frac{\psi_{ss}}{\psi} \, ds^2 + \big( -\psi \psi_{ss} -(n-1)\psi_s^2 + (n-1)\big) g_{can}
\end{gather*}
Note that the variables $s$ and $t$ do not commute. Their commutator is given by
\begin{gather*}
    [\frac{\partial}{\partial t} , \frac{\partial}{\partial s}] = -n\frac{\psi_{ss}}{\psi} \frac{\partial}{\partial s}
\end{gather*}

We define the blown-up radius and the self-similar variables as 
\begin{gather}
    u(\tau, \sigma)=\frac{\psi(t,s)}{\sqrt{2(n-1)(T-t)}}\\
    \sigma=\frac{s}{\sqrt{T-t}} \nonumber \\
    \tau= - \log(T-t) \nonumber
\end{gather}
These variables satisfy
\begin{gather*}
    [\frac{\partial}{\partial \tau} , \frac{\partial}{\partial \sigma}] = -\left( \frac{1}{2} + n\frac{u_{\sigma\sigma}}{u} \right) \frac{\partial}{\partial \sigma}
\end{gather*}
The blown up radius satisfies the following equation in non-commuting variables
\begin{gather}
u_\tau=u_{\sigma \sigma} + \frac{1}{2}\left( u-\frac{1}{u}\right) + (n-1) \frac{u_\sigma^2}{u}
\end{gather}
We can compute the equation for $u$ in commuting variables by adding an integral term $J$ .
\begin{lem}\label{ueqcom}
Assuming reflection symmetry for all $t$ close enough to $T$, the equation for $u$ in commuting variables is
\begin{gather}
    u_\tau=u_{\sigma \sigma} -\frac{\sigma}{2}u_\sigma - nJ(\tau, \sigma) u_\sigma + \frac{1}{2}\left( u-\frac{1}{u}\right) + (n-1) \frac{u_\sigma^2}{u}\\
    J(\tau, \sigma)= \int_0^\sigma \frac{ u_{\sigma \sigma}}{u}\,d\sigma = \frac{u_\sigma}{u}(\tau, \sigma) +\int_0^\sigma \left(\frac{u_\sigma}{u} \right)^2\,d\sigma
\end{gather}
\end{lem}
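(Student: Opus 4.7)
The plan is to convert the non-commuting PDE for $u$ displayed just above the lemma into commuting coordinates by tracking the difference between $\partial_\tau$ at fixed $x$ and $\partial_\tau$ at fixed $\sigma$. By the chain rule,
$$\partial_\tau|_x u = \partial_\tau|_\sigma u + (\partial_\tau \sigma)|_x \cdot u_\sigma,$$
so the commuting equation is obtained from the non-commuting one by subtracting $(\partial_\tau\sigma)|_x \cdot u_\sigma$ from the time-derivative side. The entire task reduces to identifying $(\partial_\tau\sigma)|_x$ with $\sigma/2 + nJ$.

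To compute $(\partial_\tau\sigma)|_x$, first I would write $\sigma = s\,e^{\tau/2}$, so that $(\partial_\tau\sigma)|_x = \sigma/2 + e^{\tau/2}(\partial_\tau s)|_x$. Since $\partial_\tau = (T-t)\partial_t$ and $s = \int_0^x \varphi(t,r)\,dr$, the evolution $\varphi_t = n(\psi_{ss}/\psi)\varphi$ together with the change of variable $ds' = \varphi\,dr$ gives
$$(\partial_\tau s)|_x = n(T-t)\int_0^s \frac{\psi_{ss}}{\psi}\,ds'.$$
Translating into self-similar variables, a direct computation gives $\psi_{ss}/\psi = u_{\sigma\sigma}/((T-t)u)$ and $ds' = \sqrt{T-t}\,d\sigma'$, hence
$$(\partial_\tau s)|_x = n\sqrt{T-t}\int_0^\sigma \frac{u_{\sigma\sigma}}{u}\,d\sigma' = n\sqrt{T-t}\,J(\tau, \sigma).$$
Multiplying by $e^{\tau/2} = (T-t)^{-1/2}$ and adding $\sigma/2$ yields $(\partial_\tau\sigma)|_x = \sigma/2 + nJ$, which upon substitution into the non-commuting PDE produces the claimed equation.

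For the alternative formula for $J$, I would integrate by parts using the identity $(u_\sigma/u)_\sigma = u_{\sigma\sigma}/u - (u_\sigma/u)^2$, which gives
$$J(\tau, \sigma) = \frac{u_\sigma}{u}(\tau,\sigma) - \frac{u_\sigma}{u}(\tau,0) + \int_0^\sigma \left(\frac{u_\sigma}{u}\right)^2 d\sigma'.$$
Reflection symmetry enters exactly here: since $u(\tau,\cdot)$ is even, $u_\sigma(\tau,0)=0$ and the boundary term at $0$ drops out. The only real obstacle in the whole argument is bookkeeping the various powers of $T-t$ when translating between the $(\psi, s, t)$ and $(u, \sigma, \tau)$ coordinate systems; once the conversion rules for $\partial_s$, $\psi$, and $\psi_{ss}$ are pinned down, everything falls out cleanly.
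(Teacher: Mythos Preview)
Your proof is correct and is essentially the approach the paper has in mind: the paper records the commutator $[\partial_\tau,\partial_\sigma]=-(\tfrac12+n\,u_{\sigma\sigma}/u)\partial_\sigma$ just before the lemma and then simply states the result, so your direct computation of $(\partial_\tau\sigma)\big|_x=\tfrac{\sigma}{2}+nJ$ via $\varphi_t=n(\psi_{ss}/\psi)\varphi$ is exactly the integration of that commutator from $\sigma=0$, with reflection symmetry supplying the vanishing constant of integration. The integration-by-parts identity for $J$ and the use of $u_\sigma(\tau,0)=0$ are likewise the intended justification for the second displayed formula.
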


We will work with the quantities
\begin{gather}
    U = \log u \qquad f= U_\sigma = \frac{u_\sigma}{u}
\end{gather}
The following evolution equations are straightforward to derive.
\begin{lem} \label{ufeq}
    The quantities $U=\log u$ and $f=\frac{u_\sigma}{u}$ satisfy
    \begin{gather}
    U_\tau= \mathcal{L}U + \frac{1}{2}\left( 1-e^{-2U}-2U\right) - n\left( \int_0^\sigma f^2 \right) f\\
    f_\tau= \mathcal{A}f + \left( \frac{1}{u^2}-1 \right)f -nf^3 -n\left( \int_0^\sigma f^2  \right)f_\sigma 
    \end{gather}
    where the operators $\mathcal{L}$ and $\mathcal{A}$ are defined as
\begin{gather}
    \mathcal{L} = \frac{\partial^2}{\partial \sigma^2} - \frac{\sigma}{2} \frac{\partial}{\partial \sigma} + 1\\
    \mathcal{A} = \mathcal{L} - \frac{1}{2}=\frac{\partial^2}{\partial \sigma^2} - \frac{\sigma}{2} \frac{\partial}{\partial \sigma} + \frac{1}{2}
\end{gather}
\end{lem}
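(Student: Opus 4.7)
The proof is a direct computation starting from the equation for $u$ in commuting variables (Lemma \ref{ueqcom}), so both derivatives $\partial_\tau$ and $\partial_\sigma$ commute throughout.

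\textbf{Step 1: Equation for $U$.} Divide the evolution equation for $u$ by $u$ to get $U_\tau = u_\tau/u$ on the left. On the right, use the two identities
\begin{gather*}
\frac{u_{\sigma\sigma}}{u} = U_{\sigma\sigma} + U_\sigma^2 = U_{\sigma\sigma} + f^2, \qquad \frac{u_\sigma}{u} = f, \qquad \frac{1}{u^2} = e^{-2U}.
\end{gather*}
This gives
\begin{gather*}
U_\tau = U_{\sigma\sigma} + f^2 - \frac{\sigma}{2}f - nJf + \frac{1}{2}\bigl(1 - e^{-2U}\bigr) + (n-1)f^2.
\end{gather*}
Now plug in the explicit formula $J = f + \int_0^\sigma f^2\,d\sigma$ from Lemma \ref{ueqcom} (here reflection symmetry is used to eliminate the boundary term $f(\tau,0)$). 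The terms $nf^2$ coming from $U_{\sigma\sigma}$-expansion and $(n-1)f^2$ combine with $-nJf = -nf^2 - nf\int_0^\sigma f^2$ to leave only the nonlocal term $-nf\int_0^\sigma f^2$. Rewriting $U_{\sigma\sigma} - \tfrac{\sigma}{2}U_\sigma = \mathcal{L}U - U$ and collecting the purely local terms into $\tfrac{1}{2}(1 - e^{-2U} - 2U)$ yields the claimed equation for $U$.

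\textbf{Step 2: Equation for $f$.} Since we are in commuting variables, $f_\tau = (U_\sigma)_\tau = (U_\tau)_\sigma$, so we just differentiate the equation for $U$ in $\sigma$. A short computation gives $\partial_\sigma(\mathcal{L}U) = \mathcal{L}f - \tfrac{1}{2}f = \mathcal{A}f$. The pointwise nonlinearity differentiates as
\begin{gather*}
\partial_\sigma\!\left[\tfrac{1}{2}(1 - e^{-2U} - 2U)\right] = (e^{-2U} - 1)f = \bigl(\tfrac{1}{u^2} - 1\bigr)f,
\end{gather*}
and the nonlocal term contributes
\begin{gather*}
\partial_\sigma\!\left[-nf\int_0^\sigma f^2\right] = -nf_\sigma \int_0^\sigma f^2 - nf^3,
\end{gather*}
by the product rule and the fundamental theorem of calculus. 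Summing these gives exactly the stated equation for $f$.

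\textbf{Anticipated difficulty.} There is no genuine obstacle: the proof is bookkeeping. The only non-obvious moment is the cancellation in Step 1 that eliminates all $f^2$-terms and leaves only the nonlocal $-nf\int_0^\sigma f^2$; this is precisely the cancellation that makes the formula for $J$ from Lemma \ref{ueqcom} useful, and it relies on reflection symmetry so that $f(\tau,0)=0$. Everything else is direct differentiation, so care is needed only to track signs and the coefficient $\tfrac{1}{2}$ from $\mathcal{L}-\mathcal{A}$.
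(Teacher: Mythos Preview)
Your proof is correct and follows the same route as the paper: derive the equation for $U$ from Lemma \ref{ueqcom} by dividing by $u$ and observing the cancellation of the $f^2$ terms, then obtain the equation for $f$ by differentiating in $\sigma$. The paper in fact writes less than you do---it simply says the $U$ equation is ``easy to derive'' and then carries out exactly your Step~2 computation.
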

\begin{proof}
    The equation for $U$ is easy to derive. For the equation for $f=U_\sigma$, we can differentiate the equation for $U$ to get (not that there are no commutator terms)
    \begin{align*}
        & f_\tau = \mathcal{L}U_\sigma - \frac{1}{2}U_\sigma + \frac{1}{2}\left( 2e^{-2U}-2\right)U_\sigma - nf^3 -n\left( \int_0^\sigma f^2 \right) f_\sigma \\
        & = (\mathcal{L} - \frac{1}{2})f + \left( \frac{1}{u^2}-1 \right)f - nf^3 -n\left( \int_0^\sigma f^2 \right) f_\sigma
    \end{align*}
\end{proof}
When $u$ is $C^2$-close to $1$, we have $U\approx u-1$ and we expect the non-linearities to be small of quadratic and cubic orders.
In asymptotic analysis of ancient solutions such as \cite{brendlebarrier} or \cite{compactancient3d}, barriers related to Bryant soliton play an important role. These barriers are singular near the tip and have controlled boundary conditions near the center and near the tip. In our case, we have candidate barriers from \cite{degenneck}. However, we are dealing with a forward equation and controlling the boundary conditions for the solution needs somewhat sharp estimates which we do not have. We therefore choose to follow ideas of Filippas and Kohn from \cite{filipkohn} instead, which we now briefly describe.

Consider the equation
\begin{gather*}
    q_\tau = \mathcal{L}q + q^2 \qquad q \to 0 \,\,\text{on compact sets}
\end{gather*}
The operator $\mathcal{L}$ is self-adjoint on the weighted Hilbert space $$\mathfrak{H}=L^2(\rho\, d\sigma) \qquad \rho(\sigma) = \exp(\frac{-\sigma^2}{4})$$
with eigenvalues $$-\mu_m=1-\frac{m}{2} \qquad m=0,1,2, \dots$$
Since $q \to 0$, we expect the projections of $q$ on neutral and negative eigenspaces to dominate the projections on positive eigenspaces. Define $x(\tau), y(\tau), z(\tau)$ to be the norms of projections of $q$ on unstable, neutral and stable eigenspaces of $\mathcal{L}$ respectively. These quantities satisfy
\begin{gather*}
    \frac{dx}{d\tau}  \geq \frac{1}{2}x - \| q^2 \| \\
    |\frac{dy}{d\tau} | \leq  \| q^2 \| \\
    \frac{dz}{d\tau}  \leq -\frac{1}{2}z + \| q^2 \|
\end{gather*}

In order to control $\|q^2\|$, note that if $q\to 0$ on compact sets, for any $k$ and any $\varepsilon$ and $\delta$ we eventually have
$$
\int q^4 \rho = \int_{|\sigma|\leq \delta^{-1}} q^4 \rho+ \int_{|\sigma|> \delta^{-1}} q^4 \rho \leq \varepsilon^2 \int q^2 \rho + \delta^k \int q^4 \sigma^k \rho
$$
It can be shown that for large $k$, for any $\varepsilon$ the quantity $I^2= \int q^4 \sigma^k \rho$ eventually satisfies the inequality
$$
\frac{d}{d\tau} I \leq -\frac{1}{2} I + \varepsilon \| q \|
$$

Therefore by replacing $z$ with $\zeta= z+ I$ we get the system of inequalities
\begin{gather}\label{odesys}
    \frac{dx}{d\tau}  \geq \frac{1}{2}x - \varepsilon(x+y+\zeta)  \nonumber \\
    |\frac{dy}{d\tau} | \leq  \varepsilon(x+y+\zeta) \\
    \frac{d \zeta}{d\tau}  \leq -\frac{1}{2}\zeta + \varepsilon(x+y+\zeta) \nonumber
\end{gather}
and we can apply the ODE lemma of Merle and Zaag \cite{merlezaag} to conclude $x+\zeta = o(y)$ or $x+y=o(\zeta)$.

The strategy described above is not directly applicable to $u-1$ or $U$ for multiple reasons. First, the non-linear terms are non-local and depend on the first order term $f=\frac{u_\sigma}{u}$. For this reason, we choose to work with $f$
instead. This basically allows us to control the non-local term with only weak $C^2$ control of $u$. However, this comes at the cost of losing information about the first mode of $U \approx u-1$ corresponding to eigenvalue $1$ (this can be thought of as a constant which is killed with differentiation). We therefore need to control this mode separately.

Another major complication is the error terms coming from localizing the quantities with a cut-off function. We expect the convergence to cylinder to be slower than $\exp(-\Lambda \tau)$ for some $\Lambda \gg 1$. Therefore, due the presence of Gaussian weight, it is natural to take a cut-off with a support of the size $A\sqrt{\tau}$. To deal with such error terms, we use a technical modification of the well known Merle-Zaag ODE lemma, Lemma \ref{myMZ}. Among other things, we show that the lower bound $\exp(-\Lambda \tau)$ on a sequence of times implies the same bound for all times. Therefore, by choosing $A$ in the cut-off large compared to $\Lambda$, we can guarantee negligible errors. However, for all of this to make sense, we need to show that the diameter of the rescaled metric is larger than $A\sqrt{\tau}$ for any $A$. This is done using the Type-I condition, essentially by showing the existence of some non-singular region on either side of the cylindrical part. 

In section \ref{apriori}, we derive some apriori estimates for $u$. First, we use the Type-I condition to show that the two tips cannot become singular. This will show that the tip regions move away exponentially fast. In particular, the distance between the neckpinch and the tip regions will be greater than $A\sqrt{\tau}$ for all $A$, and we have $u \geq \frac{1}{2}$ on $|\sigma|=O(\sqrt{\tau})$. Then, we combine this lower bound, the Hamilton-Ivey pinching estimate proven by Zhang in \cite{hamivy} and a barrier argument to derive $C^2$ bounds. As discussed below, Sturmian theory for parabolic equations allows us to assume a fixed number of necks and bumps, and we essentially have singularities only at the points with $\psi \to 0$. The barrier argument is good for the region between a neck and a bump that are both becoming singular, and the Hamilton-Ivey estimate is useful for the region between the last singular neck and the bump next to it.

In section \ref{asympt}, we derive the necessary estimates to imitate the Filippas-Kohn approach for $f$. We then state and apply our modified ODE lemma to prove that exactly one of the following scenarios happen: The neutral eigenfunction of $\mathcal{L}$ controls $f$, or $\| f\|$ converges to zero exponentially, or $\| f \|$ decays faster than any exponential. We also show how to control the remaining mode of $u-1$ using $f$, and present the proof of our main Theorem.

In sections \ref{neutral} and \ref{exp}, we deal with the neutral and exponential cases and derive the ODE for the coefficient of the dominant mode. This will allow us to prove the asymptotic behavior of $u-1$ on compact sets described in the remaining two cases of Theorem \ref{main}.

\section{Apriori Estimates}\label{apriori}

The goal of this section is to prove the following apriori bounds: For $|\sigma|\leq 4A \sqrt{\tau}$ and $\tau \geq \tau_0$ we have 
\begin{gather}
    1-\frac{C_0}{\tau} \leq u \leq C_A\\
    |u_\sigma| \leq \frac{C_A}{\sqrt{\tau}} 
\end{gather}
Note that by the asymptotic analysis in \cite{precise}, these bounds are sharp.

\subsection{Analysis of Polar Caps}

We first show that the poles cannot be singular points themselves.

\begin{prop}
Let $p$ denote one of the poles of $M$. For any sequence of times $t_i \to T$, let $M_i=M$, $g_i(t) = \frac{1}{T-t_i} g(t_i + t(T-t_i))$. Then the pointed sequence $(M_i, g_i(0), p)$ has a subsequence converging to the flat space.
\end{prop}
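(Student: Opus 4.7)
The plan is to extract a subsequential pointed Cheeger--Gromov limit $(M_\infty, g_\infty, p_\infty)$, identify it as a rotationally symmetric gradient shrinking soliton with an $O(n+1)$-fixed point at $p_\infty$, and rule out all nonflat possibilities using the cylindrical neckpinch hypothesis of Theorem~\ref{main}.

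First I would use the Type-I bound, which gives $|Rm[g_i(0)]| \leq C$ uniformly in $i$ and, more generally, uniform curvature control on a backward time interval for the rescaled flows $g_i(t)$. Combined with Perelman's no-local-collapsing (yielding a lower bound on $\mathrm{inj}_{g_i(0)}(p)$), Hamilton's compactness theorem produces a subsequential smooth pointed limit Ricci flow $(M_\infty, g_\infty(t), p_\infty)$ on $t \in (-\infty, 0]$. By the Type-I blow-up analysis of Enders--M\"uller--Topping and Naber, this limit is a gradient shrinking Ricci soliton, and by compactness of $O(n+1)$ the rotational symmetry (which fixes $p$ on every $M_i$) passes, along a further subsequence, to an isometric $O(n+1)$-action on $(M_\infty, g_\infty)$ fixing $p_\infty$. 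The classification of rotationally symmetric shrinkers cited in the introduction then forces $(M_\infty, g_\infty)$ to be $\mathbb{R}^{n+1}$, $\mathbb{S}^n \times \mathbb{R}$, or $\mathbb{S}^{n+1}$, and the cylinder is ruled out immediately since it admits no $O(n+1)$-fixed point.

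To rule out the sphere, I would invoke the hypothesis $u(\tau,\sigma) \to 1$ uniformly on $|\sigma|\leq R$ for every $R > 0$: this forces the rescaled half-distance from the equator to each pole, $L(t_i)/\sqrt{T-t_i}$, to diverge, since otherwise the rescaled manifold would have bounded $\sigma$-extent and the convergence on $|\sigma|\leq R$ could not hold for large $R$. On the other hand, if $(M_\infty, g_\infty)$ were a round sphere of radius $R_0$, the limiting $O(n+1)$-action would admit a second fixed point $q_\infty$ at distance $\pi R_0$ from $p_\infty$. Tracing $q_\infty$ through the approximately equivariant Cheeger--Gromov embeddings, it must correspond along a subsequence to the only other $O(n+1)$-fixed point of $M_i$, namely the opposite pole $p_S$. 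This yields $d_{g_i(0)}(p, p_S) = 2L(t_i)/\sqrt{T-t_i} \to \pi R_0$, contradicting divergence. Hence the limit is flat, and the only complete flat rotationally symmetric manifold with an $O(n+1)$-fixed point is $\mathbb{R}^{n+1}$.

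The main technical subtlety lies in transferring the $O(n+1)$-action to the limit in a way compatible with tracing its fixed points back to the approximating manifolds; this relies on the compactness of $O(n+1)$ (giving equicontinuous limits of isometries under the CG embeddings) and on the rigidity of the $O(n+1)$-fixed locus of each $M_i$, which consists of exactly the two isolated poles, so that any sequence of approximate fixed points must cluster at one of them.
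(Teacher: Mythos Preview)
Your proof is correct and follows the same three-step skeleton as the paper's: extract a shrinker limit via the Type-I hypothesis, reduce to the list $\{\mathbb{R}^{n+1},\ \mathbb{S}^n\times\mathbb{R},\ \mathbb{S}^{n+1}\}$, then exclude the two nonflat cases. The implementations differ mildly at each step. For the classification, the paper invokes the vanishing-Weyl result directly, whereas you pass the $O(n+1)$-action to the limit and quote Kotschwar; both land on the same list, but yours carries the extra burden of making the equivariance of the Cheeger--Gromov convergence precise. For the cylinder, the paper notes that $p$ is an isotropic-curvature point while the cylinder has none; your fixed-point argument is equivalent once the action has been passed. For the sphere, the paper runs a geodesic argument (a great-circle limit forces $\gamma_i$ to reach the opposite pole within bounded length, which the intervening neck forbids); your route via $L(t_i)/\sqrt{T-t_i}\to\infty$ is cleaner, and in fact you can simplify it further: once the limit is compact, pointed convergence forces $\operatorname{diam}(M_i,g_i(0))$ to stay bounded, which already contradicts the diverging rescaled pole-to-pole distance, so the equivariant tracing of $q_\infty$ back to $p_S$ is unnecessary.
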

\begin{proof}
Since $T-t_i \to 0$, a subsequence (still indexed by $i$) converges to a gradient shrinking soliton $(M_\infty, g_\infty, p_\infty)$ by \cite{shrinkerlimit}. Since $(M, g(t))$ has vanishing Weyl tensor, so does $M_\infty$. The results of \cite{harmweyl} and \cite{rotsymshrink} show that $M_\infty$ is either the sphere $\mathbb{S}^{n+1}$, the cylinder $\mathbb{S}^n \times \mathbb{R}$, or the flat space $\mathbb{R}^{n+1}$.

\textit{The Cylinder Case}: This is the easiest case to exclude. We simply note that the point $p$ is an umbilical point for $(M, g_i(0))$, where as the cylinder $\mathbb{S}^n \times \mathbb{R}$ has no umbilical points.

\textit{The Sphere Case}: The idea is to note that on the geodesics of $M_i$ starting from $p$ we will eventually see the cylinder geometry, whereas on a geodesic of the sphere we always see positive curvatures. There is an exhaustion $\mathcal{U}_i$ of $\mathbb{S}^{n+1}$ and diffeomorphisms $\phi_i:\mathcal{U}_i \to M_i$ such that $\phi_i(p_\infty)=p$ and $\| g_\infty - \phi_i^* g_i(0)\|_{g_\infty} \to 0$.

Let $g_i$ denote $\phi_i^* g_i(0)$ for short, and let us identify $p$ and $p_\infty$ and work on the sphere. Consider the $g_i$-geodesic $\gamma_i$ starting at $p$ with $g_i$-unit tangent vector $V_i$. Since $g_i \to g_\infty$ on $T_p \mathbb{S}^{n+1}$, there is a subsequence of $V_i$ that converges to a $g_\infty$-unit vector $V$ with respect to $g_\infty$. Let $\gamma$ be the $g_\infty$-geodesic starting at $p$ and tangent to $V$ (so it is a great circle). We may take a neighborhood $\mathcal{U}$ of $\gamma$ and assume $\mathcal{U} \subseteq \mathcal{U}_i$.

Parametrize $\gamma_i$ by the $g_i$-length and $\gamma$ by $g_\infty$-length. For any fixed $D$, $\gamma_i(r)$ uniformly converges to $\gamma(r)$ with respect to $g_\infty$ for $0 \leq r \leq D$. Since $\gamma$ will pass through $p$ again for the first time after a fixed length $D$, we conclude $\gamma_i(D)$ converges to $p$ with respect to $g_\infty$ and therefore with respect to $g_i$. We may also assume that $\gamma_i(r)$ lies in $\mathcal{U}$ for $0\leq r\leq D$ and that there is a point $q_i$ on $\gamma_i$ which is at least $D/4$ away with respect to $g_i$.

Now if we pull $\gamma_i$ back to $M_i$ with $\phi_i$ (which is possible sine $\gamma_i \subset \mathcal{U}_i$), we see that $\gamma_i$ is a geodesic on $M_i$ that starts at $p$, has points at least $D/4$ distance away from $p$, and is such that $\gamma_i(D)$ is arbitrary close to $p$. Since $M_i$ is rotationally symmetric with a pole at $p$, the only way this is possible is for $\gamma_i$ to pass the other pole as well. But this is not possible if there is a neck-like region between the two poles.
\end{proof}

Now we show that an almost flat rotationally symmetric manifold must have an almost linear profile function.

\begin{lem}
Let $p$ be a pole of $M$ and let $s_1(x,t)$ denote the $g(t)$-distance of $x$ from the pole $p$. Consider a sequence of times $t_i\to T$ and define $g_i(t) = \frac{1}{T-t_i} g(t_i + t(T-t_i))$. If the sequence $(M, g_i(0), p)$ converges to the flat space $(\mathbb{R}^{n+1}, \delta, 0)$, the rescaled radii $u(\tau_i, \sigma_1)$ converge to $\frac{\sigma_1}{\sqrt{2(n-1)}}$ uniformly on compact sets as $i \to \infty$. Here $\sigma_1=\frac{s_1}{\sqrt{T-t}}$.
\end{lem}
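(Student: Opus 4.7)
The plan is to show $\psi_i(\sigma_1) := \psi(t_i, \sigma_1\sqrt{T-t_i})/\sqrt{T-t_i} \to \sigma_1$ uniformly on compact sets, from which the claim follows at once since $u(\tau_i, \sigma_1) = \psi_i(\sigma_1)/\sqrt{2(n-1)}$. In $g_i(0)$-geodesic polar coordinates around the pole $p$, the rescaled metric takes the warped product form $d\sigma_1^2 + \psi_i^2(\sigma_1)\, g_{can}$, with boundary data $\psi_i(0) = 0$ and $(\psi_i)_{\sigma_1}(0) = 1$ coming from the smooth closure of the metric at the pole.

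The smooth pointed Gromov-Hausdorff convergence $(M, g_i(0), p) \to (\mathbb{R}^{n+1}, \delta, 0)$ supplies, for any $R > 0$ and all large $i$, diffeomorphisms $\phi_i$ from $B_R(0) \subset \mathbb{R}^{n+1}$ into $M$ with $\phi_i(0) = p$ and $\phi_i^* g_i(0) \to \delta$ in $C^\infty_{\mathrm{loc}}$. In particular, the sectional curvatures of $g_i(0)$ are uniformly $C^k$-bounded on $g_i(0)$-balls around $p$ of any fixed radius, and they tend to zero. For our rotationally symmetric metric these curvatures are
$$K^{(i)}_{\mathrm{rad}} = -\frac{(\psi_i)_{\sigma_1\sigma_1}}{\psi_i}, \qquad K^{(i)}_{\mathrm{sph}} = \frac{1 - (\psi_i)_{\sigma_1}^2}{\psi_i^2},$$
so combined with the boundary data at the pole one obtains uniform $C^k$ bounds on $\psi_i$ over $[0,R]$ for every $k$. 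By Arzela-Ascoli, a subsequence converges in $C^\infty_{\mathrm{loc}}$ to a smooth limit $\psi_\infty$ satisfying $\psi_\infty(0) = 0$, $\psi_\infty'(0) = 1$, and $\psi_\infty'' \equiv 0$. Hence $\psi_\infty(\sigma_1) = \sigma_1$, and since the limit is uniquely determined the full sequence converges, giving the claim.

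The main technical point is the bridge between the abstract smooth Gromov-Hausdorff convergence (given only via an unknown sequence of diffeomorphisms $\phi_i$) and the intrinsic warping function $\psi_i$. The curvature argument above handles this invariantly, but one may alternatively exploit that $g_i(0)$ and $\delta$ are both $O(n+1)$-invariant with a unique fixed point to arrange $\phi_i$ to be $O(n+1)$-equivariant; such a $\phi_i$ is then forced into the form $\phi_i(r, \theta) = (\tilde s_i(r), \theta)$ in matching geodesic polar coordinates, so that $\phi_i^* g_i(0) = (\tilde s_i')^2 dr^2 + \psi_i^2(\tilde s_i(r))\, g_{can}$ and $C^\infty$ convergence of the pulled-back metrics to $dr^2 + r^2 g_{can}$ immediately yields $\tilde s_i(r) \to r$ and $\psi_i \to \mathrm{id}$ in $C^\infty_{\mathrm{loc}}$.
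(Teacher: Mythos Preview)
Your argument is correct and rests on the same key observation as the paper: sectional curvatures are diffeomorphism invariants, so smooth pointed convergence to $\mathbb{R}^{n+1}$ forces them to zero on balls of fixed rescaled radius, which in turn controls the warping function via the explicit formulas for $K_{\mathrm{rad}}$ and $K_{\mathrm{sph}}$.

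The route, however, differs. The paper uses only the spherical curvature $K_{\mathrm{sph}}=\frac{1-2(n-1)u_\sigma^2}{u^2}$ and runs a direct continuation/integration argument: from $|K_{\mathrm{sph}}|\leq\varepsilon$ and the pole data one gets two-sided bounds on $u_\sigma$ as long as $u\leq 2R$, integrates to sandwich $u$ between $(1\pm o(1))\,\sigma_1/\sqrt{2(n-1)}$, and closes the bootstrap by noting the upper bound stays below $2R$ for $\sigma_1\leq R$. No compactness is invoked. Your approach instead packages the same ODE control into uniform $C^k$ bounds on $\psi_i$, passes to a subsequential limit via Arzel\`a--Ascoli, identifies the limit by the ODE $\psi_\infty''=0$ with $\psi_\infty(0)=0$, $\psi_\infty'(0)=1$, and upgrades to full-sequence convergence by uniqueness. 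This is slightly less elementary but more robust (it would adapt to other limit profiles). Note that your claim ``one obtains uniform $C^k$ bounds on $\psi_i$ over $[0,R]$'' implicitly contains the same continuation step the paper carries out explicitly: an a priori $C^0$ bound on $\psi_i$ is needed before the curvature identities yield higher derivatives, and that $C^0$ bound is exactly the bootstrap $u\leq 2R\Rightarrow|u_\sigma|$ bounded $\Rightarrow u\leq 2R$ on $[0,R]$.
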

\begin{proof}
Fix a number $R$ and consider the sectional curvature of $(M, g_i(0))$ equal to $\frac{1-2(n-1)u_\sigma^2}{u^2}$. Since we have convergence to flat space, this quantity goes to zero uniformly on fixed $g_i$ distances from the pole $p$. If for some small $\varepsilon$ we have $|\frac{1-2(n-1)u_\sigma^2}{u^2}| \leq \varepsilon$, then as long as $u \leq 2R$ we get  $\frac{1-4\varepsilon R^2}{2(n-1)} \leq u_\sigma^2 \leq \frac{1+4\varepsilon R^2}{2(n-1)}$. This implies $$\sqrt{1-4\varepsilon R^2}\frac{\sigma_1}{\sqrt{2(n-1)}} \leq u \leq \sqrt{1+4\varepsilon R^2}\frac{\sigma_1}{\sqrt{2(n-1)}}$$
By taking $\varepsilon$ small depending on $R$, the right hand side will be strictly less than $2R$ if $\sigma_1 \leq R$ and we are done.
\end{proof}

\subsection{Estimates For $u$ and $u_{\sigma\sigma}$}

We first recall two simple lemmas proven in \cite{neckpinch}. Following \cite{neckpinch}, we consider the derivative of the radius and the scale invariant difference of sectional curvatures
\begin{gather*}
v=\psi_s\\
a= \psi \psi_{ss} - \psi_s^2+1
\end{gather*}
These quantities satisfy the equations
\begin{gather*}
    v_t= v_{ss}+ \frac{n-2}{\psi} v v_s + (n-1)\frac{(1-v^2)}{\psi^2} v\\
    a_t = a_{ss}+ (n-4) \frac{\psi_s}{\psi} a_s - 4(n-1)\frac{\psi_s^2}{\psi^2}a
\end{gather*}
\begin{lem}
We have the bounds
\begin{gather*}
    \sup |v(t,s)| \leq \max \{ 1, \sup |v(0,s)| \}\\
    \sup |a(t,s)| \leq \sup |a(0,s)|\\
    |\psi \psi_{ss}|\leq C\\
    |2\psi \psi_t| \leq C\\
    |Rm| \leq \frac{C}{\psi^2}
\end{gather*}
\end{lem}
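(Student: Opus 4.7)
The plan is to apply the parabolic maximum principle to the given evolution equations for $v$ and $a$, and then deduce the remaining three bounds algebraically.

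For $v = \psi_s$, I would consider $v^2$ and examine its spatial maximum at fixed time. At any interior maximum one has $v v_s = 0$, which kills the transport term $\frac{n-2}{\psi} v v_s$; the diffusion contributes the standard nonpositive quantity $2 v v_{ss} \leq -2 v_s^2 \leq 0$; and the reaction term contributes $2(n-1)\frac{(1-v^2) v^2}{\psi^2}$, which is $\leq 0$ precisely when $v^2 \geq 1$. Hence the supremum of $v^2$ cannot increase once it has reached $1$. At the poles, smoothness of the closed metric forces $\psi_s = \pm 1$, so the boundary values of $|v|$ are already $\leq 1$; combining the two, $\sup_s |v(t,s)| \leq \max\{1, \sup_s |v(0,s)|\}$. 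For $a$, the equation is linear in $a$ with nonpositive zeroth-order coefficient $-4(n-1)\psi_s^2/\psi^2$; at a spatial maximum of $a$ this forces $a_t \leq 0$, and applying the same argument to $-a$ gives $\sup_s |a(t,s)| \leq \sup_s |a(0,s)|$. That $a$ vanishes at the poles (since $\psi \to 0$, $\psi_s^2 \to 1$, and $\psi \psi_{ss} \to 0$ there) ensures no polar contribution can break this.

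The remaining three estimates are algebraic. The identity $\psi \psi_{ss} = a + \psi_s^2 - 1$ together with the bounds on $|a|$ and $|v|$ gives $|\psi \psi_{ss}| \leq C$. Substituting into the radius equation yields $\psi \psi_t = \psi \psi_{ss} - (n-1)(1 - \psi_s^2)$, which is therefore bounded. Finally, the two independent sectional curvatures of a rotationally symmetric warped product of this form are $-\psi_{ss}/\psi$ and $(1-\psi_s^2)/\psi^2$; since $|\psi \psi_{ss}| \leq C$ and $|1 - \psi_s^2| \leq |a| + |\psi \psi_{ss}| \leq C$, both sectional curvatures equal a uniformly bounded quantity divided by $\psi^2$, so $|Rm| \leq C/\psi^2$.

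The main technical obstacle is justifying the maximum principle despite the singular coefficients $1/\psi$ and $1/\psi^2$, which blow up at the poles. I would handle this by localizing to regions of the form $\{\psi \geq \psi_0 > 0\}$, where all coefficients are smooth and bounded and classical parabolic comparison applies directly. Combined with the smoothness of $g(t)$ for every $t<T$ and the explicit polar boundary values $|v| = 1$ and $a = 0$ identified above, any spatial extremum of $v^2$ or of $a$ is either attained inside such a region, where the maximum principle argument goes through, or is controlled by its polar value, which already lies within the claimed bound.
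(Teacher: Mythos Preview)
Your proof is correct and follows the same line as the paper: apply the maximum principle to the evolution equations for $v$ and $a$ to get the first two bounds, then deduce the remaining three algebraically. The paper's own proof is terser --- it cites \cite{neckpinch} (specifically Proposition~5.3 and Lemma~7.1 there) for the last two bounds rather than writing out the algebra --- whereas you supply the direct computations $\psi\psi_t = \psi\psi_{ss} - (n-1)(1-\psi_s^2)$ and the sectional-curvature formulas, and you also make explicit the polar boundary values and the localization away from the poles needed to justify the maximum principle despite the singular coefficients; but the underlying argument is the same.
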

\begin{proof}
Apply the maximum principle to equations for $v$ and $a$ for the first two bounds. They imply the third. The other two follow from Proposition 5.3 and Lemma 7.1 of \cite{neckpinch} since only the first two bounds are used in their proofs.
\end{proof}

According to Sturmian theory, for $0<t$ the number of roots of $v=\psi_s$ is finite and non-increasing. Therefore we may assume the number of necks and bumps is constant and their location defines well-defined non-intersecting curves. Our assumptions imply $v(t,0)=0$ and $\psi(t,0) \to 0$ as $t\to T$, so the rough shape of the singularity around $s=0$ is a number of consecutive necks and bumps (and the region between them and possibly the polar caps) having $\psi \to 0$. We can say a little more.

\begin{lem}
Let $\xi(t)$ be a local minimum or maximum of the radius and let $r(t) = \psi(\xi(t),t)$. Then $\lim_{t \to T} r(t)$ exists. If this limit is zero, then for a minimum we have
\begin{gather*}
    -\frac{n-1}{r(t)} \leq r'(t) \leq -\frac{n-1}{2r(t)} + \frac{C}{2n} r(t)\\
    (1-o(1)) \sqrt{2(n-1)(T-t)} \leq r(t) \leq \sqrt{2(n-1)(T-t)} \nonumber
\end{gather*}
and for a maximum we have
\begin{gather*}
    -\frac{C}{r(t)} \leq r'(t) \leq -\frac{n-1}{r(t)}\\
    \sqrt{2(n-1)(T-t)} \leq r(t) \leq K\sqrt{(T-t)} \nonumber
\end{gather*}
for $C$ and $K$ depending only on $g_0$ and $n$.

Therefore, any neck or bump with $r\to 0$ eventually satisfies $\frac{1}{2} \leq u \leq K$, and any such neck satisfies $u<1$ and $u \to 1$.
\end{lem}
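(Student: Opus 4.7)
The plan is to read the evolution equation for $\psi$ directly at the moving critical point $\xi(t)$. Since $\psi_s(\xi(t),t)=0$, the chain rule gives $r'(t)=\psi_t(\xi(t),t)$, and the equation for $\psi$ reduces at this point to
\begin{gather*}
r'(t) = \psi_{ss}(\xi(t),t) - \frac{n-1}{r(t)}.
\end{gather*}
The existence of $\lim_{t\to T}r(t)$ then follows immediately from the preceding lemma: $|2\psi\psi_t|\leq C$ translates into $|(r^2)'|\leq C$, so $r^2$ is Lipschitz in $t$ and hence convergent as $t\to T$, and so is $r=\sqrt{r^2}$.

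Suppose this limit is zero and $\xi(t)$ is a local minimum. Then $\psi_{ss}(\xi(t),t)\geq 0$ and the display above immediately gives $r'(t)\geq -\frac{n-1}{r(t)}$; integrating $(r^2)'\geq -2(n-1)$ from $t$ to $T$ with $r(T)=0$ yields $r(t)\leq \sqrt{2(n-1)(T-t)}$, so $u\leq 1$ at any such neck. For the upper bound on $r'$, I would use the uniform bound on $a=\psi\psi_{ss}-\psi_s^2+1$ together with the identity $\psi_{ss}=(a-1)/r$ at a critical point to write $r'(t)=(a-n)/r(t)$, and then argue that $a(\xi(t),t)\to 1$ as $r\to 0$. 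Once this holds, the estimate $r'(t)\leq -\frac{n-1}{2r(t)}+\frac{C}{2n}r(t)$ follows for $t$ sufficiently close to $T$.

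To obtain the sharp lower bound $r(t)\geq (1-o(1))\sqrt{2(n-1)(T-t)}$ I would use a blowup argument along an arbitrary sequence $t_i\to T$: the rescaled flows $\frac{1}{T-t_i}g(t_i+(T-t_i)\,\cdot\,)$ based at $\xi(t_i)$ subconverge, by the Type-I assumption and \cite{shrinkerlimit}, to a non-flat gradient shrinker, which by the Weyl-flat classification from \cite{harmweyl, rotsymshrink} invoked in the previous proposition must be the round cylinder of radius $\sqrt{2(n-1)}$ (the sphere and flat space being excluded because $r\to 0$ at an interior critical point with non-degenerate surrounding geometry). This forces $r(t_i)/\sqrt{T-t_i}\to\sqrt{2(n-1)}$, and subsequence-independence promotes this to $u(\xi(t),t)\to 1$, which is the desired sharp lower bound. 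The maximum case is entirely symmetric: $\psi_{ss}\leq 0$ gives $r'(t)\leq -\frac{n-1}{r(t)}$ and hence $r(t)\geq \sqrt{2(n-1)(T-t)}$, while $|rr'|=|\psi\psi_t|\leq C/2$ yields $r'(t)\geq -C/r(t)$ and $r(t)\leq K\sqrt{T-t}$, giving $1\leq u\leq K$.

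The main difficulty will be matching the exact cylinder constant $2(n-1)$ on the lower side for a pinching minimum, and equivalently the sharp coefficient $-\frac{n-1}{2r}$ in the upper bound on $r'$. A rough two-sided bound $r^2\sim(T-t)$ follows trivially from boundedness of $a$, but pinning down the constant requires either an independent proof that $a(\xi(t),t)\to 1$ at the pinch---which is delicate because the reaction term $-4(n-1)(\psi_s/\psi)^2 a$ in the evolution of $a$ degenerates precisely at $\xi$ where $\psi_s=0$---or, as I propose, a blowup-and-classification argument that feeds the cylinder scale back into the ODE for $r(t)$. The final claim, that \emph{every} pinching neck satisfies $u\to 1$, then follows because the classification applies at any critical point with $r\to 0$, not only at the equator where $u\to 1$ was hypothesized.
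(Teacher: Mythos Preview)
Your approach is essentially correct but differs from the paper's in two places worth noting. The paper simply cites Proposition~5.4, Lemma~6.1 and the subsequent Remark in \cite{neckpinch}, so the comparison is really between your argument and Angenent--Knopf's.

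First, the precise upper bound $r'\le -\tfrac{n-1}{2r}+\tfrac{C}{2n}r$ does not come from $a\to 1$; it drops out immediately from the scalar curvature lower bound $R\ge -C$ (maximum principle on $R_t=\Delta R+2|Ric|^2$). At a neck $R=\tfrac{n(n-1)}{r^2}-\tfrac{2n}{r}\psi_{ss}$, so $\psi_{ss}=\tfrac{n-1}{2r}-\tfrac{Rr}{2n}\le\tfrac{n-1}{2r}+\tfrac{Cr}{2n}$. This is the one-line argument in \cite{neckpinch} and avoids the degeneracy you flagged in the $a$-equation. Note however that integrating this only yields $r^2\ge (n-1)(T-t)(1-o(1))$, i.e.\ $u\ge\tfrac{1}{\sqrt 2}-o(1)$, not the sharp $u\to 1$; the two displayed lines in the lemma are separate claims.

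Second, for the sharp lower bound $(1-o(1))\sqrt{2(n-1)(T-t)}$ you invoke Type-I blowup and the shrinker classification, whereas \cite{neckpinch} obtains $\psi_{ss}(\xi,t)\cdot r\to 0$ by a direct parabolic argument that does not use Type-I compactness. Your route is valid under the standing Type-I hypothesis of Theorem~\ref{main}, but two points need tightening: (i) the convergence results you cite (\cite{shrinkerlimit}, \cite{EMTshrinker}) are stated for a \emph{fixed} basepoint, while $\xi(t_i)$ is moving---you would need Naber's version \cite{Nabershrinker} or an argument that the neck stays within bounded rescaled distance of a fixed singular point; (ii) ruling out the sphere limit requires knowing the diameter of the rescaled manifold tends to infinity, which is exactly what the paper establishes via the polar-cap analysis (Proposition preceding this lemma), so your argument is circular unless you invoke that proposition explicitly. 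The upshot: your proof works and is arguably more conceptual, but it imports heavier machinery and the Type-I assumption where \cite{neckpinch} gets by with maximum-principle estimates alone.
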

\begin{proof}
See Proposition 5.4 and Lemma 6.1 and the Remark after it in \cite{neckpinch}. We can use the bounds from previous Lemma in case of local maximum and say $\frac{-C}{r}\leq \psi_{ss}$.
\end{proof}

An almost linear profile function implies large values of rescaled radius next to the poles. We can show that such regions of large radius move away from the neck exponentially fast.
\begin{lem}\label{caps}
The $u$ value of the left and right-most bumps is bigger than $D \exp(\tau/2)$ for large $\tau$ where $D$ depends only on $g_0$.
\end{lem}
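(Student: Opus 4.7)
I would argue by contradiction. Suppose the leftmost bump, with position curve $\xi(t)$ and radius $r(t)=\psi(\xi(t),t)$, has $r(t)\to 0$ as $t\to T$. The previous Lemma then gives $u_{\mathrm{bump}}(t)\le K$ for some $K$ depending only on $g_0$ and all $t$ close enough to $T$. The first step is to upgrade this pointwise bound to a bound on the whole polar cap. By Sturmian theory, after some time the number of critical points of $\psi(\cdot,t)$ is fixed and their positions define disjoint smooth curves, so for $t$ close to $T$ the leftmost bump is a genuine non-degenerate local maximum. Since near the pole $\psi_s>0$ and there is no earlier critical point, $\psi(\cdot,t)$ is strictly increasing from the pole out to $\xi(t)$; consequently $u(\tau,\sigma_1)\le K$ throughout the polar cap, where $\sigma_1=s_1/\sqrt{T-t}$ is the rescaled distance from the pole.

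Next I would combine the first Proposition with the preceding linear-profile Lemma. For any sequence $t_i\to T$, the first Proposition produces a subsequence along which $(M,g_i(0),p)$ converges to $(\mathbb{R}^{n+1},\delta,0)$, and then the preceding Lemma gives $u(\tau_i,\sigma_1)\to \sigma_1/\sqrt{2(n-1)}$ smoothly on compact sets in $\sigma_1$. Let $\sigma_1^{(i)}$ denote the rescaled distance from $p$ to $\xi(t_i)$; pass to a sub-subsequence so that $\sigma_1^{(i)}\to \sigma_1^*\in[0,\infty]$. The bulk of the proof is then a dichotomy on $\sigma_1^*$.

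If $\sigma_1^*<\infty$, smooth convergence to the linear profile gives $u_{\sigma_1}(\tau_i,\sigma_1^{(i)})\to 1/\sqrt{2(n-1)}\neq 0$. But at any local maximum of $\psi$ we have $\psi_{s_1}=0$, hence $u_{\sigma_1}=0$ at the bump, a contradiction. If instead $\sigma_1^*=\infty$, choose any fixed $R>K\sqrt{2(n-1)}$. For $i$ large we have $R<\sigma_1^{(i)}$, so the point at rescaled distance $R$ from $p$ lies on the polar cap, where we already established $u\le K$. On the other hand, the linear profile forces $u(\tau_i,R)\to R/\sqrt{2(n-1)}>K$, again a contradiction.

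Therefore $r(t)$ cannot tend to $0$; combined with the monotonicity and existence of $\lim_{t\to T}r(t)$ from the previous Lemma, this limit is a positive number $L=L(g_0)$. For $\tau$ large, $r(t)\ge L/2$, and substituting into the definition of $u$ yields $u_{\mathrm{bump}}\ge D e^{\tau/2}$ with $D=L/(2\sqrt{2(n-1)})$, which depends only on $g_0$. The right-most bump is handled by the same argument (or by reflection symmetry). I expect the main obstacle to be the handling of the unbounded case $\sigma_1^*=\infty$: one must cleanly ensure that the point at any fixed rescaled distance $R$ actually lies in the polar cap where the $u\le K$ bound applies, which is precisely why the Sturmian step upgrading the pointwise bound at $\xi(t)$ to a global bound on the polar cap is essential.
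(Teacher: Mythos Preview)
Your proof is correct and follows essentially the same contradiction strategy as the paper: if the outermost bump had $\psi\to 0$ then $u\le K$ there, which is incompatible with the almost-linear profile forced near the pole by the flat blow-up. The paper compresses your dichotomy on $\sigma_1^*$ into the single phrase ``because of almost linear $u$, we know $u$ becomes arbitrarily large at the bumps,'' so your version is simply a fleshed-out reading of the same argument.
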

\begin{proof}
If $\psi \to 0$ at the last bump, then $u \leq K$ by the previous Lemma. However, because of almost linear $u$, we know $u$ becomes arbitrary large at the bumps. Hence $\lim_{t\to T} \psi > D > 0$ and therefore $u> D \exp(\tau/2)$.
\end{proof}

Now we can prove a lower bound for $u$.

\begin{lem}\label{ulowbound}
For $A>0$ there exist $\tau_0 = \tau_0(g_0, A)$ such that for $\tau\geq \tau_0$ and $|\sigma| \leq 4 A \sqrt{\tau}$ we have $u \geq \frac{1}{2}$.
\end{lem}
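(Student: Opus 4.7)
The plan is to combine Sturmian theory with the preceding lemmas to propagate the bound $u \geq 1/2$ outward from the equator, first through a finite chain of critical points and then into the polar cap. By Sturmian theory applied to $v=\psi_s$, the zeros of $\psi_s(\cdot, t)$ are eventually constant in number and ordering, so for $\tau$ large the critical points of $\sigma \mapsto u(\tau, \sigma)$ on the right half form a fixed list $0 = \sigma_0 < \sigma_1(\tau) < \cdots < \sigma_N(\tau)$, with a reflected list on the left. At each $\sigma_j$ the radius $\psi$ converges, and by the preceding lemma the ones with zero limit satisfy $u(\tau, \sigma_j) \in [1/2, K']$ for $\tau$ large, while the outermost bump $\sigma_N$, whose $\psi$-limit is positive, satisfies $u(\tau, \sigma_N) \geq D e^{\tau/2}$ by Lemma \ref{caps}. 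Since $u_\sigma = \psi_s/\sqrt{2(n-1)}$, the critical points of $u$ and $\psi$ coincide and $u$ is monotone on each subinterval $[\sigma_{j-1}, \sigma_j]$; combining with the endpoint bounds I get $u \geq 1/2$ on $[0, \sigma_N(\tau)]$.

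It then remains to push the bound into the polar cap $[\sigma_N(\tau), \sigma_{\text{pole}}(\tau)]$. On this region $u$ is monotone, running from $\geq De^{\tau/2}$ at the bump down to $0$ at the pole, so the equation $u = 1/2$ is achieved at exactly one point, which I locate in the rescaled distance-from-pole coordinate $\sigma_1$. First, I would estimate the $\sigma_1$-length of the cap below: integrating $|\psi_s| \leq 1$ outward from the pole gives $\psi \leq s_1$, hence the cap has $\sigma_1$-size at least $\psi(\sigma_N)/\sqrt{T-t} = u(\tau, \sigma_N)\sqrt{2(n-1)} \geq D\sqrt{2(n-1)}\,e^{\tau/2}$. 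Second, a standard subsequence argument upgrades the polar cap lemma to uniform-in-$\tau$ convergence $u(\tau, \sigma_1) \to \sigma_1/\sqrt{2(n-1)}$ on any fixed compact set $\sigma_1 \leq R$; choosing $R_0 := 2\sqrt{2(n-1)}$ pins the point where $u = 1/2$ within $\sigma_1$-distance $R_0$ of the pole for $\tau$ large. Combining these, the set $\{u \geq 1/2\}$ extends from the equator out to $\sigma$-distance at least $D\sqrt{2(n-1)}\, e^{\tau/2} - R_0$, which dominates $4A\sqrt{\tau}$ once $\tau \geq \tau_0(g_0, A)$; reflection symmetry finishes the left half.

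The only step requiring care is the upgrade of the polar cap lemma from subsequential convergence to uniform-in-$\tau$ convergence on the fixed compact set $\sigma_1 \leq R_0$. This is immediate by contradiction, since any failure would produce a sequence $\tau_i \to \infty$ along which the conclusion of the polar cap lemma fails, contradicting the proposition identifying every subsequential blowup at the pole as the flat space. Everything else is bookkeeping of the critical-point structure against the bounds already established in this section.
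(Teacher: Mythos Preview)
Your proof is correct and shares the Sturmian/critical-value skeleton with the paper, but the two diverge on how to guarantee that the window $|\sigma| \leq 4A\sqrt{\tau}$ stays inside the region where $u \geq \tfrac12$. The paper never enters the polar cap at all: it applies the global gradient bound $|u_\sigma| = |\psi_s|/\sqrt{2(n-1)} \leq C(g_0)$ on the \emph{equator} side, so that from $u(\tau,0)\to 1$ and $u(\tau,\sigma_N)\geq D e^{\tau/2}$ one reads off directly $\sigma_N \geq c\,e^{\tau/2} > 5A\sqrt\tau$; hence $|\sigma|\leq 4A\sqrt\tau$ lies between the outermost bumps and the critical-value bounds finish immediately. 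Your route instead estimates the cap length from the \emph{pole} side and then appeals to the flat-limit lemma (upgraded to full-$\tau$ convergence by the standard sub-subsequence argument) to pin down the crossing point $u=\tfrac12$ within bounded $\sigma_1$-distance of the pole. This is valid but is a genuine detour: the polar asymptotics are not needed for this lemma, and the paper's one-line use of the gradient bound from the center is both shorter and more robust. A minor correction: your inequality $|\psi_s|\leq 1$ should be $|\psi_s|\leq C(g_0)$ in general (the maximum principle gives $\max\{1,\sup|\psi_s(0,\cdot)|\}$), but this only affects constants and not the argument.
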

\begin{proof}
Our Lemma \ref{caps} implies there is a first bump on each side of the origin with $\lim \psi >D>0$ and $u>D \exp(\tau/2)$. This together with $|u_\sigma|= |\psi_s/\sqrt{2(n-1)}| \leq C(g_0)$ imply that the region around the right (left) bump where $\frac{D}{2}e^{\tau/2} \leq u$ has length (measured in $\sigma$) at least $c \exp(\tau/2)$ for some positive $c=c(g_0, D)$. So at the right (left) bump we have $|\sigma|\geq c \exp(\tau/2)> 5A \sqrt{\tau}$ for $\tau\geq \tau_0(g_0, D, A)$. In particular the region $|\sigma| \leq 4A\sqrt{\tau}$ does not reach the polar caps. Since there are only finitely many local minima of $u$ and each is converging to infinity or $1$, we can conclude that for $|\sigma|\leq 4 A \sqrt{\tau}$, $u(\tau, \sigma)$ is bounded below by $\frac{1}{2}$.
\end{proof}

We will use the following application of Hamilton-Ivy pinching estimate, which was generalized to conformally flat metric in \cite{hamivy}.
\begin{lem}\label{hiv}
Assume the eigenvalues of Ricci tensor $\lambda=-\frac{\psi_{ss}}{\psi}+ (n-1)\frac{1-\psi_s^2}{\psi^2}$ and $\nu=-n\frac{\psi_{ss}}{\psi}$ satisfy $\lambda, \nu \geq -1$ initially, and assume $\log(1+T)>3$ (We can achieve both by rescaling the initial metric). There is some $C_0$ and $\tau_0$ depending only on $g_0$ with the following property: For $\tau\geq \tau_0$ and at any point, the bounds
$$ 0 < u_{\sigma \sigma} < \frac{1}{4L} \qquad |u_\sigma| \leq \frac{1}{\sqrt{4(n-1)}} \qquad \frac{1}{2} \leq u \leq L$$
for some $L$ imply the stronger bound $$ u_{\sigma \sigma} < \frac{C_0}{\tau}$$
\end{lem}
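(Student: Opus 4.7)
The plan is to apply Zhang's conformally-flat extension of the Hamilton--Ivey pinching estimate \cite{hamivy} and combine the resulting lower bound on scalar curvature with a simple upper bound to force $u_{\sigma\sigma} \lesssim 1/\tau$.

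First I would translate the curvature data into the rescaled variables. A direct computation using $\psi = \sqrt{2(n-1)(T-t)}\,u$ and $s = \sqrt{T-t}\,\sigma$ gives the two distinct sectional curvatures as
\begin{gather*}
K_0 = -\frac{\psi_{ss}}{\psi} = -\frac{u_{\sigma\sigma}}{(T-t)\,u}, \qquad K_1 = \frac{1-\psi_s^2}{\psi^2} = \frac{1 - 2(n-1)u_\sigma^2}{2(n-1)(T-t)\,u^2},
\end{gather*}
so that $\nu = nK_0$ and $\lambda = K_0 + (n-1)K_1$, and $R = 2nK_0 + n(n-1)K_1$. Under the hypotheses, $u_{\sigma\sigma} > 0$ forces $K_0 < 0$, while $|u_\sigma|^2 \leq 1/(4(n-1))$ forces $K_1 > 0$, so $\nu$ is the most negative Ricci eigenvalue. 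Dropping the nonpositive $2nK_0$ term and plugging in $u \geq 1/2$ yields the upper bound $R \leq n(n-1)K_1 \leq 2n/(T-t)$.

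Next I would invoke Zhang's pinching estimate, applicable because the warped product metric is conformally flat. The hypotheses $\lambda,\nu \geq -1$ at $t = 0$ and $\log(1+T) > 3$ are exactly the required initial normalization, and the estimate yields at any point where $\nu < 0$:
\begin{gather*}
R \geq |\nu|\bigl(\log|\nu| + \log(1+t) - 3\bigr).
\end{gather*}
Substituting $|\nu| = nu_{\sigma\sigma}/((T-t)u)$ and $\log|\nu| = \log(nu_{\sigma\sigma}/u) + \tau$, combining with the upper bound above, and multiplying through by $T-t$ gives
\begin{gather*}
\frac{u_{\sigma\sigma}}{u}\Bigl(\tau + \log\frac{nu_{\sigma\sigma}}{u} + \log(1+t) - 3\Bigr) \leq 2.
\end{gather*}

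To conclude, set $Q = u_{\sigma\sigma}/u$. The hypothesis $u_{\sigma\sigma} < 1/(4L)$ with $u \geq 1/2$ gives $0 < Q < 1/(2L)$; on this bounded range the quantity $Q \log(nQ)$ is uniformly bounded (since $x\log x$ extends continuously to $0$ at $0$), and $Q\,\log(1+t)$ and $Q$ are trivially bounded for $t \in [0,T]$. Absorbing all these contributions into a constant $C_2 = C_2(n,L,T)$, the displayed inequality reduces to $Q\tau \leq 2 + C_2$, so $u_{\sigma\sigma} = Qu \leq QL \leq C_0/\tau$ with $C_0 = L(2+C_2)$, provided $\tau_0$ is chosen large enough that $t$ is close enough to $T$ for the preceding arguments to apply. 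The main obstacle I foresee is not the computation itself but locating and verifying the precise statement of Zhang's Hamilton--Ivey inequality for dimension $n+1$ conformally flat Ricci flows with the exact form and constants for Ricci eigenvalues quoted in the lemma; the rest is substitution into the pinching inequality combined with the boundedness of $x\log x$ on a compact interval.
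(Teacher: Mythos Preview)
Your proposal is correct and follows essentially the same route as the paper: apply Zhang's Hamilton--Ivey pinching to bound $R$ from below by $|\nu|(\log|\nu|+\log(1+t)-3)$, combine with an upper bound $R\leq C/(T-t)$ coming from $u\geq \tfrac12$, and extract $u_{\sigma\sigma}/u \lesssim 1/\tau$. The only cosmetic difference is that the paper disposes of the final algebra by a short contradiction argument (assuming $\alpha=u_{\sigma\sigma}/u > \max\{4C/n,1\}/\tau$ and using $\tau-\log\tau\geq\tau/2$), whereas you bound $Q\log(nQ)$ directly on the interval $(0,1/(2L))$; both are equivalent.
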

\begin{proof}
At the given point we have the bound $$|R| \leq \frac{C}{\psi^2} = \frac{C}{(T-t)u^2} \leq \frac{C}{T-t}$$ since we have $u \geq \frac{1}{2}$. The derivative bounds imply
\begin{gather*}
    \nu=-n\frac{\psi_{ss}}{\psi}=-\frac{n}{T-t}\frac{u_{\sigma\sigma}}{u}< 0\\
    \lambda=\frac{1}{2(T-t)} \left(\frac{(1-2(n-1)u_\sigma^2-2uu_{\sigma \sigma}}{u^2} \right) >0
\end{gather*}
So by the Hamilton-Ivy estimate we have
\begin{align*}
    &\frac{C}{T-t} \geq R \geq -\nu \left(\log(-\nu) + \log(1+t) - 3\right) \geq -\nu\log(-\nu) = \\
    &\frac{n}{T-t}\frac{u_{\sigma\sigma}}{u} \log \left( \frac{n}{T-t}\frac{u_{\sigma\sigma}}{u} \right) = \frac{n}{T-t}\alpha (\log n + \tau+ \log \alpha)
\end{align*}
where $\alpha=\frac{u_{\sigma\sigma}}{u}$, and we assume $t$ is close enough to $T$. Now if $\alpha > \frac{\max\{4C/n, 1\}}{\tau}$ and $\tau$ is large enough so that $\frac{\tau}{2}\geq \log \tau$, we get
\begin{gather*}
    \frac{C}{n}\geq \alpha (\tau+ \log \alpha) \geq \alpha (\tau+ \log \frac{\max\{4C/n, 1\}}{\tau}) \geq \alpha (\frac{\tau}{2}) \Rightarrow\\
    \frac{2C/n}{\tau} \geq \alpha
\end{gather*}
which is a contradiction.
\end{proof}

The previous Lemma gives us our first apriori estimates.
\begin{lem}\label{baseptest}
There is some $C_0$ and $\tau_0$ depending only on $g_0$ such that for any neck $\sigma_*(\tau)$ with radius $\psi$ becoming zero at $t=T$ and $\tau \geq \tau_0$, we have 
\begin{gather*}
    u_{\sigma \sigma}(\tau, \sigma_*) \leq \frac{C_0}{\tau}\\
    1-\frac{C_0}{\tau} \leq u(\tau, \sigma_*)
\end{gather*}
\end{lem}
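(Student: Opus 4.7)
My plan is to prove the two bounds in sequence: the upper bound on $u_{\sigma\sigma}$ at the neck follows from verifying the hypotheses of Lemma~\ref{hiv} at $\sigma_*$, and then the lower bound on $u$ follows from restricting the PDE for $u$ to the moving critical point $\sigma_*(\tau)$ and comparing with a scalar ODE.

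For the first estimate, at a neck $\sigma_*(\tau)$ one has $u_\sigma(\tau,\sigma_*)=0$ and $u_{\sigma\sigma}(\tau,\sigma_*)\geq 0$, so the derivative hypotheses of Lemma~\ref{hiv} are essentially free apart from the strict smallness of $u_{\sigma\sigma}$. The previous lemma gives $u\leq 1$ and $u\to 1$ at any neck whose radius vanishes in the limit, so $L$ may be taken slightly larger than $1$ for $\tau\geq\tau_0$, and Lemma~\ref{ulowbound} gives $u\geq 1/2$. The smallness $u_{\sigma\sigma}<1/(4L)$ may first be crudely bounded from $|\psi\psi_{ss}|\leq C$ (which gives $u_{\sigma\sigma}\leq C/(2(n-1))$ at the neck), and then upgraded to full smallness using smooth Gromov--Hausdorff convergence to the cylinder (for which $u_{\sigma\sigma}\equiv 0$), which forces $u_{\sigma\sigma}\to 0$ at any neck whose $\sigma$-location stays in a bounded range. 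Lemma~\ref{hiv} then yields $u_{\sigma\sigma}(\tau,\sigma_*)\leq C_0/\tau$; the case $u_{\sigma\sigma}=0$ is trivial.

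For the second estimate, I evaluate the commuting-variables PDE of Lemma~\ref{ueqcom} at $\sigma=\sigma_*$. The drift terms $-\sigma u_\sigma/2$ and $-nJu_\sigma$ both vanish there since $u_\sigma=0$, leaving
\[
u_\tau(\tau,\sigma_*) = u_{\sigma\sigma}(\tau,\sigma_*) + \tfrac{1}{2}\bigl(u - \tfrac{1}{u}\bigr)(\tau,\sigma_*).
\]
Setting $w(\tau) = u(\tau,\sigma_*(\tau))$, the chain rule gives $w'(\tau) = u_\tau(\tau,\sigma_*(\tau))$ (again because $u_\sigma$ vanishes at $\sigma_*$), so combining with the first estimate and the bound $w\leq 1$ yields
\[
w'(\tau)\leq \frac{C_0}{\tau} + \frac{1}{2}\Bigl(w - \frac{1}{w}\Bigr).
\]
Setting $v = 1 - w \in [0,1)$ and using the elementary inequality $(1-w^2)/w = v(2-v)/(1-v)\geq 2v$, this converts to $v'(\tau) \geq v(\tau) - C_0/\tau$. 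A standard comparison argument now closes the proof: if $v(\tau_1)>2C_0/\tau_1$ at some $\tau_1\geq\tau_0$, then the monotonicity of $2C_0/\tau$ makes the set $\{v>2C_0/\tau\}$ forward-invariant, and on it $v'\geq v/2$, forcing $v(\tau)\geq v(\tau_1)e^{(\tau-\tau_1)/2}$, which contradicts $v\to 0$ (a consequence of the assumed cylindrical convergence $u\to 1$). Hence $1-u(\tau,\sigma_*)\leq 2C_0/\tau$ for all $\tau\geq\tau_0$.

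The main obstacle I anticipate is the verification of $u_{\sigma\sigma}<1/(4L)$ for Lemma~\ref{hiv} without circularly invoking the very $C^2$ bounds we are trying to establish. Threading this requires knowing that the moving neck $\sigma_*(\tau)$ stays in a bounded $\sigma$-range, which should follow from the Sturmian control on the number and locations of necks combined with the fact that $u\to 1$ already pins any singular neck to the compact region where smooth convergence to the cylinder supplies the needed a priori smallness of $u_{\sigma\sigma}$.
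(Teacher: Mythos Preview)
Your argument for the second estimate is correct and close in spirit to the paper's: the paper also restricts the commuting-variables equation to the moving neck and uses $u_\sigma(\tau,\sigma_*)=0$ to get $\frac{d}{d\tau}u_*\leq \frac{C_0}{\tau}+\frac{u_*}{2}-\frac{1}{2}$, then finishes with the integrating factor $e^{-\tau/2}$ rather than your comparison argument on $v=1-u_*$. Both are fine.

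The gap is in your first estimate, and it is exactly the obstacle you flag at the end. You need $u_{\sigma\sigma}(\tau,\sigma_*)<\frac{1}{4L}$ before you can invoke Lemma~\ref{hiv}, and you propose to get this from smooth convergence to the cylinder on compact $\sigma$-sets. But the lemma is stated for \emph{any} neck with $\psi\to 0$, not just the equatorial one, and nothing in the Sturmian theory or the relation $u(\tau,\sigma_*)\to 1$ pins the $\sigma$-location of such a neck to a bounded set; indeed $\sigma_*(\tau)=s_*(t)/\sqrt{T-t}$ typically runs off to infinity even when $s_*(t)$ stays bounded. So the cylindrical convergence, which is only asserted on fixed compact $\sigma$-intervals, gives no control on $u_{\sigma\sigma}(\tau,\sigma_*)$ in general, and your proposed fix is circular.

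The paper sidesteps this entirely with a self-contained ODE argument at the neck. Since $u_*\to 1$ and $u_*\leq 1$, one computes $\frac{d}{d\tau}u_*^2=2u_*u_{\sigma\sigma}(\tau,\sigma_*)+u_*^2-1$; if $u_{\sigma\sigma}(\tau,\sigma_*)\geq\frac{1}{4}$ persisted for large $\tau$, this would force $\frac{d}{d\tau}u_*^2\geq\frac{1}{8}$, which is impossible since $u_*^2\leq 1$. Hence $u_{\sigma\sigma}(\tau,\sigma_*)<\frac{1}{4}$ at some time, Lemma~\ref{hiv} applies (with $L=1$, $u_\sigma=0$), and yields $u_{\sigma\sigma}\leq\frac{C_0}{\tau}<\frac{1}{4}$, which then persists. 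This argument uses only $u_*\to 1$, $u_*\leq 1$, and $u_\sigma=0$ at the neck, so it works regardless of where the neck sits in $\sigma$.
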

\begin{proof}
Let $u_* = u(\tau, \sigma_*)$. Since $\psi \to 0$, we must have $u_* \to 1$. We may assume $\tau_0$ is large enough so that Lemma \ref{hiv} holds, $\frac{C_0}{\tau} \leq \frac{1}{8}$ in that Lemma, and $\frac{1}{2}\leq u_*\leq 1$. Since $u_*\to1$, $u_{\sigma \sigma}(\tau, \sigma_*)\geq \frac{1}{4}$ for large $\tau$ would imply
\begin{gather*}
    \frac{d}{d\tau} u_*^2 = 2u_*u_\tau(\tau, \sigma_*) = 2u_*u_{\sigma\sigma}(\tau, \sigma_*) + u_*^2-1 \geq \frac{1}{8}
\end{gather*}
which cannot hold for too long since $u_*\leq 1$. But if we ever have $u_{\sigma \sigma} \leq \frac{1}{4}$ at the neck, then Lemma \ref{hiv} is applicable (since $u_\sigma=0$ at the neck) and we get $u_{\sigma\sigma} \leq \frac{C_0}{\tau} \leq \frac{1}{8} < \frac{1}{4}$ from that point on.

This in particular implies
\begin{align*}
    &\frac{d}{d\tau} u(\tau, \sigma_*(\tau)) = u_\tau(\tau, \sigma_*) = \\
    &u_{\sigma \sigma}(\tau, \sigma_*) + \frac{1}{2}(u(\tau, \sigma_*)-\frac{1}{u(\tau, \sigma_*)}) \leq \frac{C_0}{\tau} +\frac{u(\tau, \sigma_*)}{2} -\frac{1}{2} \Rightarrow \\
    &\frac{d}{d\tau} (e^{-\tau/2} u_*) \leq e^{-\tau/2}\left( \frac{C_0}{\tau} - \frac{1}{2}\right) \Rightarrow \\
    &0 - e^{-\tau/2} u_* = \left. e^{-\tau/2} u_* \right\rvert_\tau^\infty \leq \int_\tau^\infty e^{-r/2}\left( \frac{C_0}{r} - \frac{1}{2}\right)\, dr \leq \\
    &\int_\tau^\infty e^{-r/2}\left( \frac{C_0}{\tau} - \frac{1}{2}\right)\, dr = \frac{C_0}{\tau}2e^{-\tau/2} - e^{-\tau/2} \Rightarrow\\
    &1-\frac{C}{\tau} \leq u(\tau, \sigma_*)
\end{align*}
\end{proof}

\subsection{Estimate For $u_\sigma$}

We now move on to the apriori decay estimate for $u_\sigma$. Any point with $|\sigma| \leq A \sqrt{\tau}$ is between the left-most and right-most bumps. So, at any $\tau$, it would either be next to a bump with $u \to \infty$, or between a neck with $u \to 1$ and a bump with $u \leq K$. In the first case we can integrate the estimate in Lemma \ref{hiv} from the nearest neck. But in the second case, the nearest neck might be too far and this idea does not work. We can integrate Lemma \ref{hiv} only up to a distance $o(\tau)$ because the upper bound for $u_\sigma$ is not guaranteed for larger scales.

We use a barrier argument instead. Consider a neck and focus on the region on the right or left side of it between the neck and the next bump. In such a region, we can use $u$ as a variable and compute the evolution equation of $\psi_s^2$ in this new variable. This has been done in multiple previous works, such as \cite{degenneck}. We will however include the proof for completeness. After computing the equation, we construct an upper barrier in terms of $u$. The boundary conditions are controlled using the estimates for $u$ in the previous subsection.

\begin{lem}\label{radeq}
Let $Z(\tau, u)$ be a function such that
$$\psi_s^2(t,x) = 2(n-1)u_\sigma^2 (\tau, \sigma)= Z(\tau, u(\tau, \sigma))$$
The function $Z$ satisfies the following equation $\mathcal{F}[Z]=0$ with commuting variables $u$ and $\tau$:
\small
\begin{gather}
    Z Z_{u u} - \frac{1}{2} Z_u^2 + \frac{n-1-Z}{u} Z_u + \frac{2(n-1)}{u^2} (1-Z)Z -(n-1) u Z_u - 2(n-1)Z_\tau = 0
\end{gather}
\normalsize
\end{lem}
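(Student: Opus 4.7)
The plan is to view $u$ as the spatial coordinate on a region between a neck and the next bump, where $u_\sigma$ has a definite sign and the map $\sigma \mapsto u(\tau,\sigma)$ is invertible. In the resulting coordinates $(\tau, u)$ the partial derivatives commute, and $Z(\tau, u) = 2(n-1) u_\sigma^2$ becomes a genuine function of these commuting variables. (In applications one then glues such local identities on each monotone piece.)

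First I would convert all $\sigma$-derivatives of $u$ into $u$-derivatives of $Z$. Using $\partial_u|_\tau = u_\sigma^{-1}\,\partial_\sigma$ and differentiating $Z = 2(n-1) u_\sigma^2$ twice gives the clean relations
\begin{gather*}
u_{\sigma\sigma} \;=\; \frac{Z_u}{4(n-1)}, \qquad u_{\sigma\sigma\sigma} \;=\; \frac{u_\sigma\, Z_{uu}}{4(n-1)}.
\end{gather*}

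Next I would compute $Z_\tau$ at fixed $u$. Since $\sigma_\tau|_u = -u_\tau/u_\sigma$, the chain rule produces
\begin{gather*}
Z_\tau\big|_u \;=\; 4(n-1)\bigl(u_\sigma\, u_{\sigma\tau} \;-\; u_{\sigma\sigma}\, u_\tau\bigr).
\end{gather*}
The crucial subtlety is that $u_{\sigma\tau} \neq (u_\tau)_\sigma$ in the original non-commuting variables. Using the commutator relation $[\partial_\tau,\partial_\sigma] = -(\tfrac{1}{2} + n\,u_{\sigma\sigma}/u)\,\partial_\sigma$ recalled before Lemma \ref{ueqcom}, one gets
\begin{gather*}
u_{\sigma\tau} \;=\; (u_\tau)_\sigma \;-\; \Bigl(\tfrac{1}{2} + n\,\tfrac{u_{\sigma\sigma}}{u}\Bigr)\,u_\sigma.
\end{gather*}
Substituting equation (3) for $u_\tau$ and differentiating it in $\sigma$ expresses $(u_\tau)_\sigma$ explicitly in terms of $u, u_\sigma, u_{\sigma\sigma}$, and $u_{\sigma\sigma\sigma}$.

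Plugging all of this back into the expression for $Z_\tau|_u$ and rewriting $u_\sigma^2, u_{\sigma\sigma}, u_{\sigma\sigma\sigma}$ via the Step 1 identities yields a polynomial relation in $Z, Z_u, Z_{uu}, u$, and $Z_\tau$. The remaining work is algebraic: the $u_\sigma^2 u_{\sigma\sigma}/u$ contributions coming from $(u_\tau)_\sigma$ and from the commutator correction combine to produce the $-ZZ_u/u$ and $(n-1)Z_u/u$ pieces, while the $u_\sigma^4/u^2$ contribution collapses with $u_\sigma^2/u^2$ into the clean $2(n-1)(1-Z)Z/u^2$ coefficient. The main obstacle is bookkeeping around non-commuting derivatives; once the commutator is handled correctly, the rest is a direct, if lengthy, simplification, after which the stated form of $\mathcal{F}[Z]=0$ falls out.
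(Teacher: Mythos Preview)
Your proposal is correct and is essentially the same computation as the paper's, organized slightly differently. The paper first derives the evolution of $z=u_\sigma$ and then of $Z=2(n-1)z^2$ in the non-commuting $(\tau,\sigma)$ variables, converts $\partial_\sigma$ to $\partial_u$ via $\partial_\sigma = \sqrt{Z/2(n-1)}\,\partial_u$, and only at the end adds the correction $2(n-1)Z_u u_\tau$ to pass from $\partial_\tau|_x$ to $\partial_\tau|_u$; you instead go straight to $Z_\tau|_u$ via the chain rule and invoke the commutator once to handle $u_{\sigma\tau}$, which amounts to the same bookkeeping.
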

\begin{proof}
We differentiate both sides of the defining relation for $Z$ with respect to $\tau$. To compute the equation for $u_\sigma^2$, we start from the equation for $u$ and differentiate. Let $z=u_\sigma=\frac{1}{\sqrt{2(n-1)}}\sqrt{Z}$. In the non-commuting variables we have
\begin{align*}
    & u_\tau = u_{\sigma \sigma} + \frac{1}{2}\left( u-\frac{1}{u}\right) + (n-1) \frac{u_\sigma^2}{u}\Rightarrow\\
    & z_\tau = z_{\sigma \sigma} + \frac{1}{2}\left( z+\frac{z}{u^2}\right)+ 2(n-1) \frac{zz_\sigma}{u} - (n-1) \frac{z^3}{u^2} - \left(\frac{1}{2}+ n\frac{z_\sigma}{u} \right)z\\
    & = z_{\sigma \sigma} + \frac{1}{2} \frac{z}{u^2}+ (n-2) \frac{zz_\sigma}{u} - (n-1) \frac{z^3}{u^2}\Rightarrow\\
    & Z_\tau = 2(n-1)(z^2)_\tau = Z_{\sigma \sigma} - 4(n-1)(z_\sigma)^2 + \frac{1}{u^2}Z + (n-2)\frac{zZ_\sigma}{u} - \frac{1}{u^2}Z^2
\end{align*}
where we have used $2 z z_{\sigma \sigma}= (z^2)_{\sigma \sigma} - 2(z_\sigma)^2$ and $2z^2z_\sigma= z (z^2)_\sigma$. Now note
\begin{gather*}
    \frac{\partial}{\partial \sigma} = u_\sigma \frac{\partial}{\partial u}  = z\frac{\partial}{\partial u} =\sqrt{\frac{Z}{2(n-1)}} \frac{\partial}{\partial u}
\end{gather*}
So we get $zZ_\sigma = \frac{1}{2(n-1)}ZZ_u$ and
\begin{gather*}
    z_\sigma = \sqrt{\frac{Z}{2(n-1)}} \frac{\partial}{\partial u} \sqrt{\frac{Z}{2(n-1)}} = \frac{1}{4(n-1)}Z_u\\
    Z_{\sigma \sigma} = \frac{1}{2(n-1)}\sqrt{Z} \frac{\partial}{\partial u}\left( \sqrt{Z} Z_u \right) = \frac{1}{2(n-1)} \left(\frac{1}{2}Z_u^2+ZZ_{uu} \right)
\end{gather*}
and hence we have
\begin{gather*}
    2(n-1)Z_\tau = ZZ_{uu} + \frac{2(n-1)}{u^2}Z(1-Z) + (n-2)\frac{ZZ_u}{u}
\end{gather*}

To get the equation in commuting variables, we must add $2(n-1)Z_u u_\tau$ to the right hand side. We have $2u_\sigma u_{\sigma \sigma} = \frac{1}{2(n-1)}Z_\sigma = \frac{1}{2(n-1)} u_\sigma Z_u$, therefore
\begin{align*}
    & u_\tau = u_{\sigma \sigma} + \frac{1}{2}\left( u-\frac{1}{u}\right) + (n-1) \frac{u_\sigma^2}{u}= \frac{1}{4(n-1)}Z_u + \frac{1}{2}\left( u-\frac{1}{u}\right)+ \frac{1}{2}\frac{Z}{u}\Rightarrow\\
    & 2(n-1)Z_u u_\tau = \frac{1}{2}Z_u^2 + (n-1)\left(+ ( u-\frac{1}{u})Z_u+ \frac{ZZ_u}{u}\right)
\end{align*}
Putting everything together, we are done.
\end{proof}

We now construct a super-solution for the equation we derived.
\begin{lem}\label{supersol}
Consider the equation
\begin{equation}
    z_\tau = \frac{1}{2(n-1)}\left ( zz_{uu} - \frac{1}{2} z_u^2 + \frac{n-1-z}{u} z_u + \frac{2(n-1)}{u^2} (1-z)z -(n-1) u z_u\right)
\end{equation}
For every positive number $c$ and any $L>1$, there exist $B_0$ and $\tau_0$ depending on $c$ and $L$ such that for $B \geq B_0$ and $\tau \geq \tau_0$ the function
$$ \bar Z(\tau, u) = B\frac{1}{\tau}\left( 1-\frac{1}{(u+\frac{c}{\tau})^2} \right)$$
is a super-solution for $1-\frac{c}{\tau}\leq u \leq L$.
\end{lem}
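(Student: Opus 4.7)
My plan is to substitute $\bar Z$ directly into the equation and verify that the defect
$$\mathcal{G}[\bar Z] := \bar Z_\tau - \frac{1}{2(n-1)} E[\bar Z], \qquad E[z] := zz_{uu} - \tfrac{1}{2}z_u^2 + \tfrac{n-1-z}{u}z_u + \tfrac{2(n-1)}{u^2}(1-z)z - (n-1)u z_u,$$
is nonnegative on $1 - c/\tau \leq u \leq L$ once $B$ and $\tau$ are large. Writing $w = u + c/\tau$, direct computation gives $\bar Z_u = 2B/(\tau w^3)$, $\bar Z_{uu} = -6B/(\tau w^4)$, and $\bar Z_\tau = -(B/\tau^2)(1 - w^{-2}) - 2Bc/(\tau^3 w^3)$. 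Since $u \geq 1 - c/\tau$ we have $w \geq 1$, so $\bar Z \geq 0$ and $|\bar Z_\tau| = O(B/\tau^2)$ uniformly in $u \in [1 - c/\tau, L]$.

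The central algebraic observation is that the profile $u \mapsto 1 - u^{-2}$ exactly solves the linear problem $z_u(1/u - u) + 2z/u^2 = 0$, which is the part of $E$ that would otherwise dominate at order $B/\tau$; this is precisely the reason for the choice of $\bar Z$. Using $w$ in place of $u$ breaks this cancellation only slightly, and a short calculation using $u(1-u^2) + w(w^2 - 1) = (w - u)(w^2 + uw + u^2 - 1)$ yields
$$\bar Z_u\cdot \frac{1-u^2}{u} + \frac{2\bar Z}{u^2} = \frac{2Bc\bigl(w^2 + uw + u^2 - 1\bigr)}{\tau^2 u^2 w^3},$$
which is positive and of size $O(Bc/\tau^2)$, and therefore contributes against the super-solution inequality after the multiplication by $n-1$ in $E$. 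Together with $\bar Z_\tau$, this leaves an unfavorable contribution of size $O(B/\tau^2)$ in $\mathcal{G}[\bar Z]$, with constant depending only on $c$, $L$, and $n$.

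On the favorable side, the four genuinely quadratic pieces of $E$, namely $\bar Z \bar Z_{uu}$, $-\tfrac{1}{2}\bar Z_u^2$, $-\bar Z \bar Z_u/u$, and $-2(n-1)\bar Z^2/u^2$, are all nonpositive on $w \geq 1$, so each one contributes favorably to $-E/(2(n-1))$. It suffices to retain the term $-\tfrac{1}{2}\bar Z_u^2$ alone, which already gives
$$-\frac{1}{2(n-1)}\bigl(-\tfrac{1}{2}\bar Z_u^2\bigr) = \frac{B^2}{(n-1)\tau^2 w^6} \geq \frac{\eta B^2}{\tau^2}$$
for some $\eta = \eta(n, L) > 0$, uniformly in $u \in [1 - c/\tau, L]$ provided $\tau$ is large enough that $w \leq L + 1$.

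Combining the two estimates gives $\mathcal{G}[\bar Z] \geq \eta B^2/\tau^2 - C(c, L, n)\, B/\tau^2$, which is nonnegative once $B \geq B_0(c, L, n)$ and $\tau \geq \tau_0(c, L)$, as required. The main obstacle is simply careful sign bookkeeping: the unfavorable $O(Bc/\tau^2)$ term does not come from any single nonlinearity but from the $w$-versus-$u$ mismatch in what would otherwise be an exact cancellation, so one must keep the factorization of $w^3 - u^3$ in mind. Both endpoint conditions on $u$ are genuinely used: $u \geq 1 - c/\tau$ ensures $w \geq 1$ (and hence $\bar Z \geq 0$, fixing the signs), while $u \leq L$ bounds $w$ above and prevents the favorable coefficient $1/w^6$ from degenerating.
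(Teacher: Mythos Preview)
Your argument is correct and follows essentially the same strategy as the paper: both exploit that the profile $1-u^{-2}$ annihilates the linear part $\mathcal D[z]=\tfrac{2}{u^2}z+\tfrac{1-u^2}{u}z_u$, so that the shift $u\mapsto w=u+c/\tau$ produces only an $O(B/\tau^2)$ error, which is then beaten by the strictly negative quadratic part $\mathcal Q[\bar Z]$ of order $B^2/\tau^2$. The differences are organizational: the paper writes $\bar Z(\tau,u)=Z_1(\tau,u+c/\tau)$ and bounds $\mathcal D[\bar Z]-\mathcal D[Z_1](\cdot+c/\tau)$, $\mathcal Q[\bar Z]-\mathcal Q[Z_1](\cdot+c/\tau)$ term by term, whereas you compute $\mathcal D[\bar Z]$ directly via the factorization $w^3-u^3=(w-u)(w^2+uw+u^2)$; and the paper uses the full expression $\mathcal Q[Z_1]=\tfrac{B^2}{\tau^2}\bigl(\tfrac{2(1-n)}{u^2}+\tfrac{4(n-3)}{u^4}+\tfrac{2(4-n)}{u^6}\bigr)$ for the favorable bound, while you observe that all four quadratic pieces are nonpositive and retain only $-\tfrac12\bar Z_u^2$, which is a small simplification.
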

\begin{proof}
For notational simplicity, define
\begin{gather*}
    \mathcal{D}[z](\tau, u) = \left. \left( \frac{2}{u^2}z+ (\frac{1}{u}- u) z_u \right) \right \rvert_{(\tau, u)}\\
    \mathcal{Q}[z](\tau, u)= \left. \left(z z_{uu} - \frac{1}{2} z_u^2 - \frac{1}{u} zz_u - \frac{2(n-1)}{u^2} z^2 \right) \right \rvert_{(\tau,u)} 
\end{gather*}
so the equation for $z$ can be written as
\begin{gather*}
    (n-1)\mathcal{D}[z]+\mathcal{Q}[z]-2(n-1)z_\tau =\mathcal{F}[z]=0
\end{gather*}
First let $Z_1(\tau, u) = B\frac{1}{\tau}\left(1-\frac{1}{u^2}\right)$. For any $L_1$ and for $1\leq u\leq L_1$ we have
\begin{gather*}
    (n-1) \mathcal{D}[Z_1](\tau, u) = 0 \\
    \mathcal{Q}[Z_1](\tau, u)= B^2 \frac{1}{\tau^2} \left(\frac{2(1-n)}{u^2} + \frac{4(n-3)}{u^4} + \frac{2(4-n)}{u^6} \right)<-B^2 \frac{1}{\tau^2} C_{L_1} \leq 0
\end{gather*}
for some positive $C_{L_1}$ depending on $n$ and $L_1$. Now assume $\tau \geq 1$ and pick $L_1 >L+c$. We have $\bar Z(\tau, u)= Z_1(\tau, u+\frac{c}{\tau})$, and the assertion holds essentially because a perturbation of order $\frac{1}{\tau}$ creates errors of order at most $\frac{1}{\tau^2}$. 
We compute the contribution of $\mathcal{D}$, $\mathcal{Q}$ and $\bar Z_\tau$ separately. Set $\bar Z=\frac{B}{\tau}H$. We have
\begin{align*}
    & \mathcal{D}[\bar Z](\tau,u)-\mathcal{D}[Z_1](\tau,u+\frac{c}{\tau})\\
    & = (\frac{2}{u^2}-\frac{2}{(u+\frac{c}{\tau})^2})Z+ (\frac{1}{u}-u - \frac{1}{u+\frac{c}{\tau}}+u+\frac{c}{\tau})Z_u\\
    & = \frac{2Bc}{\tau^2}(\frac{2u+\frac{c}{\tau}}{u^2(u+\frac{c}{\tau})^2})H + \frac{Bc}{\tau^2}(\frac{1}{u(u+\frac{c}{\tau})}+1)H_u \leq C\frac{B}{\tau^2}\\
    & \mathcal{Q}[\bar Z](\tau,u) - \mathcal{Q}[Z_1](\tau,u+\frac{c}{\tau}) \\
    & = -(\frac{1}{u}-\frac{1}{u+\frac{c}{\tau}})\bar Z\bar Z_u -2(n-1)(\frac{1}{u^2}- \frac{1}{(u+\frac{c}{\tau})^2})\bar Z^2 \\
    & =-\frac{B^2c}{\tau^3}\frac{1}{u(u+\frac{c}{\tau})}HH_u -2(n-1) \frac{B^2c}{\tau^3}(\frac{2u+\frac{c}{\tau}}{u^2(u+\frac{c}{\tau})^2})H^2 \leq C\frac{B}{\tau^3}\\
    & -\bar Z_\tau(\tau,u)= \left( B\frac{1}{\tau^2}( 1-\frac{1}{( u+\frac{c}{\tau})^2} ) + Bc\frac{1}{\tau^3}\frac{2}{(u+\frac{c}{\tau})^3} \right) \leq C\frac{B}{\tau^2}
\end{align*}
We have used the fact that $H$ and $H_u$ are positive and bounded on $[1-\frac{c}{\tau}, L]$. Putting these together we have
\begin{align*}
    & \mathcal{F}[\bar Z](\tau,u) = (n-1)\mathcal{D}[\bar Z](\tau,u)+\mathcal{Q}[\bar Z](\tau,u)-2(n-1)\bar Z_\tau(\tau,u) = \\
    & (n-1)\left( \mathcal{D}[\bar Z](\tau,u)-\mathcal{D}[Z_1](\tau,u+\frac{c}{\tau}) \right)+ \left( \mathcal{Q}[\bar Z](\tau,u) - \mathcal{Q}[Z_1](\tau,u+\frac{c}{\tau})\right)\\
    & +(n-1)\mathcal{D}[Z_1](\tau,u+\frac{c}{\tau})+\mathcal{Q}[Z_1](\tau,u+\frac{c}{\tau})-2(n-1)\bar Z_\tau(\tau,u) \leq \\
    & C\frac{B}{\tau^2} + C\frac{B}{\tau^3} - \frac{B^2}{\tau^2}C_{L_1}^2
\end{align*}
for some $C$ depending on $c$, $n$ and $L$. Since $C_{L_1}>0$, by picking $B$ and then $\tau$ large depending on $c$, $n$ and $C_{L_1}$ we can make the last expression negative.
\end{proof}

\begin{lem}\label{barrier}
Take a neck with $u \to 1$ and a bump with $1\leq u \leq K$, and consider the region between them on one side of the neck. Let $C_0$ be as in Lemma \ref{baseptest} for the neck, and suppose $$Z(\tau, u(\sigma, \tau)) = \psi_s^2$$
in this region. Then there exists $B$ and $\tau_0$ depending only on $g_0$ such that $\tau \geq \tau_0$ implies
\begin{gather*}
    Z(\tau, u) \leq B\frac{1}{\tau}\left( 1-\frac{1}{(u+\frac{2C_0}{\tau})^2} \right)
\end{gather*}
\end{lem}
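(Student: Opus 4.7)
\medskip

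\noindent\textbf{Proof proposal.}
The plan is to treat Lemma \ref{barrier} as a standard parabolic comparison argument in the $(\tau,u)$ coordinates introduced by Lemma \ref{radeq}, using the function $\bar Z$ produced by Lemma \ref{supersol} with parameters $c=2C_0$ and $L=K$. Because a neck is a strict local minimum and the adjacent bump a strict local maximum, on the region considered $u(\tau,\cdot)$ is a monotone function of $\sigma$ for each fixed $\tau$, so $Z(\tau,u)=\psi_s^{2}$ is well-defined as a function on a moving domain $\{(\tau,u):\tau\ge\tau_0,\ u_{*}(\tau)\le u\le u_{b}(\tau)\}$, where $u_{*}(\tau)$ and $u_{b}(\tau)$ denote the rescaled radii at the neck and the bump respectively.

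I would then verify that $\bar Z\ge Z$ on the parabolic boundary of this domain. On the two lateral boundaries $u=u_{*}(\tau)$ and $u=u_{b}(\tau)$ we have $Z=\psi_s^{2}=0$ since both are critical points of $\psi$, so the inequality reduces to $\bar Z\ge 0$ at these values of $u$. With the choice $c=2C_0$, Lemma \ref{baseptest} gives
\[
u_{*}(\tau)+\tfrac{2C_0}{\tau}\ge 1+\tfrac{C_0}{\tau}>1,
\]
and obviously $u_b(\tau)+2C_0/\tau>1$ as well, so $\bar Z$ is strictly positive on both lateral boundaries. On the initial slice $\tau=\tau_0$, the gradient bound $|\psi_s|\le\max\{1,\sup|\psi_s(0,\cdot)|\}$ from the earlier Lemma gives an a priori bound on $Z(\tau_0,u)$ depending only on $g_0$, while $\bar Z(\tau_0,u)$ admits a positive lower bound proportional to $B$ on the same $u$-range. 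Taking $B$ large enough (depending only on $g_0$, $C_0$, $K$) forces $\bar Z(\tau_0,u)\ge Z(\tau_0,u)$ everywhere on the initial slice.

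Finally, the comparison itself: suppose $W:=Z-\bar Z$ first attains $0$ at some interior point $(\tau_1,u_1)$. There $W=0$, $W_u=0$, $W_{uu}\le 0$, $W_\tau\ge 0$, and $Z=\bar Z>0$ so the equation is genuinely parabolic. Subtracting $\mathcal{F}[\bar Z]<0$ from $\mathcal{F}[Z]=0$ and using that all algebraic terms cancel at $W=W_u=0$, one gets $\bar Z\,W_{uu}-2(n-1)W_\tau=-\mathcal{F}[\bar Z]>0$, which contradicts the sign conditions on $W_{uu}$ and $W_\tau$. The main obstacle I expect is bookkeeping on the moving lateral boundaries: one must guarantee that the super-solution property and positivity of $\bar Z$ hold \emph{all the way down} to $u=u_{*}(\tau)$, not just on $\{u\ge 1\}$, which is precisely why the shifted denominator $u+\tfrac{2C_0}{\tau}$ with $c=2C_0$ (rather than $c=0$) is built into $\bar Z$. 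The compatibility of this shift with Lemma \ref{baseptest}'s lower bound $u_{*}\ge 1-C_0/\tau$ is what makes the whole argument close.
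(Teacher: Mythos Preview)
Your proposal is correct and follows essentially the same route as the paper: choose $c=2C_0$ and $L=K$ in Lemma \ref{supersol}, enlarge $B$ so that $Z<\bar Z$ at the initial time, check that the strict positivity of $\bar Z$ at both the neck (thanks to $u_*+\tfrac{2C_0}{\tau}\ge 1+\tfrac{C_0}{\tau}>1$) and the bump rules out boundary contact, and then derive a contradiction at a first interior touching point from the sign conditions on $W_u,W_{uu},W_\tau$ together with $\mathcal{F}[\bar Z]<0$. The paper phrases the contact argument as $0=\mathcal{F}[Z]\le\mathcal{F}[\bar Z]<0$ rather than via your subtraction identity $\bar Z\,W_{uu}-2(n-1)W_\tau=-\mathcal{F}[\bar Z]$, but these are the same computation.
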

\begin{proof}
In Lemma \ref{supersol}, take $L=K$ and $c=2C_0$ of Lemma \ref{baseptest}. This give some $B_0$ and $\tau_0$. By taking $\tau_0$ even larger, we may assume the bound of Lemma \ref{baseptest} at the neck and the bound $1 \leq u \leq K$ at the bump hold. Now take $B$ larger than $B_0$, and large enough so that $\bar Z(\tau, u) = B\frac{1}{\tau}\left( 1-\frac{1}{(u+\frac{C_0}{\tau})^2} \right)$ satisfies $Z < \bar Z$ at $\tau_0$.

Assume there is a first time $\tau_1$ such that $Z< \bar Z$ fails, so that $Z(\tau_1, u_*)=\bar Z(\tau_1, u_*)$ for some $u_*$. $Z$ is zero at the neck and the bump, and by Lemma \ref{baseptest} we have $1-\frac{C_0}{\tau} \leq u$ at the neck. However $\bar Z$ is zero only at $1-\frac{2C_0}{\tau}$. So $u_*$ cannot be at the boundary. But $u_*$ being an interior point and first intersection implies
\begin{gather*}
    Z_u(\tau_1, u_*)=\bar Z_u(\tau_1, u_*) \,\, , \,\,  Z_{uu}(\tau_1, u_*) \leq \bar Z_{uu}(\tau_1, u_*) \,\, , \,\, Z_\tau (\tau_1, u_*) \geq \bar Z_\tau(\tau_1, u_*)
\end{gather*}
which gives
$$0 = \mathcal{F}[Z](\tau_1, u_*) \leq \mathcal{F}[\bar Z](\tau_1, u_*) <0$$
which is a contradiction. Therefore $Z < \bar Z$ for all $\tau \geq \tau_0$. Since there are only finitely many necks and bumps, and $K$ and $C_0$ of Lemma \ref{baseptest} depend only on $g_0$, we may choose $B$ and $\tau_0$ depending only on $g_0$.
\end{proof}

Now we can state our decay estimate for $u_\sigma$.
\begin{prop}\label{basicderestim}
For any $A$ there exist $\tau_0 = \tau_0(g_0, A)$ and $C_A=C(g_0, A)$ such that for $\tau \geq \tau_0$
\begin{gather*}
    |u_\sigma| \leq \frac{C_A}{\sqrt{\tau}} \quad \text{for}\,\, |\sigma| \leq 4A\sqrt{\tau}
\end{gather*}
\end{prop}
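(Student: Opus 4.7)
The strategy is to split the region $|\sigma|\leq 4A\sqrt{\tau}$ according to which interval between consecutive necks and bumps the point lies in. By reflection symmetry it suffices to treat $\sigma_0\in[0,4A\sqrt{\tau}]$. The point lies in a maximal interval $(\sigma_N,\sigma_B)$, where $\sigma_N$ is a neck and $\sigma_B$ the next bump, with $u$ monotonic in between. Lemma \ref{caps} together with the global bound $|u_\sigma|\leq C(g_0)$ (from $|\psi_s|\leq C$) places any non-singularizing bump at $|\sigma|\gg 4A\sqrt{\tau}$, so $\sigma_N$ is necessarily a singularizing neck; the bump $\sigma_B$ is either a singularizing bump with $u(\sigma_B)\leq K$, or sits far beyond $4A\sqrt{\tau}$.

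\textbf{Case 1 (both bounding features singularizing).} Apply Lemma \ref{barrier} directly on $[\sigma_N,\sigma_B]$. Since $u(\sigma_0)\in[1-C_0/\tau,K]$,
\[
u_\sigma(\sigma_0)^2=\frac{Z(\tau,u)}{2(n-1)}\leq \frac{B}{2(n-1)\tau}\left(1-\frac{1}{(u+2C_0/\tau)^2}\right)\leq\frac{C(g_0)}{\tau},
\]
giving $|u_\sigma(\sigma_0)|\leq C(g_0)/\sqrt{\tau}$, with a constant depending only on $g_0$.

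\textbf{Case 2 ($\sigma_B$ non-singularizing, hence $\sigma_B\gg 4A\sqrt{\tau}$).} Bootstrap the pointwise estimate of Lemma \ref{hiv} from the neck. Set $L:=1+8A^2C_0$ and let
\[
\sigma^*:=\sup\{\sigma\in[\sigma_N,4A\sqrt{\tau}] : u_{\sigma\sigma}(\sigma')\leq C_0/\tau \text{ for all } \sigma'\in[\sigma_N,\sigma]\}.
\]
Lemma \ref{baseptest} gives $u_{\sigma\sigma}(\sigma_N)\leq C_0/\tau$, $u(\sigma_N)\geq 1-C_0/\tau$, $u_\sigma(\sigma_N)=0$, so $\sigma^*>\sigma_N$. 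Integrating the bootstrap inequality, and using monotonicity $u_\sigma\geq 0$ on $(\sigma_N,\sigma_B)$,
\[
0\leq u_\sigma(\sigma)\leq\frac{C_0}{\tau}(\sigma-\sigma_N),\qquad u(\sigma)\leq 1+\frac{C_0}{2\tau}(\sigma-\sigma_N)^2
\]
on $[\sigma_N,\sigma^*]$, hence $u_\sigma\leq 4AC_0/\sqrt{\tau}$ and $u\leq L$ there, while clearly $u\geq 1-C_0/\tau\geq 1/2$. Suppose for contradiction that $\sigma^*<4A\sqrt{\tau}$. By closedness $u_{\sigma\sigma}(\sigma^*)=C_0/\tau>0$, and for $\tau$ large enough all the hypotheses of Lemma \ref{hiv} ($0<u_{\sigma\sigma}<1/(4L)$, $|u_\sigma|<1/\sqrt{4(n-1)}$, $1/2\leq u\leq L$) hold at $\sigma^*$; the lemma then forces $u_{\sigma\sigma}(\sigma^*)<C_0/\tau$ strictly, a contradiction. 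Hence $\sigma^*=4A\sqrt{\tau}$ and $|u_\sigma(\sigma_0)|\leq 4AC_0/\sqrt{\tau}=:C_A/\sqrt{\tau}$.

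\textbf{Main obstacle.} The delicate step is the self-consistency of the Case 2 bootstrap over a distance of order $\sqrt{\tau}$: one must check that the integrated bounds on $u_\sigma$ and $u$ keep the Lemma \ref{hiv} hypotheses valid throughout. The value $L=1+8A^2C_0$ is calibrated precisely so that the quadratic upper bound on $u$ stays below $L$ for $\sigma-\sigma_N\leq 4A\sqrt{\tau}$. A subtlety is that $u_{\sigma\sigma}$ may change sign inside the interval, but the bound $u_{\sigma\sigma}\leq C_0/\tau$ is trivial wherever $u_{\sigma\sigma}\leq 0$, while at the bootstrap boundary $\sigma^*$ the definition of the sup automatically supplies $u_{\sigma\sigma}(\sigma^*)=C_0/\tau>0$, which is exactly the strict positivity Lemma \ref{hiv} requires to yield the needed contradiction.
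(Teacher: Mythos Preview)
Your proof is correct and follows essentially the same approach as the paper: the same two-case split (singularizing neck and bump handled by the barrier Lemma~\ref{barrier}; neck adjacent to a non-singularizing bump handled by bootstrapping the Hamilton--Ivey estimate of Lemma~\ref{hiv} from the neck), with the same integrated bounds on $u_\sigma$ and $u$ to maintain the hypotheses of Lemma~\ref{hiv}. Your treatment of the bootstrap is in fact slightly more careful than the paper's in making explicit why the positivity hypothesis $u_{\sigma\sigma}>0$ is automatically available at the failure point $\sigma^*$.
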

\begin{proof}
Let $\sigma_*(\tau)$ and $\sigma^*(\tau)$ be the closest bumps $\sigma(\tau)$ to the origin such that $u(\tau, \sigma(\tau))\to \infty$. Assume $\tau_0$ is large enough so that Lemmas \ref{ulowbound}, \ref{hiv}, \ref{baseptest} and \ref{barrier} hold. Also take it large enough so that $\tau\geq \tau_0$ implies
\begin{gather*}
    \frac{C_0}{\tau} \leq \frac{1}{4}\\
    \frac{4C_0A}{\sqrt{\tau}} \leq \frac{1}{2\sqrt{4(n-1)}}
    \frac{C_0}{\tau} \leq \frac{1}{4(2+16C_0A^2)}
\end{gather*}
for $C_0$ in Lemma \ref{baseptest}.

Let $$\underline{\sigma}_1(\tau) < \overline{\sigma}_1(\tau) < \underline{\sigma}_2(\tau)< \dots < \overline{\sigma}_k(\tau) < \underline{\sigma}_{k+1}(\tau)$$
be the sequence of all necks and bumps that lie between $\sigma_*(\tau)$ and $\sigma^*(\tau)$. We may assume $\frac{1}{2}\leq u \leq 1$ for any neck and $\frac{1}{2} \leq u \leq K$ for any bump in the sequence. For $\tau \geq \tau_0$, any point with $|\sigma|\leq 4A\sqrt{\tau}$ lies between $\sigma_*$ and $\sigma^*$. There are two possibilities.

First, assume $\sigma \geq \underline{\sigma}_{k+1}(\tau)$ or $\sigma \leq \underline{\sigma}_1(\tau)$. We deal with the first case and the second is similar. The assumptions of Lemma \ref{hiv} hold at the neck $\underline{\sigma}_{k+1}$ since $u\leq 1$ and $u_{\sigma \sigma} \leq \frac{C_0}{\tau} \leq \frac{1}{4}$ by Lemma \ref{baseptest} and our choice. As long as they hold for $L=2+16C_0A^2$, we get $u_{\sigma\sigma} \leq \frac{C_0}{\tau}<\frac{1}{4L}$ and by our choice
\begin{gather*}
    u_\sigma(\tau, \sigma) = \int_{\underline{\sigma}_{k+1}}^\sigma u_{\sigma \sigma} \, d\sigma \leq 4A \sqrt{\tau} \frac{C_0}{\tau} = \frac{4C_0A}{\sqrt{\tau}} < \frac{1}{\sqrt{4(n-1)}}\\
    u(\tau, \sigma) = u(\tau, \underline{\sigma}_{k+1}) + \int_{\underline{\sigma}_{k+1}}^\sigma u_\sigma \, d\sigma \leq 1+ 16C_0 A^2 < L
\end{gather*}
since $\sigma- \underline{\sigma}_{k+1} \leq 4A\sqrt{\tau}$. This shows that there cannot be a first point for the assumption in Lemma \ref{hiv} to fail within $|\sigma| \leq 4A \sqrt{\tau}$

The second possibility is  $\underline{\sigma}_i(\tau) \leq \sigma \leq \overline{\sigma}_i(\tau)$ or $ \overline{\sigma}_i(\tau)  \leq \sigma \leq \underline{\sigma}_{i+1}(\tau)$. Between any consecutive neck and bump with $\frac{1}{2}\leq u \leq K$ we have
\begin{gather*}
    \psi_s^2 = 2(n-1)u_\sigma^2 \leq B\frac{1}{\tau}\left( 1-\frac{1}{(u+\frac{2C_0}{\tau})^2} \right) \leq \frac{C}{\tau}
\end{gather*}
\end{proof}

\section{Asymptotic Analysis of The Profile}\label{asympt}

As mentioned before, we choose to work with the quantity
$$f=\frac{u_\sigma}{u}$$ 
and follow the strategy of Filippas and Kohn described in section \ref{outline}. We start with localizing the equation for $f$ derived in Lemma \ref{ufeq} and state the precise estimates needed at each step. Initially, we only need very weak control of $u$, $f$ and $f_\sigma$ to implement the strategy, but our estimates can be improved if we have better control. We record these sharper versions as Corollaries after each estimate, since we will need them later. As discussed before, in order to deal with the exponentially small error terms coming from cut-offs, we use a technical modification of the Merle-Zaag ODE lemma. This is Lemma \ref{myMZ}, and it is proven in the Appendix. We postpone some of the more tedious proofs and computations to the end of the section in order to keep the flow of the arguments.

We will also discuss the following two key steps. First, we show that in the case of exponential decay, the optimal decay rate is either infinity (faster than any exponential), or an eigenvalue $\lambda_m$ of the operator $\mathcal{A} = \mathcal{L}-\frac{1}{2}$. Furthermore, the modes corresponding to smaller eigenvalues (with potentially slower decay rates) can be controlled. Second, we show how to deal with the constant mode of $u-1$ (in the direction of $h_0$). Since modes of a function and its derivative are related, we can control the ODE for the first mode of $U=\log u$ in terms of the modes of $f=U_\sigma$. This allows us to show that if the constant mode of $U$ is not small compared to $\|f\|$, then $u-1$ grows exponentially. We give the proof of our main Theorem \ref{main}, assuming the contents of the following sections.

From now on, we fix a smooth function $\chi(r)$ such that 
\begin{gather*}
\text{$\chi(r)=1$ for $|r|\leq 1$, \quad $\chi(r)=0$ for $|r| \geq 2$, \quad $r\chi'(r)\leq 0$}    
\end{gather*}
All the norms and inner products are in the Gaussian weighted $L^2$ space $$\mathfrak{H}=L^2(\rho \, d\sigma), \quad \rho(\sigma)= \exp(\frac{-\sigma^2}{4})$$
We also assume, without further mention, that $\tau\geq \tau_0$ where $\tau_0$ is large enough so that the previous results (Lemmas \ref{ulowbound} and \ref{baseptest} and Proposition \ref{basicderestim}) hold.

\subsection{Asymptotic Behavior of $f$}

Before anything, we point out that since $u \geq \frac{1}{2}$ on $|\sigma| \leq 4A\sqrt{\tau}$, we have the following bounds.
\begin{gather}
    |f| = |\frac{u_\sigma}{u}| \leq \frac{C_A}{\sqrt{\tau}} \leq M\\
    |f_\sigma| =|\frac{uu_{\sigma\sigma}-u_\sigma^2}{u^2}| \leq N
\end{gather}

First we state the evolution equation satisfied by $f$ and its localization for easier reference.

\begin{lem}
The quantity $f=\frac{u_\sigma}{u}$ satisfies the equation
\begin{gather}
    f_\tau= \mathcal{A}f -n\left( \int_0^\sigma f^2(\xi)\, d\xi  \right)f_\sigma + \left( \frac{1}{u^2}-1 \right)f -nf^3
\end{gather}
where the operator $\mathcal{A}$ is defined as
\begin{gather} \label{bdef}
    \mathcal{A} = \frac{\partial^2}{\partial \sigma^2} - \frac{\sigma}{2} \frac{\partial}{\partial \sigma} + \frac{1}{2}
\end{gather}

For any cut-off function $\eta$ we have
\begin{gather} \label{feqloc}
    (f\eta)_\tau= \mathcal{A}(f\eta) -n\left( \int_0^\sigma f^2(\xi)\, d\xi  \right) f_\sigma \eta+ \left( \frac{1}{u^2}-1 \right)f\eta -nf^3\eta + E(\tau, \sigma)
\end{gather}
where
\begin{gather}\label{eeq}
    E(\tau,\sigma) = f\eta_\tau - f\eta_{\sigma \sigma} -2 f_\sigma \eta_\sigma + \frac{\sigma}{2} f \eta_\sigma
\end{gather}
\end{lem}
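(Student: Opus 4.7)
The first displayed equation is simply a reproduction of the evolution equation for $f$ from Lemma \ref{ufeq}, rewritten with the terms reordered; no new computation is required. The real content of the statement is the localized version \eqref{feqloc} together with the explicit form \eqref{eeq} of the error $E$. My plan is to derive this by a direct product-rule calculation, accounting for the fact that $\eta$ may depend on both $\tau$ and $\sigma$.

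The key identity is the commutation between multiplication by $\eta$ and the operator $\mathcal{A}=\partial_\sigma^2-\tfrac{\sigma}{2}\partial_\sigma+\tfrac{1}{2}$. Expanding $(f\eta)_{\sigma\sigma}=f_{\sigma\sigma}\eta+2f_\sigma\eta_\sigma+f\eta_{\sigma\sigma}$ and $(f\eta)_\sigma=f_\sigma\eta+f\eta_\sigma$, I obtain
\begin{equation*}
\mathcal{A}(f\eta) \;=\; (\mathcal{A}f)\,\eta \;+\; 2f_\sigma\eta_\sigma \;+\; f\eta_{\sigma\sigma} \;-\; \tfrac{\sigma}{2}f\eta_\sigma,
\end{equation*}
so that $(\mathcal{A}f)\eta=\mathcal{A}(f\eta)-2f_\sigma\eta_\sigma-f\eta_{\sigma\sigma}+\tfrac{\sigma}{2}f\eta_\sigma$. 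The remaining terms on the right-hand side of the equation for $f$ are all pointwise (no derivatives of $f$ hit by the derivative of $\eta$), so multiplication by $\eta$ distributes through them cleanly: $\bigl(\tfrac{1}{u^2}-1\bigr)f\cdot\eta$, $-nf^3\cdot\eta$, and $-n\bigl(\int_0^\sigma f^2\bigr)f_\sigma\cdot\eta$ appear unchanged.

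For the time derivative, I write $(f\eta)_\tau=f_\tau\eta+f\eta_\tau$, so $f_\tau\eta=(f\eta)_\tau-f\eta_\tau$. Multiplying the first displayed equation by $\eta$ and substituting both this identity and the commutator formula above yields exactly \eqref{feqloc}, with the error term
\begin{equation*}
E \;=\; f\eta_\tau \;-\; f\eta_{\sigma\sigma} \;-\; 2f_\sigma\eta_\sigma \;+\; \tfrac{\sigma}{2}f\eta_\sigma,
\end{equation*}
matching \eqref{eeq}. There is no genuine obstacle here: the statement is a bookkeeping lemma that packages the standard cut-off computation into the form that will be used later, and the only point requiring attention is to remember the $f\eta_\tau$ contribution arising from the $\tau$-dependence of the cut-off.
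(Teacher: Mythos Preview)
Your proof is correct and is exactly the standard product-rule/commutator computation one would carry out; the paper itself does not supply a separate proof for this lemma, treating it as a straightforward restatement of Lemma~\ref{ufeq} together with the routine localization identity you derived.
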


Our first main estimate is the following.
\begin{prop} \label{feqlocest}
For a fixed large positive number $A$, define the cut-off functions $$\eta(\tau,\sigma)=\chi(\frac{\sigma}{A\sqrt{\tau}}) \qquad \text{and} \qquad \theta(\tau, \sigma)=\chi(\frac{\sigma}{2A\sqrt{\tau}})$$ For every $\varepsilon>0$, there exist $\tau_1=\tau_1(\tau_0, \varepsilon)$ and constants $C_0=C(g_0)$ and $C_\varepsilon=C(g_0, \varepsilon)$ such that the following holds: The function $f\eta$ satisfies
$$
(f\eta)_\tau= \mathcal{A}(f\eta) + F(\tau, \sigma) + E(\tau, \sigma)
$$
where the operator $\mathcal{A}$ and functions $F$ and $E$ are defined in \ref{bdef}, \ref{feqloc} and \ref{eeq}, and for $\tau\geq \tau_1$ we have the estimates
\begin{gather}
\|F\|^2(\tau) \leq C_0 \varepsilon \int f^2 \eta^2 \rho + C_\varepsilon \int f^4 \theta^4 \rho + C_0 \exp(- \frac{A^2}{8} \tau)\nonumber \\
\|E\|^2(\tau) \leq C_0 \exp(- \frac{A^2}{8} \tau)
\end{gather}
\end{prop}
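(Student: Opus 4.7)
The plan is to decompose $F = F_1 + F_2 + F_3$ into the three nonlinear pieces of the equation for $f$ from Lemma \ref{ufeq}: $F_1 = -n(\int_0^\sigma f^2)f_\sigma\eta$, $F_2 = (1/u^2 - 1)f\eta$, and $F_3 = -nf^3\eta$. Each is bounded separately, and the commutator $E$ is controlled via the Gaussian weight on the cutoff annulus.

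For $E$: every derivative of $\eta$ is supported on the annulus $\{A\sqrt{\tau} \leq |\sigma| \leq 2A\sqrt{\tau}\}$, on which $\rho \leq e^{-A^2\tau/4}$ pointwise. Routine calculation shows $|\eta_\tau|, |\eta_{\sigma\sigma}| \leq C/\tau$ and $|\sigma\eta_\sigma| \leq C$; combined with the apriori bounds $|f| \leq M$ and $|f_\sigma| \leq N$, each summand in $E$ is pointwise bounded on the annulus. Integrating over the annulus of length $O(\sqrt{\tau})$ yields $\|E\|^2 \leq C_0 A\sqrt{\tau}\, e^{-A^2\tau/4}$, which is absorbed into $C_0 e^{-A^2\tau/8}$ for $\tau$ large.

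For $F_3$: the pointwise bound $|f|^2 \leq M^2$ on $\mathrm{supp}(\eta)$ together with $\eta^2 \leq \theta^4$ (since $\theta \equiv 1$ on $\mathrm{supp}(\eta)$) gives $\|F_3\|^2 \leq n^2 M^2 \int f^4\theta^4\rho$, fitting the $C_\varepsilon$ term. For $F_1$: Cauchy-Schwarz gives $(\int_0^\sigma f^2 d\xi)^2 \leq |\sigma|\cdot|\int_0^\sigma f^4 d\xi|$; combined with $|f_\sigma| \leq N$ this produces $\|F_1\|^2 \leq n^2 N^2 \int |\sigma|\, |\int_0^\sigma f^4 d\xi|\, \eta^2\rho\, d\sigma$. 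Switching the order of integration (Fubini) and using the Gaussian identity $\int_\xi^\infty \sigma\rho(\sigma)\, d\sigma = 2\rho(\xi)$ bounds this by $C_0 \int f^4\theta^4\rho$.

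The main challenge is $F_2$. Using $|1/u^2 - 1| \leq C|u-1|$ on $\mathrm{supp}(\eta)$ (where $u \geq 1/2$) and the decomposition $u(\sigma) - 1 = (u(0) - 1) + \int_0^\sigma uf\, d\xi$, where by reflection symmetry $\sigma = 0$ is a neck with $|u(0) - 1| \leq C_0/\tau$ by Lemma \ref{baseptest}, one bounds $(u-1)^2 \leq 2(C_0/\tau)^2 + 2C_A^2 |\sigma| \int_0^\sigma f^2\, d\xi$ (via Cauchy-Schwarz and $|u| \leq C_A$). The boundary piece gives $O(1/\tau^2)\|f\eta\|^2 \leq C_0\varepsilon\|f\eta\|^2$ for $\tau$ large, absorbed into the $C_0\varepsilon$ term. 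The integral piece is treated by Young's inequality $|\sigma|(\int f^2) f^2 \leq \frac{1}{2}|\sigma|^2(\int f^2)^2 + \frac{1}{2}f^4$ followed by the same Cauchy-Schwarz plus Fubini technique used for $F_1$, producing $C_\varepsilon\int f^4\theta^4\rho$ terms plus negligible cutoff errors. The delicate balance between the polynomial smallness of the apriori bounds, the Gaussian tails of the weight $\rho$, and the interplay between the pointwise coefficient $|1/u^2 - 1|$ and the distribution of mass of $f^2\eta^2\rho$ is the central technical obstacle.
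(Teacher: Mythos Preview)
Your decomposition $F=F_1+F_2+F_3$ and your treatment of $E$, $F_1$, $F_3$ match the paper's argument essentially line for line (the paper's first and third lemmas in \S4.4). The difference is in $F_2$. The paper does \emph{not} use Young's inequality there; instead it integrates by parts using the Gaussian identity $\tfrac{\sigma}{2}\rho\,d\sigma=-d\rho$:
\[
\int \tfrac{\sigma}{2}\Big(\int_0^\sigma f^2\Big)f^2\eta^2\rho
= \int f^4\eta^2\rho
+ 2\int\Big(\int_0^\sigma f^2\Big)ff_\sigma\eta^2\rho
+ 2\int\Big(\int_0^\sigma f^2\Big)f^2\eta\eta_\sigma\rho.
\]
The derivative hitting $\int_0^\sigma f^2$ produces an $f^2$, so the middle term is controlled by Cauchy--Schwarz together with the $F_1$ lemma, and the last term is a cutoff error. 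This cleanly yields the claimed form.

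Your Young split $|\sigma|(\int f^2)f^2\le \tfrac12\sigma^2(\int f^2)^2+\tfrac12 f^4$, followed by Cauchy--Schwarz and Fubini as for $F_1$, does not close as written: the Fubini step on $\int|\sigma|^3|\int_0^\sigma f^4|\eta^2\rho$ produces an inner integral $\int_{|\sigma|>|\xi|}|\sigma|^3\rho\,d\sigma\sim (C+\xi^2)\rho(\xi)$, so you end up with $\int f^4\xi^2\theta^4\rho$ rather than $\int f^4\theta^4\rho$. This is \emph{not} of the form $C_\varepsilon\int f^4\theta^4\rho$ with a $\tau$-independent constant. The gap is repairable---use a weighted Young $ab\le \varepsilon a^2+\tfrac{1}{4\varepsilon}b^2$ and then invoke $f^2\sigma^2\le C_A$ on $\mathrm{supp}\,\theta$ (from $|f|\le C_A/\sqrt{\tau}$ and $|\sigma|\le 4A\sqrt{\tau}$) to push the extra piece into the $\varepsilon\|f\eta\|^2$ term---but your proposal does not indicate this, and the paper's integration-by-parts route avoids the issue entirely.
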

By examining the proof, which is presented at the end of this section, we get a more detailed variant.
\begin{cor} \label{reffeqlocest}
For $\tau\geq \tau_0$, $\|F\|^2$ is less than
\begin{small}
\begin{multline}
    (C \, \delta(\tau)^2 + C_0 \varepsilon^2(\tau)) \int f^2 \eta^2 \rho \, d\sigma +\\
    (C_0 + \frac{C_0 N^2}{\varepsilon(\tau)^2} + C(M^2+N^2)) \int f^4 \theta^4 \rho \, d\sigma + C_0 M^4 \exp(- \frac{A^2}{8} \tau) \\
\end{multline}
\end{small}
Here $\delta(\tau) = \frac{1}{u(\tau, 0)^2}-1$, $\varepsilon(\tau)$ is arbitrary and we have the bounds $|f|\leq M$ and $|f_\sigma|\leq N$ on the support of $\theta$.
\end{cor}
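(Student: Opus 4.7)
The corollary is a refined bookkeeping of the proof of Proposition \ref{feqlocest}, making explicit the dependence of the constants on $M$, $N$, $\delta(\tau)$, and a free Young's parameter $\varepsilon(\tau)$. My plan is to split $F=T_1+T_2+T_3$ into the nonlocal, potential, and cubic pieces
\[
T_1=-n\Bigl(\int_0^\sigma f^2\,d\xi\Bigr)f_\sigma\eta,\qquad T_2=\Bigl(\tfrac{1}{u^2}-1\Bigr)f\eta,\qquad T_3=-nf^3\eta,
\]
use $\|F\|^2\leq 3(\|T_1\|^2+\|T_2\|^2+\|T_3\|^2)$, and estimate each piece using the apriori bound $u\geq 1/2$ from Lemma \ref{ulowbound} together with the pointwise bounds $|f|\leq M$ and $|f_\sigma|\leq N$ on $\mathrm{supp}\,\theta$, noting that $\theta\equiv 1$ throughout $\mathrm{supp}\,\eta$ so that $\eta^2\leq\theta^4$ everywhere.

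For the cubic term, $\|T_3\|^2=n^2\int f^6\eta^2\rho\leq n^2 M^2\int f^4\theta^4\rho$, which accounts for the $CM^2$ contribution. For the potential term, I would decompose $\tfrac{1}{u^2}-1=\delta(\tau)+R(\tau,\sigma)$ with $R=\tfrac{1}{u^2}-\tfrac{1}{u(\tau,0)^2}$. The constant piece contributes $2\delta(\tau)^2\int f^2\eta^2\rho$, matching the $C\delta(\tau)^2$ coefficient. The remainder satisfies $|R|\leq C|u-u(\tau,0)|\leq C_A\int_0^\sigma|f|\,d\xi$ by $u\geq 1/2$; combining Cauchy--Schwarz $(\int_0^\sigma|f|)^2\leq|\sigma|\int_0^\sigma f^2\theta^2\,d\xi$ (valid since $\theta\equiv 1$ on the integration range), Young's inequality at parameter $\varepsilon(\tau)$, and Fubini against the Gaussian weight (exploiting that $\int_{|\sigma|\geq|\xi|}|\sigma|\rho(\sigma)\,d\sigma\leq C\rho(\xi)$ absorbs the polynomial $|\sigma|$-factor) yields an $\varepsilon(\tau)^2\int f^2\eta^2\rho$ piece together with a $C_0\int f^4\theta^4\rho$ piece.

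For the nonlocal term, the direct estimate $|f_\sigma|\leq N$ gives $\|T_1\|^2\leq n^2N^2\int\bigl(\int_0^\sigma f^2\bigr)^2\eta^2\rho$; Cauchy--Schwarz $(\int_0^\sigma f^2)^2\leq|\sigma|\int_0^\sigma f^4\theta^4\,d\xi$ followed by Fubini with the Gaussian weight produces the $CN^2\int f^4\theta^4\rho$ contribution, supplying the $CN^2$ portion of $C(M^2+N^2)$. Alternatively, Young's inequality with parameter $\varepsilon(\tau)$ applied to the product $(\int_0^\sigma f^2)\cdot f_\sigma\eta$ trades part of this into an $\varepsilon(\tau)^2\int f^2\eta^2\rho$ contribution at the expense of a $C_0 N^2/\varepsilon(\tau)^2$ coefficient in front of $\int f^4\theta^4\rho$, accounting for that term in the corollary. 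Finally, the exponential remainder $C_0 M^4\exp(-A^2\tau/8)$ comes from the annulus $A\sqrt{\tau}\leq|\sigma|\leq 2A\sqrt{\tau}$ where $\rho\leq\exp(-A^2\tau/4)$ dominates the polynomial volume factor and the pointwise bound $|f|^4\leq M^4$, in parallel with the estimate already used for the cut-off error $E$.

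The main technical delicacy is the interplay of Young's inequality and Fubini: one must verify that the Gaussian weight absorbs the polynomial-in-$|\sigma|$ factors generated by Cauchy--Schwarz on the nonlocal integral $\int_0^\sigma f^2\,d\xi$, so that the nonlocal piece is dominated cleanly by $\int f^4\theta^4\rho$ without disturbing the sharp separation between the $\delta(\tau)^2$ principal term and the $\varepsilon(\tau)^2$ correction. Once those moves are arranged in this order, the bound stated in the corollary follows by collecting terms.
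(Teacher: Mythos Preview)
Your overall plan---track the constants through the proof of Proposition \ref{feqlocest} with $F=T_1+T_2+T_3$---is exactly how the paper obtains the corollary, and your treatment of the cubic term $T_3$ and the direct bound $\|T_1\|^2\leq CN^2\int f^4\theta^4\rho$ match the paper.

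The gap is in your handling of the potential term $T_2$ and, as a consequence, your attribution of the $C_0N^2/\varepsilon(\tau)^2$ coefficient. After writing $R^2\leq C|\sigma|\int_0^\sigma f^2$, the paper does \emph{not} proceed by a Fubini/Young combination as you sketch; rather it integrates by parts using $dv=\frac{\sigma}{2}\rho\,d\sigma$ (so $v=-\rho$), which turns $\int\frac{|\sigma|}{2}\bigl(\int_0^\sigma f^2\bigr)f^2\eta^2\rho$ into
\[
\int f^4\eta^2\rho \;+\; 2\int\Bigl(\int_0^\sigma f^2\Bigr)f f_\sigma\eta^2\rho \;+\; 2\int\Bigl(\int_0^\sigma f^2\Bigr)f^2\eta\eta_\sigma\rho.
\]
It is the \emph{middle} cross term here (involving $f_\sigma$) that, after Cauchy--Schwarz and the $T_1$-type lemma, yields $2\|f\eta\|\cdot(CN^2\int f^4\theta^4\rho)^{1/2}$ and hence, by Young, the pair $\varepsilon(\tau)^2\|f\eta\|^2$ and $\frac{C_0N^2}{\varepsilon(\tau)^2}\int f^4\theta^4\rho$. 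The last term gives the $M^4$ exponential tail. So both the $\varepsilon(\tau)^2$ and the $N^2/\varepsilon(\tau)^2$ contributions come from $T_2$, not from an alternative treatment of $T_1$; your ``Young's on the product $(\int_0^\sigma f^2)\cdot f_\sigma\eta$'' cannot produce an $f^2\eta^2$ term at all. Your Fubini-only route for $T_2$ is also not clearly valid: once you bound $R^2\leq C|\sigma|\int_0^\sigma f^2$, the resulting integrand $|\sigma|(\int_0^\sigma f^2)f^2\eta^2\rho$ does not split into $\varepsilon^2\|f\eta\|^2+C_0\int f^4\theta^4\rho$ by any straightforward Young/Fubini move---the extra $|\sigma|$ factor keeps reappearing. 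The integration by parts against the Gaussian is the trick that removes it cleanly.
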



As discussed in section \ref{outline}, let
\begin{equation}
    I^2= \int f^4 \theta^4 \sigma^k \rho
\end{equation}
for some large $k$ to be chosen. For any $\varepsilon>0$ and $\delta>0$, there exists $\tau_{\varepsilon\delta}= \tau_{\varepsilon\delta}(\varepsilon, \delta)$ such that $\tau\geq \tau_{\varepsilon\delta}$ implies $|f|=|\frac{u_\sigma}{u}|<\varepsilon$ for $|\sigma|\leq \delta^{-1}$. So for $\tau\geq \tau_{\varepsilon\delta}$ we have
\begin{equation} \label{quadleqi}
    \int f^4 \theta^4 \rho = \int_{|\sigma|\leq \delta^{-1}} f^4 \theta^4 \rho+ \int_{|\sigma|> \delta^{-1}} f^4 \theta^4 \rho \leq \varepsilon^2 \int f^2 \theta^2 \rho + \delta^k I^2
\end{equation}
We will need the following more precise version of this inequality as well.
\begin{cor} \label{refquadleqi}
We may replace $\delta$ with any function $\delta(\tau)$ that becomes small enough (such as small constants) and replace $\varepsilon$ with an upper bound for $|f|$ on the set $|\sigma| \leq \delta(\tau)^{-1}$.
\end{cor}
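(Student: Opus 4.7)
The plan is to revisit the three-line derivation of inequality (\ref{quadleqi}) and observe that every step is pointwise in $\tau$, so both $\delta$ and $\varepsilon$ can be made time-dependent without any modification to the argument. First I would fix $\tau$ large and take $\varepsilon(\tau)$ to be any pointwise upper bound for $|f(\tau,\cdot)|$ on the inner region $\{|\sigma|\leq \delta(\tau)^{-1}\}$; such a bound exists because Proposition \ref{basicderestim} gives $|f|\leq M$ throughout the support of $\theta$, so in the worst case one may take $\varepsilon(\tau)=M$.

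The computation itself is then identical to the one leading to (\ref{quadleqi}): split the integral at $|\sigma|=\delta(\tau)^{-1}$,
\begin{equation*}
\int f^4\theta^4 \rho = \int_{|\sigma|\leq \delta(\tau)^{-1}} f^4\theta^4 \rho + \int_{|\sigma|>\delta(\tau)^{-1}} f^4\theta^4 \rho,
\end{equation*}
bound two factors of $f^2$ on the inner piece by $\varepsilon(\tau)^2$ to produce $\varepsilon(\tau)^2 \int f^2 \theta^2 \rho$, and on the outer piece multiply the integrand by $(\delta(\tau)|\sigma|)^k \geq 1$ to extract the factor $\delta(\tau)^k$ and reproduce the quantity $I^2$. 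Summing the two contributions yields the stated refinement.

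The "becomes small enough" clause in the corollary plays no role in the algebra above, which is valid for any positive function $\delta(\tau)$; it is a qualitative hypothesis anticipating applications. Indeed, for $\varepsilon(\tau)^2 \int f^2 \theta^2 \rho$ to be genuinely absorbable in later estimates, one wants $\delta(\tau)^{-1}$ eventually large so that the hypothesis $u\to 1$ on compact sets (combined with Lemma \ref{ulowbound}) forces $|f|$ to be small on $\{|\sigma| \leq \delta(\tau)^{-1}\}$, making $\varepsilon(\tau)$ itself small. Hence there is no real obstacle: the content of the corollary is just the observation that the proof of (\ref{quadleqi}) is uniform in $\tau$, so nothing prevents both parameters from varying with time.
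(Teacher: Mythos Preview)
Your proposal is correct and matches the paper's approach: the paper states this corollary without proof, precisely because it follows by inspecting the derivation of (\ref{quadleqi}) and observing that the splitting argument is pointwise in $\tau$, exactly as you explain. Your additional commentary on the ``becomes small enough'' clause is accurate and consistent with how the paper later applies the corollary.
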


We have the following estimate for $I$, proven at the end of this section.
\begin{prop}\label{idifineq}
Suppose $\tau \geq \tau_0$ so that and $u\geq \frac{1}{2}$, $|f| \leq \frac{C}{\sqrt{\tau}} \leq M$ and $|f_\sigma|\leq N$ for $|\sigma|\leq 4A \sqrt{\tau}$. For any positive $a>0$, there is a large even integer $k$ such that the quantity $I$ has the following property: For every $\varepsilon_1>0$ there exists $\tau_2=\tau_2(\tau_0, k, A, \varepsilon_1)$ so that $I$ satisfies the differential inequality
\begin{equation}
    \frac{d}{d\tau} I \leq -a I + \varepsilon_1 \| f\theta \| + C_0 \exp(- \frac{1}{32} A^2\tau)
\end{equation}
for $\tau \geq \tau_2$. Here $C_0 = 2n M^4+ C^2(M+N)^2$ for some universal $C$. (Note that we have the same estimate with $\theta$ replaced with $\eta$ at the expense of an exponential term. See the proof of Proposition \ref{syswexp}.)
\end{prop}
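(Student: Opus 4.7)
The plan is to differentiate $I^2 = \int f^4 \theta^4 \sigma^k \rho \, d\sigma$ in $\tau$, substitute the equation for $f_\tau$ from Lemma \ref{ufeq}, and establish the quadratic form
\[
\tfrac{d}{d\tau} I^2 \;\leq\; -2a\, I^2 \;+\; C_k \varepsilon^2 \|f\theta\|^2 \;+\; C_0 e^{-A^2 \tau/16},
\]
where $\varepsilon$ is a uniform pointwise bound on $|f|$ over a compact set $|\sigma|\leq R_k$, available for $\tau$ large because $u \to 1$ on compact sets forces $f=u_\sigma/u \to 0$ there. The stated first-order inequality then follows: one formally divides by $2I$, and the quadratic $\|f\theta\|^2$ is converted to the linear $\|f\theta\|$ either by Young's inequality in the regime where $I$ is comparable to or larger than $\varepsilon\|f\theta\|$, or by noting that $-aI + \varepsilon_1 \|f\theta\|$ is already nonnegative in the complementary regime, making the asserted inequality trivial. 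Choosing $\varepsilon$ small depending on $k$ and $\varepsilon_1$ closes this conversion.

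The $-2a I^2$ contribution comes from the $\mathcal{A}f$ piece of $f_\tau$. Using the pointwise identity $4f^3 \mathcal{A}f = \mathcal{A}(f^4) + \tfrac{3}{2} f^4 - 12 f^2 f_\sigma^2$ and self-adjointness of $\mathcal{A}$ on $\mathfrak{H}$,
\[
\int 4 f^3 \mathcal{A}f \cdot \theta^4 \sigma^k \rho \;=\; \int f^4\, \mathcal{A}(\theta^4 \sigma^k)\, \rho \;+\; \tfrac{3}{2} I^2 \;-\; 12 \int f^2 f_\sigma^2 \theta^4 \sigma^k \rho,
\]
and the last non-positive term is discarded. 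On the bulk $|\sigma|\leq 2A\sqrt{\tau}$ where $\theta = 1$, one computes $\mathcal{A}(\sigma^k) = -\tfrac{k-1}{2}\sigma^k + k(k-1)\sigma^{k-2}$; the cutoff annulus $2A\sqrt{\tau}\leq |\sigma|\leq 4A\sqrt{\tau}$ contributes only $O(e^{-A^2\tau/2})$ thanks to the Gaussian weight. The sub-leading $\sigma^{k-2}$ term is handled by splitting at $R_k = 2\sqrt{k}$: for $|\sigma|\geq R_k$ one has $k(k-1)|\sigma|^{k-2} \leq \tfrac{k-1}{4}|\sigma|^k$, and the residual integral on $|\sigma|\leq R_k$ (where $\theta = 1$ for $\tau$ large) is $\leq C_k\, \varepsilon^2 \|f\theta\|^2$ using $f^4 \leq \varepsilon^2 f^2$ pointwise. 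Summing $-\tfrac{k-1}{2} + \tfrac{k-1}{4} + \tfrac{3}{2}$ yields a net coefficient $-\tfrac{k-7}{4}$ on $I^2$.

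The remaining terms from $f_\tau$ are either nonpositive or bounded multiples of $I^2$ for $\tau$ large. The cubic $-n f^3$ gives $-4n \int f^6 \theta^4 \sigma^k \rho \leq 0$. The zeroth-order $(u^{-2}-1) f$ is bounded using $u\geq 1/2$ (from Lemma \ref{ulowbound}), contributing $\leq CI^2$. The non-local transport $-n(\int_0^\sigma f^2) f_\sigma$ is integrated by parts in $\sigma$ against $\theta^4 \sigma^k \rho$; the derivative landing on $\int_0^\sigma f^2$ produces $n f^2 \cdot f^4 \leq nM^2 I^2$, while the derivative on $\sigma^k$ or on $\rho$ produces terms of the shape $\int (\int_0^\sigma f^2)\, f^4 \theta^4 \sigma^{k\pm 1}\rho$. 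By Proposition \ref{basicderestim} the factor $|\int_0^\sigma f^2|$ is bounded on the support of $\theta$ by $4 A C_A^2 / \sqrt{\tau}$, so these contribute $\leq C(A,k,n)\, \tau^{-1/2} \cdot I^2 = o_\tau(1)\cdot I^2$. Finally the $\theta_\tau$ term is supported on the annulus with $\rho\leq e^{-A^2\tau}$. Choosing $k$ large enough that $\tfrac{k-7}{4}$ exceeds $2a + C + nM^2$, and then $\tau_2$ accordingly, completes the quadratic estimate.

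The principal difficulty is the non-local transport: integration by parts against the polynomial weight $\sigma^k$ produces coefficients that grow with $k$, and only the \emph{a priori} decay $|f| \leq C_A/\sqrt{\tau}$ makes the resulting $|\int_0^\sigma f^2|$ small enough to absorb these into the main negative term. A secondary subtlety is the conversion from the natural $\|f\theta\|^2$ estimate to the linear $\|f\theta\|$ bound required by the statement, handled by the dichotomy described in the first paragraph; once these two points are addressed the rest is book-keeping following the Filippas--Kohn template.
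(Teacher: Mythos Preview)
Your overall architecture matches the paper's, but the step converting the quadratic estimate on $(I^2)'$ into the linear inequality on $I'$ does not close. Your splitting of the sub-leading $\sigma^{k-2}$ integral at $R_k=2\sqrt{k}$ yields a contribution of the form $C_k\varepsilon^2\|f\theta\|^2$ to $(I^2)'$. After formally dividing by $2I$ this becomes $C_k\varepsilon^2\|f\theta\|^2/(2I)$, and your dichotomy cannot absorb it: in the ``complementary regime'' $I \ll \varepsilon\|f\theta\|$ the right-hand side $-aI+\varepsilon_1\|f\theta\|$ is indeed nonnegative, but that does \emph{not} make the inequality trivial, since nothing prevents $I'$ from being large there. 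Concretely, the implication
\[
2II'\le -2aI^2+B^2 \;\Longrightarrow\; I'\le -aI+B
\]
is false for $a>4$ (take $I\equiv B/4$), so no amount of shrinking $\varepsilon$ relative to $\varepsilon_1$ rescues the passage from the quadratic bound to the first-order one.

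The paper avoids this by first applying Cauchy--Schwarz to the $\sigma^{k-2}$ integral,
\[
P^2=\int f^4\theta^4\sigma^{k-2}\rho \;\le\; \Big(\int f^4\theta^4\sigma^{k}\rho\Big)^{1/2}\Big(\int f^4\theta^4\sigma^{k-4}\rho\Big)^{1/2}=I\cdot Q,
\]
and only then splitting $Q$ at scale $\delta^{-1}$; this produces a bound $P^2\le \varepsilon\delta^{2-k/2}\,I\,\|f\theta\|+\delta^2 I^2$ carrying an explicit factor of $I$, so that dividing $II_\tau$ by $I$ gives directly the desired $\varepsilon_1\|f\theta\|$ term. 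Replacing your splitting by this Cauchy--Schwarz step fixes the gap. (A smaller point: your bound on the $\sigma^{k+1}$ transport term is also off, since $|\sigma|\le 4A\sqrt{\tau}$ and $\bigl|\int_0^\sigma f^2\bigr|\le C_A/\sqrt{\tau}$ only give an $O(1)$ multiple of $I^2$; the paper instead observes that $\sigma\int_0^\sigma f^2\ge 0$ so this term has a favorable sign and can be dropped.)
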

The proof of this proposition actually shows the following refined variant which will be needed later.
\begin{cor}\label{refidifineq}
Instead of $\varepsilon_1$ we may take $\varepsilon(\tau) \delta(\tau)^{2-\frac{k}{2}}$ (if this quantity goes to zero), where $\varepsilon(\tau)$ is an upper bound for $|f|$ on $|\sigma| \leq \delta(\tau)^{-1}$ and $\delta(\tau)$ can be made sufficiently small.
\end{cor}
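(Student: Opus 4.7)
The plan is to revisit the proof of Proposition \ref{idifineq} and identify the single estimate in which the constant $\varepsilon_1$ was extracted, then show that this step tolerates a $\tau$-dependent replacement. Every other estimate in that proof --- the spectral gap produced by integrating $\mathcal{A}f$ against $f^3\theta^4\sigma^k\rho$, the absorption of the cubic $-nf^3$ and the non-local $n(\int_0^\sigma f^2)f_\sigma$ into $-aI$ using the apriori smallness $|f|\le M$, and the exponentially small boundary contributions from $\theta_\sigma$ --- is uniform in the splitting parameters and therefore passes through unchanged.

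More concretely, after differentiating $I^2=\int f^4\theta^4\sigma^k\rho\,d\sigma$, substituting the evolution equation for $f$, and integrating by parts against the polynomial weight $\sigma^k$, one obtains an inequality of the form $\tfrac{d}{d\tau}I^2\leq -2aI^2+(\text{error})+C_0 I\exp(-A^2\tau/32)$, with $a$ controlled by the spectral gap and proportional to $k$. The only error term that cannot be absorbed into $-aI^2$ arises from the inhomogeneity $(u^{-2}-1)f$ in the equation for $f$, because $u^{-2}-1$ is only small on compact sets. To handle it one splits the domain at $|\sigma|=\delta^{-1}$ exactly as in (\ref{quadleqi}): on the inner region both $|u^{-2}-1|$ and $|f|$ are bounded by $\varepsilon_1$, while on the outer region one trades polynomial weights using $\sigma^{j}\leq\delta^{k-j}\sigma^{k}$, converting excess powers of $\sigma$ into powers of $\delta$. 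A Cauchy--Schwarz split which places one copy of $f\theta$ against the polynomial-weighted remainder produces a bound of the form $\varepsilon_1\|f\theta\|\cdot I$, and dividing the $I^2$ inequality through by $I$ yields the claimed differential inequality.

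To upgrade to the refined variant in the Corollary I would allow both $\delta$ and the pointwise bound on $|f|$ in the inner region to vary with $\tau$. The inner estimate automatically becomes $\varepsilon(\tau)\|f\theta\|\cdot I$ in place of $\varepsilon_1\|f\theta\|\cdot I$. The outer estimate now pays a factor of $\delta(\tau)^{k-j}$ from the weight exchange; after the Cauchy--Schwarz split against $\sigma^{k/2}$ that matched one factor to $\|f\theta\|$ and the other to $I$, the net power of $\delta(\tau)$ in front of $\|f\theta\|$ is $2-\tfrac{k}{2}$, giving exactly the stated prefactor $\varepsilon(\tau)\delta(\tau)^{2-\tfrac{k}{2}}$. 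All remaining ingredients --- the spectral gap $-aI$, the $M^2I$ absorption of the cubic and non-local nonlinearities, and the exponentially small $\theta_\sigma$ terms --- are insensitive to the choice of splitting parameters, so provided $\varepsilon(\tau)\delta(\tau)^{2-\tfrac{k}{2}}\to 0$ the spectral gap is preserved and the refined inequality follows for $\tau$ sufficiently large.

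The main obstacle is purely bookkeeping: keeping careful track of the exponents appearing in the weight exchange $\sigma^{j}\leq\delta^{k-j}\sigma^{k}$ and in the Cauchy--Schwarz step in order to confirm the precise power $2-\tfrac{k}{2}$, and checking that allowing $\delta$ and $\varepsilon$ to be functions of $\tau$ does not generate any new time-derivative contributions. The latter is free, since $\delta(\tau)$ and $\varepsilon(\tau)$ enter only through pointwise bounds on the integrand and not through the cut-off $\theta$ or the integration domain itself; so differentiating $I^2$ in $\tau$ proceeds exactly as before, and the $\tau$-dependent parameters play only a passive role in the splitting.
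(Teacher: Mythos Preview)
Your identification of the mechanism --- split the offending integral at $|\sigma|=\delta^{-1}$, use the pointwise bound $|f|\le\varepsilon$ on the inner region, exchange polynomial weights on the outer region, and combine via Cauchy--Schwarz/H\"older to produce a factor $\varepsilon\,\delta^{2-k/2}I\|f\theta\|$ --- matches exactly what the paper does in Lemma~\ref{P}, and your observation that allowing $\varepsilon,\delta$ to depend on $\tau$ creates no new derivative terms is correct.

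However, you have misidentified \emph{which} term requires this treatment. The coefficient $\varepsilon_1$ does not come from the nonlinearity $(u^{-2}-1)f$. In the paper's proof of Proposition~\ref{idifineq}, that term is disposed of in one line: $|u^{-2}-1|\le C$ on the support of $\theta$, so $\int(u^{-2}-1)(f\theta)^4\sigma^k\rho \le C I^2$, which is absorbed into the spectral gap by taking $k$ large. The term that actually forces the splitting is
\[
\frac{k(k-1)}{4}\int (f\theta)^4\sigma^{k-2}\rho \;=\; \frac{k(k-1)}{4}P^2,
\]
which arises from the \emph{linear} operator $\mathcal{A}$ after two integrations by parts against the polynomial weight $\sigma^k$ (see the lemma immediately following equation~(\ref{ieq1})). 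This $P^2$ carries the wrong power of $\sigma$ and cannot be absorbed directly; Lemma~\ref{P} is precisely the splitting you describe, applied to $P^2$. Once you redirect your argument to the correct term, the rest of your proposal goes through verbatim and agrees with the paper.
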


We now consider projections on eigenspaces. The operator $\mathcal{A}$ is self-adjoint on the Hilbert space $\mathfrak{H}=L^2(\rho\, d\sigma)$. Its spectrum consists only of eigenvalues
\begin{gather*}
\text{$-\lambda_m=\frac{1}{2}-\frac{m}{2}$ for $m=0,1,2, \dots$},     
\end{gather*}
and each eigenvalue $-\lambda_m$ has a one dimensional eigenspace $\mathfrak{H}_m$ spanned by the \\{$m$-th} modified Hermite polynomial $h_m(\sigma)=c_m H_m(\frac{\sigma}{2})$, normalized to have unit norm. Let $\mathfrak{H}_+$, $\mathfrak{H}_0$, $\mathfrak{H}_-$ denote the subspaces corresponding to positive, null and negative eigenvalues. Similarly, let $\mathfrak{H}_{\geq m}$, $\mathfrak{H}_m$, $\mathfrak{H}_{\leq m}$ be the subspaces spanned by $h_j$ for $j\geq m$, $j=m$ and $j \leq m$ respectively. For any symbol ${\alpha \in \{ +, -, m, \geq m, \leq m \}}$, let $\pi_\alpha$ be the projection on $\mathfrak{H}_\alpha$.

Define the quantities
\begin{gather*}
    x(\tau)^2 = \| \pi_+(f \eta) \|^2 =  \langle f\eta, h_0 \rangle^2\\
    y(\tau)^2 = \| \pi_0(f \eta) \|^2 =  \langle f\eta, h_1 \rangle^2\\
    z(\tau)^2 = \| \pi_-(f \eta) \|^2 =  \sum_{m=2}^\infty \langle f\eta, h_m \rangle^2\\
    \zeta(\tau)= z(\tau) + I(\tau)
\end{gather*}
where $\eta(\tau, \sigma)= \chi(\frac{\sigma}{A \sqrt{\tau}})$ as before. By taking $a>\frac{1}{2}$ in Proposition \ref{idifineq} and using inequality \ref{quadleqi} we get the following Proposition. The proof is simple once we have the previous estimates and is included at the end of this section.
\begin{prop} \label{syswexp}
    For any $\varepsilon$ and $\tau \geq \tau_1, \tau_2$ from Propositions \ref{feqlocest} and \ref{idifineq}, we have the system of inequalities
    \begin{gather}\label{odesyswexp}
    \frac{dx}{d\tau}  \geq \frac{1}{2}x - \varepsilon(x+y+\zeta)- C_0\exp(-cA^2\tau) \nonumber \\
    |\frac{dy}{d\tau} | \leq  \varepsilon(x+y+\zeta) + C_0\exp(-cA^2\tau)\\
    \frac{d \zeta}{d\tau}  \leq -\frac{1}{2}\zeta + \varepsilon(x+y+\zeta)+C_0\exp(-cA^2\tau) \nonumber
\end{gather}
for some $C_0$ and some universal $c$. 
\end{prop}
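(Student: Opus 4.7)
The plan is to project the localized equation $(f\eta)_\tau = \mathcal{A}(f\eta) + F + E$ from Proposition~\ref{feqlocest} onto the three spectral blocks of the self-adjoint operator $\mathcal{A}$ on $\mathfrak{H}$, and then merge the $I$-estimate of Proposition~\ref{idifineq} into the stable-block estimate to produce the $\zeta$-inequality. Since $\mathcal{A}h_0 = \tfrac{1}{2}h_0$, $\mathcal{A}h_1 = 0$, and the spectrum of $\mathcal{A}|_{\mathfrak{H}_-}$ is contained in $(-\infty,-\tfrac{1}{2}]$, the eigenvalue structure immediately produces the desired signs; the real work is to bound $\|F\|$ and $\|E\|$ by $\varepsilon(x+y+\zeta) + C_0\exp(-cA^2\tau)$.

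Writing $a_m(\tau) = \langle f\eta, h_m\rangle$ and testing against $h_0$ and $h_1$, self-adjointness gives
\begin{equation*}
a_0'(\tau) = \tfrac{1}{2}a_0 + \langle F+E, h_0\rangle, \qquad a_1'(\tau) = \langle F+E, h_1\rangle.
\end{equation*}
Cauchy--Schwarz with $\|h_m\|_{\mathfrak{H}}=1$, interpreted as a Dini inequality at zeros of $a_0, a_1$, yields $\tfrac{dx}{d\tau} \geq \tfrac{1}{2}x - \|F\| - \|E\|$ and $|\tfrac{dy}{d\tau}| \leq \|F\| + \|E\|$ for $x = |a_0|$, $y = |a_1|$. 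For the stable block, projecting the equation and using the spectral bound gives
\begin{equation*}
\tfrac{1}{2}\tfrac{d}{d\tau}z^2 = \langle \mathcal{A}\pi_-(f\eta), \pi_-(f\eta)\rangle + \langle F+E, \pi_-(f\eta)\rangle \leq -\tfrac{1}{2}z^2 + z(\|F\|+\|E\|),
\end{equation*}
hence $\tfrac{dz}{d\tau} \leq -\tfrac{1}{2}z + \|F\| + \|E\|$.

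To bring $\|F\|$ into the stated form, I fix the target $\varepsilon>0$ and apply Proposition~\ref{feqlocest} with its internal parameter small enough that $C_0\varepsilon_{\mathrm{prop}} \leq \varepsilon^2/4$, which produces a constant $C_\varepsilon$ in front of $\int f^4\theta^4\rho$. I then invoke (\ref{quadleqi}) with the pair $(\varepsilon_q,\delta_q)$ chosen so that $C_\varepsilon\varepsilon_q^2 \leq \varepsilon^2/4$ and $C_\varepsilon\delta_q^k \leq \varepsilon^2/4$, which bounds $C_\varepsilon\int f^4\theta^4\rho$ by $\tfrac{\varepsilon^2}{4}\int f^2\theta^2\rho + \tfrac{\varepsilon^2}{4}I^2$. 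Because $\theta-\eta$ is supported in $|\sigma|\geq A\sqrt\tau$ where the Gaussian weight combined with $|f|\leq M$ yields $\int f^2(\theta^2-\eta^2)\rho \leq CM^2\exp(-A^2\tau/8)$, and $\int f^2\eta^2\rho = x^2+y^2+z^2$, assembling everything gives $\|F\|^2 \leq \varepsilon^2(x^2+y^2+z^2+I^2) + C_0\exp(-cA^2\tau)$. Using $\sqrt{x^2+y^2+z^2+I^2}\leq x+y+\zeta$ I obtain $\|F\|\leq \varepsilon(x+y+\zeta) + C_0\exp(-cA^2\tau)$; the analogous bound on $\|E\|$ is immediate from Proposition~\ref{feqlocest}.

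Finally, applying Proposition~\ref{idifineq} with $a=\tfrac{1}{2}$ and $\varepsilon_1=\varepsilon$, and noting that the same tail argument as above gives $\|f\theta\| \leq \|f\eta\| + C\exp(-cA^2\tau) \leq x+y+z + C\exp(-cA^2\tau)$, I conclude $\tfrac{dI}{d\tau}\leq -\tfrac{1}{2}I + \varepsilon(x+y+\zeta) + C_0\exp(-cA^2\tau)$. Adding this to the $z$-inequality and renaming $\varepsilon$ to absorb the factor of $2$ produces the claimed $\tfrac{d\zeta}{d\tau} \leq -\tfrac{1}{2}\zeta + \varepsilon(x+y+\zeta) + C_0\exp(-cA^2\tau)$. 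The main obstacle is not any individual estimate but the order of parameter choices: the target $\varepsilon$ fixes the internal parameter in Proposition~\ref{feqlocest}, which fixes $C_\varepsilon$, which then fixes $(\varepsilon_q,\delta_q)$ in (\ref{quadleqi}) and $\varepsilon_1$ in Proposition~\ref{idifineq}, and through them the common start-time $\tau$; one must verify that the various truncation and tail errors all decay like $\exp(-cA^2\tau)$ with a single universal $c$ so that they fit under one exponential term.
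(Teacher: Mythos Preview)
Your proof is correct and follows essentially the same approach as the paper's: project the localized equation onto the spectral blocks of $\mathcal{A}$, control $\|F\|+\|E\|$ by choosing the parameters in Proposition~\ref{feqlocest} and inequality~(\ref{quadleqi}) in the right order, pass between $\|f\theta\|$ and $\|f\eta\|$ via the Gaussian tail estimate, and then add the $I$-inequality from Proposition~\ref{idifineq} to the $z$-inequality to obtain the $\zeta$-bound. The only cosmetic difference is that the paper takes $a>\tfrac{1}{2}$ in Proposition~\ref{idifineq} rather than $a=\tfrac{1}{2}$, but either choice yields $\tfrac{dI}{d\tau}\leq -\tfrac{1}{2}I+\cdots$.
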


We now present the following adaptation of the well known ODE lemma of Merle and Zaag \cite{merlezaag}. The proof is given in the Appendix.
\begin{lem} \label{myMZ}
    Let $x, y,z$ be non-negative functions of $\tau$. Assume for any $\varepsilon$, there exists $\tau_\varepsilon$ such that for $\tau \geq \tau_\varepsilon$ we have
    \begin{gather}
    \frac{dx}{d\tau}  \geq \frac{1}{2}x - \varepsilon(x+y+z)- B\exp(-b\tau) \nonumber \\
    |\frac{dy}{d\tau} | \leq  \varepsilon(x+y+z) + B\exp(-b\tau)\\
    \frac{d z}{d\tau}  \leq -\frac{1}{2}z + \varepsilon(x+y+z)+B\exp(-b\tau) \nonumber
\end{gather}
where $B$ and $b \gg 1$ are fixed. Then exactly one of the following holds: $(1)$ We have $x \geq Ce^{\delta \tau}$ for some $\delta$; $(2)$ We have $x+z = o(y)$, in which case $ c_\delta e^{-\delta \tau} \leq y \leq C_\delta e^{\delta \tau}$ for any $\delta$; $(3)$ we have $x+y \leq C(z + \exp(-b\tau))$ for some $C$, in which case $x+y+z \leq C_\delta\exp(-(\frac{1}{2}-\delta)\tau)$ for any $\delta$.
\end{lem}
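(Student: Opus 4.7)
The proof adapts the scheme of Merle and Zaag \cite{merlezaag} with modifications to absorb the exponential error $B e^{-b\tau}$. The essential observation is that since $b \gg 1$, this term is much smaller than each of the three natural rates $e^{\tau/2}$, $1$, $e^{-\tau/2}$ that appear, so once the trichotomy is pinned down the error is harmless; the only care needed is to prevent $B e^{-b\tau}$ from spoiling the bootstraps that force the three cases to be mutually exclusive.

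The first step is to isolate case $(1)$. Consider the ratio $r(\tau) = x / (y + z + B e^{-b\tau})$ (which is well defined once the denominator is positive, which it is) and estimate $r'$ using the three differential inequalities. A direct computation shows that when $r$ exceeds a threshold $R(\varepsilon)$, the dominant contribution is $r' \geq (\tfrac{1}{2} - C\varepsilon) r - C\varepsilon$ with $C$ universal. Fixing $\varepsilon$ small, exactly two alternatives arise: either $r(\tau) \leq R(\varepsilon)$ for all sufficiently large $\tau$, so that $x = O(y + z + e^{-b\tau})$, or at some $\tau^{*}$ the threshold is crossed and $r$ grows at rate arbitrarily close to $\tfrac{1}{2}$; multiplying back by the denominator gives $x \geq C e^{\delta \tau}$, which is case $(1)$.

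Assume henceforth $x = O(y + z + e^{-b\tau})$. Now split according to the relative sizes of $y$ and $z$. If there is a sequence $\tau_k \to \infty$ with $y(\tau_k) \leq 2(z(\tau_k) + B e^{-b\tau_k})$, then by inserting $x = O(y+z+e^{-b\tau})$ into the inequality $|y'| \leq \varepsilon(x+y+z) + B e^{-b\tau}$ one propagates $y \leq C(z + e^{-b\tau})$ forward in time; here the freedom to shrink $\varepsilon$ is used to close the estimate. Feeding the resulting bound $x + y \leq C(z + e^{-b\tau})$ into the inequality for $z$ gives $z' \leq -(\tfrac{1}{2} - C\varepsilon) z + C B e^{-b\tau}$, which integrates, using $b \gg \tfrac{1}{2}$, to $z \leq C_\delta e^{-(1/2 - \delta)\tau}$; this is case $(3)$. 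Otherwise $y(\tau) > 2(z(\tau) + B e^{-b\tau})$ for all large $\tau$, and combined with the first step this gives $x + z = o(y)$, which is case $(2)$; the two-sided bound on $y$ then follows from $|y'|/y \leq C\varepsilon + o(1)$ and Gronwall.

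The main technical obstacle is the two persistence statements in the argument above: that $r \leq R(\varepsilon)$ at one instant forces the same inequality at every later instant (possibly with a larger constant), and similarly that $y \leq C(z + e^{-b\tau})$ propagates forward. Both amount to ODE comparison arguments that must choose $\varepsilon$ small \emph{after} the regime has been identified and then track the error $B e^{-b\tau}$ consistently; this is exactly where the assumption $b \gg 1$ is used, so that $B e^{-b\tau}$ can be absorbed into the stable-mode term $e^{-(1/2-\delta)\tau}$ in case $(3)$ and into the $\varepsilon$-controlled perturbations in cases $(1)$ and $(2)$.
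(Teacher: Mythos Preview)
Your first step, isolating case $(1)$ via the ratio $r = x/(y+z+Be^{-b\tau})$, is essentially the paper's Claim~1 (which uses the difference $\beta = x - 4\varepsilon(y+z)$ instead) and is fine. The second step, however, has a real gap. You attempt to propagate the inequality $y \leq C(z + e^{-b\tau})$ forward in time from a sequence $\tau_k$, but this cannot work with the given hypotheses: you only possess an \emph{upper} bound $z' \leq -\tfrac{1}{2}z + \cdots$, so $z$ may decay as fast as it likes while $y$, being neutral, stays nearly constant. Concretely, if $y(\tau_k) \leq 2z(\tau_k)$ and thereafter $z$ decays like $e^{-\tau/2}$, the ratio $y/z$ blows up; your sequence does not help unless the gaps $\tau_{k+1}-\tau_k$ are bounded, which is not assumed. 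The same issue bites in your case~$(2)$ branch: the alternative $y > 2(z + Be^{-b\tau})$ for all large $\tau$ yields only $z \leq y/2$, not $z = o(y)$, so the claimed conclusion $x+z = o(y)$ is not justified from a single fixed threshold.

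The paper (following Merle--Zaag) fixes both problems by running the persistence argument in the \emph{opposite} direction: one sets $\gamma = \alpha\varepsilon\, y - z - \tfrac{10B}{b}e^{-b\tau}$ and checks that $\gamma = 0 \Rightarrow \gamma' > 0$, so once $z < \alpha\varepsilon\, y - O(e^{-b\tau})$ it stays that way. This direction is compatible with the available one-sided bound on $z'$. The dichotomy then reads: either $\gamma$ becomes positive for every small $\varepsilon$ (hence $z = o(y)$, case~$(2)$), or $\gamma \leq 0$ for all $\tau$ for some fixed $\varepsilon$ (hence $y \leq \tfrac{1}{\alpha\varepsilon}(z + O(e^{-b\tau}))$, case~$(3)$). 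Mutual exclusion of these two alternatives across different $\varepsilon$ follows because case~$(2)$ forces slow decay of $y$ (rate $\lesssim \varepsilon$) while case~$(3)$ forces decay of $z$, and hence of $y$, at rate close to $\tfrac{1}{2}$.
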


Using our ODE Lemma, we can analyze the projections of $f \eta$. Note that, by making $c$ smaller, we may assume the coefficient $C_0$ in the exponential term corresponding to $B$ in our ODE lemma is controlled independent of the size of the cut-off $A$. Note that the bound $|f|\leq \frac{C}{\sqrt{\tau}}$ for ${|\sigma|\leq 4A\sqrt{\tau}}$ implies
\begin{gather*}
    \| f\eta \|^2 = x^2 + y^2 + z^2 \leq \frac{C}{\tau} \to 0\\
    I^2 = \int f^4 \theta^4 \sigma^k \rho \leq \frac{C_k}{\tau^2} \to 0
\end{gather*}
Therefore the first case in Lemma \ref{myMZ} cannot happen. Applying the ODE lemma, we get the following.

\begin{prop} \label{MZ}
For any $A$ large enough, there exists $\tau_3$ with the following property: For $\tau \geq \tau_3$, we either have $x+\zeta = o(y)$, or $x+y \leq C \left( \zeta+  \exp(-cA^2\tau) \right)$. In the second case, ${\| f\eta \|+I \leq C \exp \left( - \alpha \tau \right)}$ for any $\alpha < \frac{1}{2}$.
\end{prop}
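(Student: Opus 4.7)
The plan is to apply the modified Merle--Zaag ODE Lemma \ref{myMZ} to the system in Proposition \ref{syswexp}, with the identification $z \leftrightarrow \zeta$ and $b \leftrightarrow cA^2$. The first step is just to verify the hypotheses: Proposition \ref{syswexp} supplies the three inequalities with any $\varepsilon>0$ available for $\tau$ large, and the constant $B = C_0$ in the exponential term can be taken independent of $A$ (as noted in the paragraph before the statement, by replacing $c$ with a smaller universal constant). Thus, provided we choose $A$ large enough that $cA^2 \gg 1$, the ODE lemma is directly applicable and yields one of its three dichotomy cases.

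The second step is to exclude case (1) of Lemma \ref{myMZ}. This is where the a priori estimates pay off: Proposition \ref{basicderestim} gives $|f| \leq C_A/\sqrt{\tau}$ on $|\sigma| \leq 4A\sqrt{\tau}$, so integrating $f^2\eta^2\rho$ against the Gaussian weight on the cutoff support produces
\begin{equation*}
    \|f\eta\|^2 = x^2 + y^2 + z^2 \leq \frac{C}{\tau},
\end{equation*}
and similarly the extra weight $\sigma^k$ in $I$ is absorbed by the Gaussian to give $I^2 \leq C_k/\tau^2$. In particular $x(\tau) \to 0$, which is incompatible with $x \geq Ce^{\delta\tau}$. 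Hence only cases (2) and (3) survive, and these are exactly the two alternatives stated in the proposition: $x+\zeta = o(y)$ or $x+y \leq C(\zeta+\exp(-cA^2\tau))$.

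The third step is to translate the conclusion of case (3) into the claimed bound on $\|f\eta\|+I$. By hypothesis of case (3) and the decay conclusion of the ODE lemma, we have $x+y+\zeta \leq C_\delta\exp(-(\tfrac{1}{2}-\delta)\tau)$ for every $\delta>0$. Since $\|f\eta\|^2 = x^2+y^2+z^2 \leq (x+y+\zeta)^2$ and $I \leq \zeta \leq x+y+\zeta$, we immediately obtain $\|f\eta\| + I \leq C_\alpha \exp(-\alpha\tau)$ for any $\alpha < 1/2$, choosing $\delta = 1/2 - \alpha$.

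I do not anticipate a serious obstacle here: the proposition is essentially a packaging of the ODE lemma together with the trivial a priori bound that rules out exponential growth. The one quantitative subtlety to watch is the dependence of the exponential error constant $B = C_0$ on $A$; this must be kept universal (independent of $A$) so that the hypothesis $b \gg 1$ of Lemma \ref{myMZ} can be arranged purely by choosing $A$ large, and this is why the remark preceding the proposition about making $c$ smaller at the cost of universal constants is invoked.
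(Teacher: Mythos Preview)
Your proposal is correct and follows essentially the same approach as the paper: apply Lemma \ref{myMZ} to the system of Proposition \ref{syswexp} (with $z\leftrightarrow\zeta$, $b=cA^2$, and $B=C_0$ taken independent of $A$ by shrinking $c$), rule out case (1) via the a priori decay $\|f\eta\|^2\leq C/\tau$ and $I^2\leq C_k/\tau^2$, and read off the dichotomy together with the exponential decay in case (3). The paper's argument is exactly this, presented in the paragraph immediately preceding the proposition.
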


We can also combine our ODE lemma with the extra structure of the modes to get the following lower bound on the decay rate of $\| f\eta \|$.
\begin{prop}
    Assume that for some (and hence every large enough) $A \gg 1$, there exists a sequence of times $\tau_i \to \infty$ and some constant $C_A$ such that
    \begin{gather} 
        \| f(\tau_i, \sigma) \chi(\frac{\sigma}{A\sqrt{\tau_i}}) \| \geq C_A \exp(-cA^2 \tau_i)
    \end{gather}
    Then there exists $A_0$ and $\Lambda$ such that if $A \geq A_0$, 
    \begin{gather}\label{atmostexpdecay}
        \| f\eta \|(\tau) = \| f(\tau, \sigma) \chi(\frac{\sigma}{A\sqrt{\tau}}) \| \geq C_0 \exp(-\Lambda \tau)
    \end{gather}
    for $\tau \geq \tau_*$ and some $C_0$. We refer to \ref{atmostexpdecay} as Decay Condition.
\end{prop}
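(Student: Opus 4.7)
The plan is to apply the dichotomy in Proposition \ref{MZ} and treat the two cases separately. First, if case (a) of Proposition \ref{MZ} holds, i.e.\ $x+\zeta=o(y)$, the Merle-Zaag-type Lemma \ref{myMZ} gives $y(\tau)\geq c_\delta \exp(-\delta\tau)$ for every $\delta>0$, and so $\|f\eta\|(\tau)\geq y(\tau)$ already yields the Decay Condition with any positive $\Lambda$. The substantive case is (b), where $x+y\leq C(\zeta+\exp(-cA^2\tau))$, so that $\|f\eta\|$ decays exponentially; here one must convert the sequential lower bound into a uniform one via a mode-by-mode ODE argument.

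In case (b), expand $f\eta=\sum_{m\geq 0}v_m(\tau)h_m(\sigma)$ in the Hermite basis. Each mode $v_m=\langle f\eta,h_m\rangle$ satisfies $\dot v_m=\lambda_m v_m+R_m$ with $\lambda_m=\tfrac12-\tfrac m2$, where the residual $R_m=\langle F+E,h_m\rangle$ is bounded by $\|F\|+\|E\|\leq C\sqrt{\varepsilon}\|f\eta\|+C\exp(-cA^2\tau)$ via Proposition \ref{feqlocest}. The sequential hypothesis combined with Parseval gives $\sum_m v_m^2(\tau_i)\geq C_A^2\exp(-2cA^2\tau_i)$. The modes $m\geq M^\ast:=\lceil 2cA^2\rceil+2$ are damped by their linear rates $e^{\lambda_m s}$, so their contribution to this sum is $O(\exp(-(M^\ast-1)\tau_i))$, strictly smaller than the hypothesized rate for $\tau_i$ large; hence at least half of the $L^2$-mass at $\tau_i$ lies in modes $m<M^\ast$. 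A pigeonhole argument on the finite set $\{0,1,\dots,M^\ast-1\}$ then selects a single index $m_0<M^\ast$ and a subsequence (still denoted $\tau_i$) with $|v_{m_0}(\tau_i)|\geq c_A'\exp(-cA^2\tau_i)$.

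Finally, to propagate the lower bound forward, fix a $\tau_{i_0}$ in the subsequence and apply the variation-of-constants formula to $v_{m_0}$. Using the upper bound $\|f\eta\|(s)\leq C_\delta e^{-(1/2-\delta)s}$ from case (b) of Proposition \ref{MZ}, and choosing $\varepsilon$ small and $A$ large enough that the spectral gap $cA^2-|\lambda_{m_0}|$ is substantial, one shows that the forcing integral is at most half of the homogeneous term, so that $|v_{m_0}(\tau)|\geq \tfrac12|v_{m_0}(\tau_{i_0})|e^{\lambda_{m_0}(\tau-\tau_{i_0})}$ for $\tau\geq\tau_{i_0}$. Since $\|f\eta\|(\tau)\geq|v_{m_0}(\tau)|$, this yields the Decay Condition \ref{atmostexpdecay} with some $\Lambda$ of order $cA^2$ and $\tau_\ast:=\tau_{i_0}$. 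The principal obstacle is the careful bookkeeping in bounding the forcing integral against the homogeneous decay: since $|\lambda_{m_0}|$ can be comparable to $cA^2$ while $\|f\eta\|$ itself decays at a rate close to $1/2$, it is crucial to exploit the smallness of $\varepsilon$ and the largeness of $A$ (relative to $|\lambda_{m_0}|$) to keep the error strictly subordinate to the main term.
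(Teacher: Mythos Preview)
Your treatment of case (a) is fine, but case (b) has a genuine gap in two places, and both stem from the same issue: you are trying to control individual modes $v_m$ via variation of constants, but the forcing $R_m=\langle F+E,h_m\rangle$ is only bounded by $\|F+E\|$, and in case (b) all you know is $\|f\eta\|+I\le C_\delta e^{-(1/2-\delta)\tau}$. From Proposition~\ref{feqlocest} this gives $\|F+E\|\le C e^{-(1/2-\delta)\tau}$ (the $C_\varepsilon I$ term carries no small factor). Hence for $m\ge M^\ast$ the Duhamel integral $\int e^{\lambda_m(\tau-s)}R_m(s)\,ds$ is of order $e^{-(1/2-\delta)\tau}$, not $e^{-\lambda_m\tau}$: the high modes do \emph{not} decay at their linear rate, and the pigeonhole step fails --- the entire $L^2$ mass at $\tau_i$ could sit in modes $m\ge M^\ast$. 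The same problem recurs in your propagation step: for $m_0\ge 2$ the homogeneous term decays like $e^{-(m_0-1)\tau/2}$, while the forcing integral is of size $\sqrt{\varepsilon}\,e^{-(1/2-\delta)\tau}$ plus an $I$-contribution with no $\varepsilon$; the ratio blows up and the lower bound cannot persist for all $\tau$.

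The paper circumvents this by never isolating a single mode. It fixes $m$ with $\lambda_m$ large, sets $P=e^{\lambda_m\tau}\|\pi_{<m}(f\eta)\|$, $Q=e^{\lambda_m\tau}\|\pi_m(f\eta)\|$, $S=e^{\lambda_m\tau}(\|\pi_{>m}(f\eta)\|+I)$, and re-applies the modified Merle--Zaag Lemma~\ref{myMZ} to $(P,Q,S)$. The point is that all high modes \emph{together with $I$} are packaged into the single stable direction $S$, so the differential inequality for $S$ absorbs precisely the forcing you cannot control mode-by-mode. Case~3 of the lemma would then force $\|f\eta\|\le Ce^{-\lambda_m\tau}$, contradicting the sequential hypothesis; hence case~1 or~2 holds and yields the uniform lower bound. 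To make your approach work you would need an a priori bound $I=o(\|f\eta\|)$ or the improved nonlinearity estimate $\|F\|\le Ce^{-3\beta\tau/2}$ from Section~\ref{exp}, but the latter already presupposes the Decay Condition you are trying to prove.
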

\begin{rem}
    The above Proposition allows us to replace the exponentially small error terms with $\varepsilon\| f\eta \|$ and use the usual Merle-Zaag lemma. At first glance, this may seem to simplify things. However, due to presence of $I$ in $\zeta = z+I$, even with the stronger conclusion $x+y\leq \varepsilon\zeta$ we will need new techniques to control the non-dominant directions in $z$. Our arguments would not really change with this stronger bound. The only important case is case 2 in our ODE lemma, which is identical to Merle-Zaag's result. We will however require $A \gg \Lambda$ from now on. Note that since $\Lambda$ is fixed, we can pick $A$ at the beginning and fix it throughout.
\end{rem}
\begin{proof}
Fix a large enough $A$ so that the assumption holds, and choose $m$ large such that $\lambda_m=\frac{m-1}{2} \gg 2cA^2$. 
Define
\begin{gather*}
    p(\tau)^2 = \| \pi_{< m}(f \eta) \|^2 =  \sum_{k=0}^{m-1}\langle f\eta, h_k \rangle^2\\
    q(\tau)^2 = \| \pi_{m}(f \eta) \|^2 =  \langle f\eta, h_{m} \rangle^2\\
    r(\tau)^2 = \| \pi_{>m}(f \eta) \|^2 =  \sum_{k=m+1}^\infty \langle f\eta, h_k \rangle^2
\end{gather*}
It is easy to see (c.f. proof of Proposition \ref{syswexp}) that for any $\varepsilon$ these quantities eventually satisfy
\begin{gather*}
    \frac{dp}{d\tau}  \geq \frac{m-2}{2}p - \varepsilon(p+q+r) - \varepsilon I- C_0 \exp(-cA^2 \tau) \\
    |\frac{dq}{d\tau} -\frac{m-1}{2}q| \leq  \varepsilon(p+q+r) + \varepsilon I+ C_0 \exp(-cA^2 \tau) \\
    \frac{dr}{d\tau} \leq -\frac{m}{2}r + \varepsilon(p+q+r) + \varepsilon I + C_0 \exp(-cA^2 \tau)
\end{gather*}

Now let $P(\tau) = e^{\frac{m-1}{2} \tau}p$, define $Q$ and $R$ similarly, and let $J = e^{\frac{m-1}{2}\tau}I$. So if $a$ in Proposition \ref{idifineq} is chosen large enough to ensure $-a + \lambda_m \leq -\frac{1}{2}$ and $m$ is large enough so that $b=\lambda_m -cA^2 \gg 1$, we have the inequality
\begin{gather*}
    \frac{d}{d\tau} J = e^{\frac{m-1}{2}\tau}(\frac{d}{d\tau}I + \frac{m-1}{2}I ) \leq -\frac{1}{2} J + \varepsilon e^{\frac{m-1}{2}\tau} \| f\eta \| +C_0 \exp(-b \tau)
\end{gather*} 
Therefore, $P, Q$ and $S=R+J$ satisfy the conditions of our ODE lemma.

If case 3 of the ODE lemma holds, then we will have
\begin{gather*}
    e^{\frac{m-1}{2}\tau}\| f\eta \| \leq P+Q+S \leq C \exp(-\frac{1}{10}\tau) \Rightarrow \| f\eta \| \leq Ce^{-\frac{m-1}{2}\tau}
\end{gather*}
But this is contradicts the assumption. Hence we must have either case 1 or case 2, which imply a lower bound on $\| f\eta \|$. Since we have a lower bound for a particular choice of $A$, the same bound holds for all larger $A$ as well.

\end{proof}

If we have exponential decay in Proposition \ref{MZ} and the Decay Condition mentioned above, we can say more about the decay rate.
\begin{prop} \label{MZ2}
Assume the Decay condition \ref{atmostexpdecay} holds and $x+y \leq C( \zeta + \exp(-cA^2\tau)$ in Proposition \ref{MZ}, and assume $A$ in Proposition \ref{idifineq} is large enough (compared to $\Lambda$ in the Decay Condition \ref{atmostexpdecay}). Let
$$\lambda = \sup \{ \alpha \geq 0 \, : \, \| f\eta \|+I \leq C_\alpha \exp\left( -\alpha \tau\right) \,\, \text{if} \,\, \tau \geq \tau_\alpha\,, \,\, \text{for some} \,\, C_\alpha \,\, \text{and} \,\, \tau_\alpha \}$$
Then $\lambda = \lambda_m$ for some $m \geq 2$. Moreover,
\begin{gather*}
    \| \pi_{< m} (f\eta)\| \leq C \left( \| \pi_{\geq m} (f\eta)\| +I + \exp(-cA^2 \tau)\right)\\
    \|f\eta\| +I \leq C \exp \left( - (\lambda_m-\delta)\tau \right) \,\, \text{for any} \,\,\, \delta
\end{gather*}
\end{prop}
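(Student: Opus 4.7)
The strategy is to iterate the modified Merle--Zaag Lemma \ref{myMZ} with a three-way eigenspace decomposition centered at a variable integer $m \geq 2$. Since case 2 of Proposition \ref{MZ} already furnishes $\lambda \geq \lambda_2 = \tfrac{1}{2}$ and the Decay Condition (\ref{atmostexpdecay}) forces $\lambda \leq \Lambda < \infty$, only finitely many levels need to be considered. Fix $A$ at the outset so large that $cA^2 \gg \Lambda$, and choose the parameter $a$ in Proposition \ref{idifineq} to be at least $\Lambda + \tfrac{1}{2}$, so that all exponential error rates produced below remain bounded away from the relevant eigenvalues.

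For each $m$, let $p = \|\pi_{<m}(f\eta)\|$, $q = \|\pi_m(f\eta)\|$, $r = \|\pi_{>m}(f\eta)\|$ and introduce the rescaled quantities $P = e^{\lambda_m \tau} p$, $Q = e^{\lambda_m \tau} q$, $R = e^{\lambda_m \tau} r$, $J = e^{\lambda_m \tau} I$, and $S = R + J$. Projecting $(f\eta)_\tau = \mathcal{A}(f\eta) + F + E$ onto the three subspaces and using the eigenvalue gap $\lambda_m - \lambda_{m-1} = \lambda_{m+1} - \lambda_m = \tfrac{1}{2}$, we obtain
\begin{align*}
P' &\geq \tfrac{1}{2} P - e^{\lambda_m \tau}\|F+E\|, \\
|Q'| &\leq e^{\lambda_m \tau}\|F+E\|, \\
R' &\leq -\tfrac{1}{2} R + e^{\lambda_m \tau}\|F+E\|.
\end{align*}
Combining Propositions \ref{feqlocest}, \ref{idifineq} and Corollary \ref{refquadleqi} yields $\|F+E\| \leq \varepsilon(\|f\eta\| + I) + C_0 e^{-cA^2 \tau}$ for any prescribed $\varepsilon$, together with an analogous decay for $J$. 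Since $e^{\lambda_m \tau}(\|f\eta\| + I) \leq \sqrt{2}\,(P+Q+S)$, the triple $(P,Q,S)$ then satisfies the hypotheses of Lemma \ref{myMZ} with exponential-error rate $b = cA^2 - \lambda_m \gg 1$, uniformly across the iteration.

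Case 1 of Lemma \ref{myMZ} is excluded at every level: from $\lambda \geq \lambda_m$ we have $p \leq \|f\eta\| \leq C_{\delta'} e^{-(\lambda_m - \delta')\tau}$, so $P \leq C_{\delta'} e^{\delta'\tau}$ for every $\delta' > 0$, contradicting any fixed lower bound $P \geq C e^{\delta\tau}$. If case 3 occurs at level $m$, then $P + Q + S \leq C_\delta e^{-(1/2-\delta)\tau}$ translates back to $\|f\eta\| + I \leq C_\delta e^{-(\lambda_{m+1} - \delta)\tau}$, hence $\lambda \geq \lambda_{m+1}$, and we repeat with $m$ replaced by $m+1$. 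Since $\lambda \leq \Lambda < \infty$, this process must terminate in case 2 at some $m^* \geq 2$, forcing $\lambda = \lambda_{m^*}$.

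Case 2 at level $m^*$ gives $P + S = o(Q)$ and $Q \leq C_\delta e^{\delta\tau}$ for any $\delta > 0$. The first estimate (with the exponential tail term inherited from Lemma \ref{myMZ}) produces
\[
\|\pi_{<m^*}(f\eta)\| \leq C\bigl( \|\pi_{\geq m^*}(f\eta)\| + I + e^{-cA^2 \tau} \bigr),
\]
while the second, together with $P, R, J = o(Q)$, gives $\|f\eta\| + I \leq C_\delta e^{-(\lambda_{m^*} - \delta)\tau}$. The main technical obstacle is to ensure the exponential errors in the ODE system stay truly negligible across all iteration levels --- that is, that the rate $b = cA^2 - \lambda_m$ in the modified Merle--Zaag input remains large for every $m$ encountered. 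This is exactly what the hypothesis $A \gg \Lambda$ buys, since $m \leq 2\Lambda + 1$ throughout the iteration.
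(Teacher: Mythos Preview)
Your proof is correct and uses the same core tool --- the rescaled projection triple $(P,Q,S)$ fed into Lemma \ref{myMZ} --- but organizes the argument differently from the paper. The paper makes a single application: it first locates $m$ with $\lambda_m < \lambda \leq \lambda_{m+1}$ (using $\lambda < \infty$ from the Decay Condition), rescales by $e^{\lambda_m\tau}$, and then directly excludes \emph{both} cases 1 and 2 of Lemma \ref{myMZ}. Case 2 is ruled out there because its lower bound $Q \geq c_\delta e^{-\delta\tau}$ would force $q \geq c e^{-(\lambda_m+\delta)\tau}$, contradicting $q \leq \|f\eta\| \leq C e^{-(\lambda-\delta)\tau}$ with $\lambda > \lambda_m$. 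Thus case 3 must hold, which immediately pins $\lambda = \lambda_{m+1}$ and yields the projection estimate $\|\pi_{\leq m}(f\eta)\| \leq C(\|\pi_{>m}(f\eta)\| + I + e^{-cA^2\tau})$; a relabeling $m \mapsto m+1$ gives the statement.

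Your iterative scheme instead climbs from $m=2$, passing to $m+1$ whenever case 3 occurs and stopping at the first $m^*$ where case 2 holds. This is equally valid and in fact produces a slightly sharper projection bound at termination (the $o(Q)$ from case 2 rather than the $O(S+e^{-b\tau})$ from case 3). The trade-off is that you must track uniformity of the ODE-system constants (in particular the choices of $a$ and $b = cA^2 - \lambda_m$) across all levels of the iteration, which you handle correctly by fixing $A \gg \Lambda$ and $a \geq \Lambda + \tfrac{1}{2}$ at the outset. One minor slip: case 2 of Lemma \ref{myMZ} carries no exponential tail term, so your parenthetical ``with the exponential tail term inherited'' is unnecessary --- the bound you obtain is actually stronger than the one asserted.
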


\begin{proof}
The proof is similar to the previous proposition. First, we have ${\lambda<C\Lambda < \infty}$ because of the Decay Condition. Choose $m$ such that
$$\frac{m-1}{2}< \lambda \leq \frac{m}{2}$$
and define $P, Q, R, S=R+e^{\frac{m-1}{2}\tau}I$ as before. We will show $\lambda = \lambda_{m+1}=\frac{m}{2}$ which gives the assertion with a different choice of $m$.

Note that by our choice of $m$, $A$ being large compared to $\Lambda$ (and hence $\lambda$) and the definition of $\lambda$, for some $\frac{m-1}{2}< \alpha \leq \lambda$ we have
\begin{gather*}
    P+Q+S \leq  C e^{\frac{m-1}{2}\tau} (\| f\eta \|+I) \leq C e^{(\frac{m-1}{2}-\alpha)\tau} \to 0
\end{gather*}
Thus the first case in the ODE lemma is not possible, and we have either $P+S=o(Q)$ or $P+Q\leq C (S+\exp(-b\tau))$. The first case cannot happen, since it would imply that for some $\delta>0$ we have
\begin{gather*}
    C e^{-\delta \tau} \leq e^{\frac{m-1}{2}\tau}q= Q \Rightarrow C\exp(-(\frac{m-1}{2} + \delta)\tau) \leq q \leq C \exp( -(\lambda-\delta) \tau )
\end{gather*}
which is not possible. We conclude $P+Q \leq C(S+\exp(-b\tau))$ and
\begin{gather*}
    \| f\eta \|+ I \leq C \exp(-\frac{m-1}{2}\tau) (S+\exp(-b\tau) ) \leq C \exp(-(\frac{m}{2}-\delta)\tau)
\end{gather*}
for any $\delta$. This also shows $\lambda= \lambda_{m+1} = \frac{m}{2}$.

\end{proof}

\subsection{Controlling $u-1$}

We now deal with controlling $u-1$. When we differentiate $U$, its projection on the subspace spanned by $h_0$ (corresponding to eigenvalue $1$ for $\mathcal{L}$) is discarded. So we need to control this mode separately. We fix $A$ large enough so that the results of previous sections are applicable.

The equation for $$U =\log u$$ was computed in Lemma \ref{ufeq}. Localizing this equation gives
\begin{gather}
    (U\eta)_\tau = \mathcal{L}(U\eta) + \mathcal{N}(U)\eta - \mathcal{G}\eta + \mathcal{E}
\end{gather}
where
\begin{gather}
    \mathcal{N}(U) = \frac{1}{2}\left( 1-e^{-2U} -2U\right)\\
    \mathcal{G}= n\left( \int_0^\sigma f^2 \right) f \nonumber
\end{gather}
and the cut-off error term $\mathcal{E}$ is defined similar to before. Note that $u(\tau, 0) \to 1$ and $|f\theta| \leq \frac{C_A}{\sqrt{\tau}}$ for large $\tau$ implies the upper bounds 
\begin{gather*}
    |U\eta| \leq C_A\\
    \mathcal{N}(U) \leq C_A |U|^2
\end{gather*}

Let
\begin{gather}
    b(\tau) = \langle U\eta \, , \, h_0 \rangle    
\end{gather}
We can control the ODE for $b$ using $\| f\eta \|$. Note that Proposition \ref{MZ2} only provides control with $\| \pi_{\geq m} (f\eta)\| +I$, and not $\| \pi_{\geq m} (f\eta)\|$. Hence the first assertion of the following lemma is actually needed. 

\begin{lem}\label{bode}
For any $\varepsilon$ there exists large $\tau_0$ such that for $\tau \geq \tau_0$ we have
\begin{gather*}
    \frac{d}{d\tau}b^2 \geq (1-\varepsilon^2)b^2 - C_A \|f \eta\|^2 - C\exp(-cA^2 \tau)
\end{gather*}
where $C_A$ is a constant depending on $A$. Furthermore, consider the eigenfunction expansion $f\eta(\tau, \sigma)= \sum_{k=0}^\infty a_k(\tau) h_k(\sigma)$ and assume that for some $m\geq 1$, we have the following Domination Condition
\begin{gather}\label{onemodedoms}
    \lim_{\tau \to \infty} \frac{\sum_{k\neq m} a_k^2}{a_m^2} = 0
\end{gather}
Then we have the better estimate
\begin{gather*}
    \frac{d}{d\tau}b^2 \geq (1-\varepsilon^2)b^2 - \varepsilon^2 \|f \eta\|^2 - C\exp(-cA^2 \tau)
\end{gather*}
for large $\tau$.
\end{lem}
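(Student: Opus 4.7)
Differentiating $b^2 = \langle U\eta,h_0\rangle^2$ and substituting the localized equation for $U\eta$, together with the self-adjointness of $\mathcal{L}$ and the relation $\mathcal{L}h_0=h_0$, yields
\[
\frac{d}{d\tau}b^2 \;=\; 2b^2 + 2b\langle \mathcal{N}(U)\eta,h_0\rangle - 2b\langle \mathcal{G}\eta,h_0\rangle + 2b\langle \mathcal{E},h_0\rangle.
\]
The principal term $2b^2$ already produces the factor $(1-\varepsilon^2)$ needed on the right, so the task reduces to absorbing the three inner products into $\varepsilon^2 b^2$, $C_A\|f\eta\|^2$, and an exponentially small residual. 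The cut-off error $\mathcal{E}$ is supported in the annulus $A\sqrt{\tau}\leq|\sigma|\leq 2A\sqrt\tau$ where $\rho$ decays like $e^{-A^2\tau/4}$, so combining with the a priori bounds on $U$ and $f$ gives $\|\mathcal{E}\|\leq C\exp(-cA^2\tau)$ directly. The nonlocal contribution is controlled by the pointwise estimate $|\int_0^\sigma f^2|\leq 2A\sqrt\tau\cdot\|f\|_\infty^2\leq C_A/\sqrt\tau$ on the support of $\eta$ (using $|f|\leq C_A/\sqrt\tau$ from Proposition \ref{basicderestim}), which yields $\|\mathcal{G}\eta\|\leq (C_A/\sqrt\tau)\,\|f\eta\|$; Young's inequality then absorbs both contributions within the required form.

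The delicate piece is $\langle\mathcal{N}(U)\eta,h_0\rangle$. Since $\mathcal{N}(U) = -U^2 + O(U^3)$ near $U=0$ but only the crude bound $|U|\leq C_A$ holds on the full support of $\eta$, a naive estimate produces an unwanted factor $C_A$ in front of $b^2$. To remove it, I would split the integral into an inner piece $|\sigma|\leq R$ and the outer annulus with $R=\alpha(\varepsilon,A)\sqrt\tau$, choosing $\alpha$ small. On the inner piece, the bounds $u(\tau,0)\to 1$ and $|u_\sigma|\leq C_A/\sqrt\tau$ force $|U|\leq \delta(\varepsilon,A)$ to be arbitrarily small, so the quadratic estimate $|\mathcal{N}(U)|\leq C\delta|U|$ gives $\int_{|\sigma|\leq R}\mathcal{N}(U)^2\eta^2\rho\leq C\delta^2\|U\eta\|^2$. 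On the outer piece the Gaussian tail $\int_{|\sigma|>R}\rho\leq Ce^{-\alpha^2\tau/8}$ absorbs the uniform bound $|\mathcal{N}(U)|\leq C_A$. To convert $\|U\eta\|$ into $b$ and $\|f\eta\|$, I invoke a spectral Poincar\'e-type identity: writing $U\eta=\sum b_k h_k$, integration by parts against $\rho$ yields $\|(U\eta)_\sigma\|^2=\sum_{k\geq 1}(k/2)b_k^2\geq \tfrac12(\|U\eta\|^2-b^2)$. Together with $(U\eta)_\sigma=f\eta+U\eta_\sigma$ and the routine cut-off-annulus estimate on $U\eta_\sigma$, this gives
\[
\|U\eta\|^2 \;\leq\; b^2 + 4\|f\eta\|^2 + C\exp(-cA^2\tau),
\]
and a final Young's inequality produces the first assertion.

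For the refined inequality under the Domination Condition \eqref{onemodedoms}, the improvement from $C_A$ to $\varepsilon^2$ in the coefficient of $\|f\eta\|^2$ comes from exploiting the rigid single-mode structure of $f\approx a_m h_m$. In that regime, $\int_0^\sigma f^2$ has the explicit profile $a_m^2\int_0^\sigma h_m^2$, and the $h_0$-projection of $\mathcal{G}\eta$ either vanishes by parity or is $o(\|f\eta\|^2)$ through the explicit Hermite normalizations; similarly, the quadratic integral $\langle U^2\eta,h_0\rangle$ inherits the concentration in a single (anti-derivative) Hermite mode, making it $o(1)\cdot(b^2+\|f\eta\|^2)$ directly, without the loss of $C_A$ suffered in the general case. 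The main obstacle throughout is the tension inherent in the choice of $R=\alpha\sqrt\tau$: $\alpha$ must be small so that $U$ is uniformly small on $|\sigma|\leq R$ (enough to beat the quadratic nonlinearity in $\mathcal{N}(U)$), yet the resulting Gaussian tail $e^{-\alpha^2\tau/8}$ then depends on $\varepsilon$ through $\alpha=\alpha(\varepsilon,A)$, which is precisely how the constant $c$ in the final exponential residual picks up its $\varepsilon$-dependence.
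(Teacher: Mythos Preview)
Your setup—the ODE for $b^2$, the treatment of $\mathcal{E}$ and $\mathcal{G}$, and the spectral Poincar\'e bound $\|U\eta\|^2\leq b^2+4\|f\eta\|^2+C\exp(-cA^2\tau)$—all match the paper. The gap is in your handling of $\mathcal{N}(U)$.

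You split at $R=\alpha(\varepsilon,A)\sqrt\tau$ and control the outer piece by the Gaussian tail $\int_{|\sigma|>R}\rho\leq Ce^{-\alpha^2\tau/8}$. But in order that $|U|$ be small on the inner piece you need $\alpha C_A\lesssim\varepsilon$, so $\alpha\to 0$ as $\varepsilon\to 0$, and the exponential residual you produce has rate $\alpha^2/8$ rather than a fixed multiple of $A^2$. You note this yourself, but it is not harmless: the lemma is applied in Proposition~\ref{nocstforu} with $\varepsilon$ taken arbitrarily small \emph{after} $A$ has been fixed large compared to $\Lambda$, and the exponential term is discarded there precisely because $cA^2>2\Lambda$. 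If $c=c(\varepsilon)\to 0$, that step collapses. So the statement as written, with $c$ independent of $\varepsilon$, is not reached by your argument.

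The paper avoids this by taking $R$ to be a \emph{fixed} compact number (large enough that $\|h_0(1-\phi)\|\leq\varepsilon$), and handling the outer piece not via the Gaussian tail of $\rho$ but via the spectral decomposition $U\eta=b\,h_0+V$: on $\{|\sigma|>R\}$ one has $\|b\,h_0(1-\phi)\|\leq\varepsilon|b|$ by the choice of $R$, and $\|V(1-\phi)\|\leq\|V\|\leq C\|f\eta\|+C\exp(-cA^2\tau)$ by the Poincar\'e bound you already derived. Only the cut-off error $\mathcal{E}$ and the term $\|U\eta_\sigma\|$ generate exponentials, and those live at scale $A\sqrt\tau$, giving the uniform rate $cA^2$. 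This is also what drives the refined statement: under the Domination Condition one writes $U\eta=b\,h_0+b_{m+1}h_{m+1}+W$ with $\|W\|\leq\varepsilon\|f\eta\|$, chooses $R$ so that both $h_0$ and $h_{m+1}$ have small tails, and the $C_A\|f\eta\|$ term is replaced by $\varepsilon\|f\eta\|$. Your sketch of the refined case—parity arguments for $\mathcal{G}$ and concentration of $U^2$—misidentifies where the $C_A$ actually enters (it is the outer-region contribution of $V$ to $\|\mathcal{N}(U)\eta\|$, not $\mathcal{G}$).
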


\begin{proof}
We need the following simple property of (modified) Hermite polynomials (see Lemma \ref{recurs2} below): For any function $F$ with appropriate decay and any $m\geq 1$ we have
\begin{gather}\label{recurs}
    \langle F \, , \, h_m \rangle = \sqrt{\frac{2}{m}} \langle F_\sigma \, , \, h_{m-1} \rangle
\end{gather}

The function $b^2(\tau)$ satisfies the equation
\begin{gather*}
    \frac{d}{d\tau}b^2 = 2bb_\tau = 2b^2 + 2 b \big(\langle \mathcal{N}(U)\eta, h_0 \rangle - \langle \mathcal{G}\eta, h_0 \rangle + \langle \mathcal{E} , h_0 \rangle \big)
\end{gather*}
Recall the bound $|f|=|\frac{u_\sigma}{u}|=|U_\sigma| \leq \frac{C_A}{\sqrt{\tau}}$ for $|\sigma| \leq 2A\sqrt{\tau}$ and large $\tau$. This gives 
\begin{gather*}
    \int_0^\sigma f^2 \leq 2A\sqrt{\tau} \frac{C_A}{\tau} \leq \varepsilon \Rightarrow \| \mathcal{G}\eta \| \leq \varepsilon \|f\eta\|
\end{gather*}
We also have $\| \mathcal{E} \| \leq C \exp(-cA^2 \tau)$ as always.

To control $\mathcal{N}(U)$ we proceed as follows. Write
$$U\eta = b(\tau)h_0 + V$$
We can use equality \ref{recurs} and the fact that $U$ is bounded on the support of $\eta$ to write
\begin{align*}
    & \|V\|^2 = \|U\eta\|^2 - b(\tau)^2 = \sum_{m\geq 1} |\langle U \eta, h_m \rangle|^2 \leq C (\| f\eta\|^2+ \|U\eta_\sigma \|^2)\\
    & \leq C \| f\eta\|^2+ C\exp(-cA^2 \tau)
\end{align*}
Now pick $R$ large enough so that if $0\leq \phi \leq 1$ is compactly supported with $\phi=1$ on $[-R,R]$, we get $\| h_0 (1-\phi) \| \leq \varepsilon$. On the compact support of such $\phi$ we can assume $|U|\leq \varepsilon$ since $u\to1$, which implies $\mathcal{N}(U) \leq C_A|U|^2 \leq \varepsilon|U|$. We therefore have
\begin{align*}
    & \|\mathcal{N}(U)\eta\| \leq \|\mathcal{N}(U)\eta \phi\| + \|\mathcal{N}(U)\eta(1-\phi)\| \leq \varepsilon \|U\eta \phi\| + C_A \|U\eta(1-\phi)\| \leq \\
    & \varepsilon\| U\eta \| + C_A \| b(\tau) h_0 (1-\phi)\| + C_A \| V(1-\phi) \| \leq \\
    & 2\varepsilon |b(\tau)| + C_A\| f\eta \| + C\exp(-cA^2 \tau)
\end{align*}

Putting these together, the ODE can be estimated as
\begin{align*}
    (b^2)_\tau &\geq 2b^2 -b^2 -\big(\|\mathcal{N}(U)\eta\|^2 + \| \mathcal{G}\eta \|^2 +\| \mathcal{E}\|^2 \big) \\
    & \geq b^2 - 4 \varepsilon^2 b^2 - C_A \| f\eta \|^2- \varepsilon^2 \|f\eta\|^2 - C\exp(-cA^2 \tau) \\
    & \geq (1-4\varepsilon^2)b^2 - C_A \|f \eta\|^2 -C\exp(-cA^2 \tau)
\end{align*}

If we have the Domination Condition \ref{onemodedoms}, the previous argument still works with the following slight modifications. This time we write $U\eta$ as
$$U\eta = b(\tau) h_0 + b_{m+1}(\tau) h_{m+1}+ W$$
and when we pick $R$, we pick it large enough so that $\phi$ additionally satisfies ${\|h_{m+1} (1-\phi) \| \leq \varepsilon}$. Note that we get the estimates $|b_{m+1}|\leq C \| f\eta \|$ and $$\| W \|^2 \leq C(\| f\eta \|^2 - a_m^2(\tau)) +C\exp(-cA^2 \tau)\leq \varepsilon \| f\eta \|^2 + C\exp(-cA^2 \tau)$$ 
\end{proof}

The previous Lemma allows us to bound $b$ by $\|f\eta\|$.

\begin{prop}\label{nocstforu}
Assume the Decay Condition \ref{atmostexpdecay} holds and $A$ is chosen large compared to $\Lambda$. For some constant $C'_A$, we eventually have $$b^2(\tau) \leq C'_A \|f\eta(\tau)\|^2$$
If the Domination Condition \ref{onemodedoms} holds, this can be improved to $b^2(\tau) \leq \varepsilon \|f\eta(\tau)\|^2$ for any $\varepsilon$ and large enough $\tau$.
\end{prop}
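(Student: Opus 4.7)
The plan is a Duhamel--Gronwall argument applied to the inequality from Lemma~\ref{bode}, combined with the dynamical trichotomy established in Propositions~\ref{MZ}--\ref{MZ2}. The key input is the pointwise bound $|b(\tau)| \leq \|U\eta\| \leq C_A$ noted before the statement. Rewriting Lemma~\ref{bode} as $(b^2)_\tau - (1-\varepsilon^2)b^2 \geq -g(\tau)$ with $g(\tau) := C_A\|f\eta(\tau)\|^2 + C\exp(-cA^2\tau)$, multiplying by $e^{-(1-\varepsilon^2)\tau}$, integrating from $\tau_0$ to $T$, and sending $T\to\infty$ (the boundary term vanishes thanks to boundedness of $b^2$) yields the backward estimate
\begin{gather*}
b^2(\tau_0) \;\leq\; \int_{\tau_0}^\infty g(s)\, e^{-(1-\varepsilon^2)(s-\tau_0)}\, ds.
\end{gather*}
The exponential piece contributes $O(e^{-cA^2\tau_0})$, which by the Decay Condition~\ref{atmostexpdecay} and the choice $A \gg \Lambda$ (so that $cA^2 > 2\Lambda$) is absorbed into $\|f\eta(\tau_0)\|^2 \geq C_0^2 e^{-2\Lambda\tau_0}$.

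The main term $C_A\int_{\tau_0}^\infty \|f\eta(s)\|^2 e^{-(1-\varepsilon^2)(s-\tau_0)}\, ds$ is then handled separately in the two alternatives of Proposition~\ref{MZ}. In the neutral case ($x+\zeta = o(y)$), one has $\|f\eta\|^2 = y^2(1+o(1))$ for large $\tau$, and the estimate $|\dot y| \leq 2\varepsilon y + \exp$ (which holds once $x+\zeta \leq \varepsilon y$) gives $\|f\eta(s)\|^2 \leq 4\|f\eta(\tau_0)\|^2 e^{4\varepsilon(s-\tau_0)} + \exp\text{ small}$; choosing $\varepsilon$ so small that $4\varepsilon + \varepsilon^2 < 1$ makes the integral converge and bounds it by a constant multiple of $\|f\eta(\tau_0)\|^2$. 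In the exponential case ($x+y \lesssim \zeta + \exp$), Proposition~\ref{MZ} yields $\|f\eta\|^2 \leq C\zeta^2 + \exp$ with $\zeta$ decaying at rate at least $1/2-\varepsilon$; combining this with the identification $\lambda_m \approx \Lambda$ forced by Proposition~\ref{MZ2} together with the Decay Condition (so that $I(\tau_0) \lesssim \|f\eta(\tau_0)\|$ and hence $\zeta(\tau_0) \leq C\|f\eta(\tau_0)\|$), one obtains $\|f\eta(s)\|^2 \leq C\|f\eta(\tau_0)\|^2 e^{-(1-2\varepsilon)(s-\tau_0)} + \exp\text{ small}$, so the integral is again bounded by $C_A'\|f\eta(\tau_0)\|^2$.

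For the improvement under the Domination Condition~\ref{onemodedoms}, I would use the sharper form of Lemma~\ref{bode} in which $C_A\|f\eta\|^2$ is replaced by $\varepsilon^2\|f\eta\|^2$. Repeating the Duhamel argument and the above integral estimate gives $b^2(\tau_0) \leq C\varepsilon^2\|f\eta(\tau_0)\|^2 + \exp\text{ small}$, and since the parameter $\varepsilon$ in Lemma~\ref{bode} may be taken arbitrarily small, for any target $\varepsilon'$ I would pick $\varepsilon$ so small that $C\varepsilon^2 \leq \varepsilon'/2$ and absorb the remaining exponential via the Decay Condition. The main technical step that requires care is justifying $I(\tau_0) \lesssim \|f\eta(\tau_0)\|$ in the exponential case; this hinges on the observation that, when $A$ is chosen large compared to $\Lambda$, the index $m$ appearing in Proposition~\ref{MZ2} coincides (up to the arbitrarily small $\delta$) with the one implicitly fixed by the Decay Condition, so that the exponential decay of $I$ and the a priori lower bound on $\|f\eta\|$ occur at the same rate.
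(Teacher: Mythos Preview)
Your backward Duhamel integration is a clean alternative to the paper's forward contradiction argument, and in the neutral case it works: the bound $|\dot y|\leq 2\varepsilon y + \exp$ legitimately gives $\|f\eta(s)\|^2 \leq C\|f\eta(\tau_0)\|^2 e^{4\varepsilon(s-\tau_0)}$, and the kernel $e^{-(1-\varepsilon^2)(s-\tau_0)}$ wins. The paper instead assumes $b^2(\tau_i)>8C_A\|f\eta(\tau_i)\|^2$ along a sequence, picks one large $\tau_j$, and runs a continuation argument showing that $b^2\geq 4C_A\|f\eta\|^2$ persists, hence $(b^2)'\geq\tfrac12 b^2$ and $\|U\eta\|$ blows up. Both routes ultimately rest on the same forward comparison $\|f\eta(s)\|\leq Ce^{\delta(s-\tau_0)}\|f\eta(\tau_0)\|$.

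Your exponential case, however, has a genuine gap. The ``identification $\lambda_m\approx\Lambda$'' you invoke is not forced by anything: the Decay Condition $\|f\eta\|\geq C_0e^{-\Lambda\tau}$ only says the optimal rate satisfies $\lambda_m\leq\Lambda$; equality is not implied, and $\Lambda$ may well be much larger than $\lambda_m$. Consequently your deduction $I(\tau_0)\lesssim\|f\eta(\tau_0)\|$ fails---from Proposition~\ref{MZ2} you get $I(\tau_0)\leq Ce^{-(\lambda_m-\delta)\tau_0}$ while the Decay Condition gives $\|f\eta(\tau_0)\|\geq C_0e^{-\Lambda\tau_0}$, so the ratio is bounded only by $Ce^{(\Lambda-\lambda_m+\delta)\tau_0}$, which need not stay bounded. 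Without $\zeta(\tau_0)\leq C\|f\eta(\tau_0)\|$ your forward estimate $\|f\eta(s)\|^2\leq C\|f\eta(\tau_0)\|^2 e^{-(1-2\varepsilon)(s-\tau_0)}$ is not established, and the Duhamel integral is not shown to be $\lesssim\|f\eta(\tau_0)\|^2$. The paper's contradiction structure is somewhat more forgiving here because it only needs $\|f\eta(\tau)\|$ not to outpace the exponential growth $e^{(\tau-\tau_j)/4}$ of $b$, rather than an explicit bound on $I(\tau_0)/\|f\eta(\tau_0)\|$.
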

\begin{proof}
Since $A$ is large compared to $\Lambda$, we may drop the exponential terms and replace them with $\varepsilon \| f\eta\|$. Both statements are proven similarly. Let $C_A$ be as in Lemma \ref{bode}. Assume that for some sequence $\tau_i \to \infty$ we have $b^2(\tau_i) > 8C_A \|f\eta(\tau_i)\|^2$ when proving the first statement, or $b^2(\tau_i) > \varepsilon_0 \|f\eta(\tau_i)\|^2$ for some $\varepsilon_0$ for the second statement. Additionally, for proving the second statement, pick $\varepsilon\leq \frac{1}{2}$ in Lemma \ref{bode} small enough such that $\varepsilon^2 \frac{2}{\varepsilon_0} \leq \frac{1}{4}$, and choose $j$ large enough so that the conclusion of Lemma \ref{bode} holds for our choice of $\varepsilon$ and $\tau\geq \tau_j$. Now let $\tau_*$ be the supremum of all $\tau$'s such that on $[\tau_j, \tau]$ we have $b^2(\tau) \geq 4C_A \|f\eta(\tau)\|^2$ or $b^2(\tau) \geq \frac{\varepsilon_0}{2} \|f\eta(\tau)\|^2$. By Lemma \ref{bode} and our choice of $\tau_j$ and $\varepsilon$, on $[\tau_j, \tau_*]$ we have \begin{gather*}
    \frac{d}{d\tau}b^2 \geq (1-\varepsilon^2)b^2 - \frac{1}{4}b^2 \geq \frac{1}{2}b^2 \Rightarrow \\
    b^2(\tau) \geq e^{\frac{1}{2}(\tau-\tau_j)}b^2(\tau_j) > \gamma e^{\frac{1}{2}(\tau-\tau_j)} \| f\eta\|^2(\tau_j)
\end{gather*}
where $\gamma$ is $8C_A$ or $\varepsilon_0$. Whether the neutral mode of $f\eta$ dominates other modes or we have exponential decay, for every $\delta>0$ we have $\| f\eta\|(\tau) \leq e^{\delta(\tau-\tau_j)} \| f\eta\|(\tau_j)$. So $b^2(\tau) > \gamma \| f\eta\|^2(\tau)$. This shows that in fact we have $\tau_* = \infty$, which implies $\|U\eta\|$ is blowing up exponentially. This contradicts the boundedness of $U\eta$.
\end{proof}

Controlling the constant mode of $U\eta$ allows us to get pointwise control of $u-1$ near the origin, which will be needed later.
\begin{lem} \label{betterbaseptest}
Under the Decay Condition \ref{atmostexpdecay}, on any compact set $|\sigma| \leq R$ we have
$$|u-1| \leq C \| f\eta \|$$
\end{lem}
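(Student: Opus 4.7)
The plan is to first pass from $u-1$ to $U = \log u$, then split $U(\tau,\sigma) = U(\tau,0) + \int_0^\sigma f\,d\xi$ and estimate each piece by $\|f\eta\|$. On any compact set $|\sigma|\leq R$ we have $u\to 1$, so for large $\tau$ we may use $|u-1|\leq 2|U|$. The integral term is easy: by Cauchy--Schwarz $|\int_0^\sigma f\,d\xi|\leq \sqrt{R}\,\|f\|_{L^2([-R,R])}$, and since $\rho\geq e^{-R^2/4}$ and $\eta\equiv 1$ on $[-R,R]$ for large $\tau$, this is bounded by $C_R\|f\eta\|_{\mathfrak{H}}$. All the work is therefore in controlling the pointwise value $U(\tau,0)$.

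For $U(\tau,0)$, I would exploit that the constant mode $b(\tau)=\langle U\eta,h_0\rangle$ is already controlled by $\|f\eta\|$ thanks to Proposition \ref{nocstforu}. Writing $h_0=c_0$ (a constant), we have
\begin{equation*}
b = c_0\,U(\tau,0)\!\int \eta\rho\,d\sigma + c_0\!\int\!\Bigl(\int_0^\sigma f(\xi)\,d\xi\Bigr)\eta(\sigma)\rho(\sigma)\,d\sigma,
\end{equation*}
so inverting the first factor (noting $\int\eta\rho = \int\rho + O(e^{-cA^2\tau})$) expresses $U(\tau,0)$ in terms of $b$ and the second integral. I would then apply Fubini to rewrite the second integral as $\int f(\xi)K(\xi)\,d\xi$, where $K(\xi)=\pm\int_{\xi}^{\pm\infty}\eta\rho\,ds$ enjoys the Gaussian bound $|K(\xi)|\lesssim \langle\xi\rangle^{-1}\rho(\xi)$ for large $|\xi|$. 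Cauchy--Schwarz in the form
\begin{equation*}
\Bigl|\int fK\,d\xi\Bigr|^2 \leq \Bigl(\int f^2\rho\,d\xi\Bigr)\Bigl(\int K^2/\rho\,d\xi\Bigr)
\end{equation*}
together with $\int K^2/\rho\,d\xi \leq C$ and a standard splitting of $\int f^2\rho$ into the region where $\eta=1$ plus an exponentially small Gaussian tail (using the apriori bound $|f|\leq M$ from Proposition \ref{basicderestim}) yields $|\int fK\,d\xi|\leq C\|f\eta\|+Ce^{-cA^2\tau}$.

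Combining these steps gives $|U(\tau,0)|\leq C|b| + C\|f\eta\| + Ce^{-cA^2\tau}$. Since the Decay Condition \ref{atmostexpdecay} says $\|f\eta\|\geq C_0e^{-\Lambda\tau}$ and $A$ has been chosen large compared to $\Lambda$, the exponential error is dominated by $\|f\eta\|$; and Proposition \ref{nocstforu} supplies $|b|\leq C'_A\|f\eta\|$. Therefore $|U(\tau,0)|\leq C\|f\eta\|$, which combined with the compact-set estimate of the integral term gives $|U|\leq C\|f\eta\|$ on $|\sigma|\leq R$, and finally $|u-1|\leq C\|f\eta\|$.

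The main delicate point is the Gaussian-weighted estimate for the correction $\int(\int_0^\sigma f\,d\xi)\eta\rho\,d\sigma$, which is really a weighted Hardy-type inequality specialised to this geometry; once one identifies the right kernel $K$ and its Gaussian decay, everything collapses to Cauchy--Schwarz. The rest is bookkeeping: tracking that all the $e^{-cA^2\tau}$ errors introduced by the cut-off $\eta$ can be absorbed using the a priori lower bound on $\|f\eta\|$ coming from the Decay Condition.
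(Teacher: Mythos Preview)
Your proof is correct and takes a genuinely different route from the paper's. The paper argues spectrally: using the Hermite recursion $\langle U\eta,h_m\rangle=\sqrt{2/m}\,\langle (U\eta)_\sigma,h_{m-1}\rangle$ (up to cut-off errors) it controls all nonconstant modes of $U\eta$ by $\|f\eta\|$, combines this with Proposition~\ref{nocstforu} for the constant mode to obtain $\|U\eta\|\leq C\|f\eta\|$, and then invokes the weighted Sobolev inequality to pass to a pointwise bound on compact sets. You instead work directly with the fundamental-theorem-of-calculus decomposition $U(\tau,\sigma)=U(\tau,0)+\int_0^\sigma f$, control the integral piece on $|\sigma|\leq R$ by Cauchy--Schwarz, and recover $U(\tau,0)$ from $b$ via an explicit Fubini/kernel computation. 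Your route is more elementary in that it avoids the Sobolev machinery and makes the dependence on $f$ completely explicit; the paper's route is cleaner in that the single bound $\|U\eta\|\leq C\|f\eta\|$ drops out immediately from the mode relations already established in Lemma~\ref{bode}, without any additional kernel estimate. Both arguments use Proposition~\ref{nocstforu} and the Decay Condition in exactly the same way to absorb the $e^{-cA^2\tau}$ errors.
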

\begin{proof}
As is shown later in the proof of Lemma \ref{bode}, Proposition \ref{nocstforu} implies the bounds $\| U\eta \| \leq C \| f\eta \|$, which gives $|U| \leq C \| f\eta \|$ on any compact set by the Sobolev inequality. Since $u \to 1$ on any compact set and $\log u \approx u-1$, we have the same bound for $|u-1|$.
\end{proof}

Similar to the previous two Lemmas, we can show the following in case our Decay Condition fails. This establishes part of our main result, Theorem \ref{main}.
\begin{lem}
    Assume for every $A$, we eventually have
    $$\| f(\tau, \sigma) \chi(\frac{\sigma}{A\sqrt{\tau}}) \| \leq C_A \exp(-cA^2 \tau)$$
    Then on any set $\sigma \leq A \sqrt{\tau}$, we have $|u-1| \leq C \exp(-A\tau)$ for any $A$ and large enough $\tau$.
\end{lem}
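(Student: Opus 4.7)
Fix a target $A>0$; the goal is to produce constants $C_A,\tau_A$ such that $|u(\tau,\sigma)-1|\leq C_A e^{-A\tau}$ whenever $\tau\geq\tau_A$ and $|\sigma|\leq A\sqrt{\tau}$. The plan is to introduce an auxiliary, much larger cutoff scale $A'=A'(A)$ and to work throughout with $\eta(\tau,\sigma)=\chi(\sigma/(A'\sqrt{\tau}))$.

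First I transfer the rapid decay of $\|f\eta\|$ to the constant mode $b(\tau)=\langle U\eta,h_0\rangle$. By hypothesis $\|f\eta\|^2\leq C_{A'}\exp(-2cA'^2\tau)$, so Lemma \ref{bode} with cutoff scale $A'$ gives
\begin{equation*}
\tfrac{d}{d\tau}b^2 \geq (1-\varepsilon^2)b^2 - C_{A'}\exp(-cA'^2\tau).
\end{equation*}
Since $u\to 1$ on compact sets and $U\eta$ is bounded in the Gaussian weighted $L^2$, $b^2$ is uniformly bounded. A continuity/contradiction argument then applies: if $b^2(\tau_0)$ exceeds a suitable multiple of $C_{A'}\exp(-cA'^2\tau_0)$, then on $[\tau_0,\infty)$ one has $\tfrac{d}{d\tau}b^2\geq \tfrac{1}{3}b^2$ and the threshold keeps decaying, so $b^2\to\infty$, contradicting boundedness; hence $b^2(\tau)\leq C'_{A'}\exp(-cA'^2\tau)$ for all large $\tau$. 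This is the analogue of Proposition \ref{nocstforu} in the regime where the Decay Condition fails.

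Next I control the remaining modes of $U\eta$ via the Hermite recurrence \eqref{recurs}, which gives $|\langle U\eta,h_m\rangle|^2 = (2/m)|\langle (U\eta)_\sigma,h_{m-1}\rangle|^2$ for $m\geq 1$. Summing and using $(U\eta)_\sigma = f\eta+U\eta_\sigma$, together with the fact that $\eta_\sigma$ is supported in $|\sigma|\geq A'\sqrt{\tau}$ where the Gaussian weight is $O(\exp(-A'^2\tau/4))$ while $|U|\leq C_{A'}$, I obtain $\|U\eta-bh_0\|^2\leq C(\|f\eta\|^2+\exp(-cA'^2\tau))$. Combining with the bound on $b^2$,
\begin{equation*}
\|U\eta\|^2 \leq C_{A'}\exp(-cA'^2\tau).
\end{equation*}

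Finally I upgrade this weighted $L^2$ bound to a pointwise bound on $|\sigma|\leq A\sqrt{\tau}$. There $\rho\geq\exp(-A^2\tau/4)$, so $\int_{|\sigma|\leq A\sqrt{\tau}}U^2\,d\sigma\leq \exp(A^2\tau/4)\|U\eta\|^2\leq C_{A'}\exp((A^2/4-cA'^2)\tau)$. By Proposition \ref{basicderestim}, $|U_\sigma|=|f|\leq C/\sqrt{\tau}$ on the cutoff region, so if $M=\sup_{|\sigma|\leq A\sqrt{\tau}}|U|$ is attained at $\sigma_\ast$, then $|U|\geq M/2$ on an interval of length $\sim M\sqrt{\tau}$ around $\sigma_\ast$ (which stays inside the cutoff region because $M$ is small and $A'\gg A$), giving $\int U^2\,d\sigma \geq cM^3\sqrt{\tau}$. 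Hence $M^3\leq C_{A'}\exp((A^2/4-cA'^2)\tau)$, and choosing $A'$ large enough that $cA'^2/3-A^2/12\geq A$ produces $|U|\leq C e^{-A\tau}$; since $u\to 1$, $|u-1|\leq 2|U|\leq C e^{-A\tau}$. The main obstacle is the interplay between the Gaussian-weight loss $e^{A^2\tau/4}$ from enlarging the pointwise region and the fast $L^2$ decay $e^{-cA'^2\tau}$ from the hypothesis: because both scales enter quadratically in the exponent, choosing the auxiliary $A'\gg A$ is exactly what absorbs the loss. A secondary subtlety is running the continuity argument for $b^2$ directly in the absence of the Decay Condition that powered Proposition \ref{nocstforu}.
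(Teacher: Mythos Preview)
Your proof is correct and follows exactly the route the paper indicates (it only says ``Similar to the previous two Lemmas'' and gives no details): control $b(\tau)=\langle U\eta,h_0\rangle$ by the contradiction argument of Proposition~\ref{nocstforu} adapted to the case where the forcing is the exponential term itself, then bound the nonconstant modes of $U\eta$ by $\|f\eta\|$ via the recurrence as in the proof of Lemma~\ref{bode}, and finally pass from weighted $L^2$ to pointwise. One small imprecision: in Step~4 your interval of length $\sim M\sqrt{\tau}$ must lie inside the region over which you took the unweighted $L^2$ bound (namely $|\sigma|\leq A\sqrt{\tau}$), not merely inside the larger cutoff region; since a~priori you only know $|U|\leq C_A$ on the parabolic scale, take the $L^2$ bound over a slightly enlarged region $|\sigma|\leq \tilde A\sqrt{\tau}$ with $\tilde A=A+C_A/(2C)$, which only changes the exponent loss from $A^2/4$ to $\tilde A^2/4$ and is still absorbed by choosing $A'$ large.
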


\subsection{Proof of Theorem \ref{main}}
In this subsection we prove the main result, Theorem \ref{main}, assuming the contents of sections \ref{neutral} and \ref{exp}. The main theorem follows from the Proposition below by Sobolev embedding. Notice that the eigenvalues of $\mathcal{L}$ and $\mathcal{A}$ in the equations for $U$ and $f$ differ by $\frac{1}{2}$ and we are working with $\mathcal{A}$. Hence, unlike the usual convention, the neutral mode corresponds to $m=1$.
\begin{prop}
Consider the expansions for $U=\log u$ and $f=U_\sigma = \frac{u_\sigma}{u}$ in the orthonormal basis of eigenfunctions
\begin{gather*}
    U\eta = \sum_{k=0}^\infty b_k(\tau) h_k(\sigma),\quad
    f\eta = \sum_{k=0}^\infty a_k(\tau) h_k(\sigma).
\end{gather*}
There exists some $m \geq 1$ such that
\begin{gather*}
    \sum_{k\neq m}^\infty a_k^2(\tau) = o(a_m^2(\tau)),\quad
    \sum_{k\neq m+1}^\infty b_k^2(\tau) = o(b_{m+1}^2(\tau))
\end{gather*}
If $m=1$ then $b_2(\tau)= a_1(\tau) = \left(\frac{\sqrt[4]{\pi}}{2} + o(1) \right) \frac{1}{\tau}$.\newline
If $m \geq 2$, then ${b_{m+1}(\tau) = \sqrt{\frac{2}{m+1}} a_m(\tau) = C e^{-\frac{m-1}{2}\tau}}$.
\end{prop}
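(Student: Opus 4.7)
The first step is to apply Propositions \ref{MZ} and \ref{MZ2} to the system \eqref{odesyswexp}. In the neutral alternative $x+\zeta=o(y)$, the $h_1$--mode of $f\eta$ dominates immediately, so $a_0^2+\sum_{k\geq 2}a_k^2=o(a_1^2)$, and we set $m=1$. In the exponential alternative, provided the Decay Condition \eqref{atmostexpdecay} holds (otherwise we are in case~1 of Theorem \ref{main}, which this proposition does not address), Proposition \ref{MZ2} pins the sharp decay rate to $\lambda_m=(m-1)/2$ for some $m\geq 2$, and yields both $\|\pi_{<m}(f\eta)\|\leq C(\|\pi_{\geq m}(f\eta)\|+I+e^{-cA^2\tau})$ and $\|f\eta\|+I\leq C_\delta e^{-(\lambda_m-\delta)\tau}$ for every $\delta>0$.

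To sharpen the dominance in the exponential case and extract the leading coefficient, project the $f\eta$-equation onto $h_k$ to obtain $a_k'=-\lambda_k a_k+g_k$ with $|g_k|\leq C\|f\eta\|^2+O(e^{-cA^2\tau})$ by Proposition \ref{feqlocest}. For $k>m$, Duhamel's formula immediately gives $a_k=O(e^{-\min(\lambda_k,2\lambda_m)\tau})=o(a_m)$ since $\lambda_k>\lambda_m$. For $k<m$, the linear part is unstable/neutral, so one selects the bounded solution by integrating from $+\infty$, obtaining $a_k(\tau)=O(e^{-2\lambda_m\tau})=o(a_m)$ because $\lambda_k<\lambda_m<2\lambda_m$. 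The same projection onto $h_m$ yields a scalar ODE whose nonlinear part is $o(a_m)$, so $a_m(\tau)=(C+o(1))e^{-\lambda_m\tau}$ for some constant $C$ (this is the content of section \ref{exp}). In the neutral case $m=1$, section \ref{neutral} analogously derives $a_1'=-c_*\,a_1^2+o(a_1^2)$ from the quadratic pieces of $(u^{-2}-1)f$ and the integral nonlinearity, with $c_*$ an explicit Hermite--Gaussian moment; integration and matching to the normalization $h_2(\sigma)=(\sigma^2-2)/(4\pi^{1/4})$ gives $a_1(\tau)=(\sqrt[4]{\pi}/2+o(1))/\tau$.

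To transfer these asymptotics from $f\eta$ to $U\eta$, apply the Hermite recursion of Lemma \ref{recurs2} already used in the proof of Lemma \ref{bode}: since $(U\eta)_\sigma-f\eta=U\eta_\sigma$ is supported on $|\sigma|\gtrsim A\sqrt{\tau}$ where the Gaussian weight is exponentially small,
\begin{equation*}
    b_{k+1}=\sqrt{\tfrac{2}{k+1}}\,\langle (U\eta)_\sigma,h_k\rangle=\sqrt{\tfrac{2}{k+1}}\,a_k+O(e^{-cA^2\tau}).
\end{equation*}
Taking $k=m$ yields $b_{m+1}=\sqrt{2/(m+1)}\,a_m\,(1+o(1))$, so $b_{m+1}$ inherits the asymptotic of $a_m$; taking $k\geq 1$ with $k\neq m$ yields $b_{k+1}=o(b_{m+1})$. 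The constant mode $b_0$ is invisible to the recursion, but Proposition \ref{nocstforu} combined with the Domination Condition \eqref{onemodedoms} established in the previous paragraph gives $b_0^2\leq\varepsilon\|f\eta\|^2$ for every $\varepsilon>0$, so $b_0=o(b_{m+1})$ as well.

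The main obstacle is the refined domination for $k<m$ in the exponential case: Proposition \ref{MZ2} alone only yields $\|\pi_{<m}(f\eta)\|=O(a_m)$, which is merely comparable to (not $o$ of) the dominant mode. Upgrading this to $a_k=o(a_m)$ for each individual $k<m$ relies crucially on the fact that the forcing $g_k$ is \emph{quadratic} in $\|f\eta\|$ and not linear, so that integrating the unstable/neutral linear ODE from $+\infty$ produces decay at the strictly faster rate $e^{-2\lambda_m\tau}$. Pinning down the explicit constant $\sqrt[4]{\pi}/2$ in the neutral case likewise requires a precise Hermite moment computation rather than just abstract ODE analysis, which will be the bulk of section \ref{neutral}.
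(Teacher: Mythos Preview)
Your overall architecture matches the paper's: Proposition~\ref{MZ} and Proposition~\ref{MZ2} yield the dichotomy, Sections~\ref{neutral} and~\ref{exp} carry out the scalar ODE analysis for $a_m$, Lemma~\ref{recurs2} transfers the dominance to the $b_k$'s, and Proposition~\ref{nocstforu} handles $b_0$. The relation $b_{m+1}=\sqrt{2/(m+1)}\,a_m+O(e^{-cA^2\tau})$ and the treatment of the neutral case are exactly as in the paper.

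There is, however, a genuine gap in your exponential-case sketch. You claim that projecting \eqref{feqloc} onto $h_k$ yields $a_k'=-\lambda_k a_k+g_k$ with $|g_k|\leq C\|f\eta\|^2+O(e^{-cA^2\tau})$ ``by Proposition~\ref{feqlocest}''. This is not what Proposition~\ref{feqlocest} gives: the bound there is $\|F\|^2\leq C\varepsilon\|f\eta\|^2+C_\varepsilon\int f^4\theta^4\rho+\cdots$, and the quartic integral is \emph{not} controlled by $\|f\eta\|^4$ without further input (an $L^4$--$L^2$ estimate does not come for free). The paper closes this gap in Section~\ref{exp} by a bootstrap: first, parabolic regularity (Theorem~\ref{class}) converts $\|f\eta\|\leq Ce^{-\beta\tau}$ into the pointwise bound $|f|\leq Ce^{-\beta\tau}$ on compact sets; second, feeding this into the refined Corollaries~\ref{reffeqlocest}, \ref{refquadleqi}, \ref{refidifineq} yields $I\leq Ce^{-2\beta\tau}$ and $\|F+E\|\leq Ce^{-\frac{3}{2}\beta\tau}$. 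Note that the forcing bound obtained is $e^{-\frac{3}{2}\beta\tau}$, not $\|f\eta\|^2\sim e^{-2\beta\tau}$; since $\tfrac{3}{2}\beta>\lambda_m$ for $\beta$ close to $\lambda_m$ and $m\geq 2$, this weaker bound still suffices for your Duhamel/``integrate from $+\infty$'' argument. Without this bootstrap step, your claimed quadratic forcing bound is unjustified and the argument for $a_k=o(a_m)$ when $k\neq m$ does not go through.

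A minor inaccuracy: for $2\leq k<m$ the linear part is stable (with $0<\lambda_k<\lambda_m$), not ``unstable/neutral''. Your ``integrate from $+\infty$'' device still works there, but only because one knows a priori that $e^{\lambda_k\tau}a_k\to 0$ from $|a_k|\leq\|f\eta\|\leq Ce^{-\beta\tau}$ with $\beta>\lambda_k$; this should be stated explicitly.
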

\begin{proof}
The first assertion is proven jointly in Proposition \ref{MZ} and Proposition \ref{onemodedoms} in Section \ref{exp}. The coefficient $a_m$ is computed in Section \ref{neutral} in the $m=1$ case, and in Proposition \ref{onemodedoms} in the $m\geq 2$ case. The relation between $a_m$ and $b_{m+1}$ follows from Lemma \ref{recurs2} and the Remark following it.
\end{proof}

\subsection{Proof of Proposition \ref{feqlocest}}

We now deal with Proposition \ref{feqlocest}. We estimate the terms in $F$ one by one. We assume $\tau_0$ is large enough so that results of section \ref{apriori} hold for $\tau \geq \tau_0$.
\begin{lem}
For $\tau\geq \tau_0$, universal $C$ and $N=N(g_0)$ we have
$$
\int \left( \int_0^\sigma f^2(\xi)\, d\xi  \right)^2 f_\sigma^2 \eta^2 \rho \, d\sigma \leq C N^2 \int f^4 \theta^4 \rho \, d\sigma
$$
\end{lem}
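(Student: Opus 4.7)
The plan is to extract the pointwise bound $|f_\sigma|\le N$ on the support of $\eta$ (which is contained in the support of $\theta$ by our choice of cutoffs) and reduce matters to estimating the double integral
\[
\int \Bigl(\int_0^\sigma f^2(\xi)\,d\xi\Bigr)^{\!2}\eta^2(\sigma)\,\rho(\sigma)\,d\sigma
\]
by a constant multiple of $\int f^4\theta^4\rho\,d\xi$. Since $\eta(\tau,\sigma)=\chi(\sigma/(A\sqrt\tau))$ is supported in $|\sigma|\le 2A\sqrt\tau$, and on that set $\theta(\tau,\sigma)=\chi(\sigma/(2A\sqrt\tau))\equiv 1$, we may freely insert $\theta^2$ inside the inner integral at no cost whenever $\sigma$ lies in the support of $\eta$.

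The key analytic step is a one-variable Hardy/Cauchy--Schwarz computation against the Gaussian weight. For $\sigma\ge 0$, Cauchy--Schwarz gives
\[
\Bigl(\int_0^\sigma f^2\theta^2\,d\xi\Bigr)^{\!2}\le \sigma\int_0^\sigma f^4\theta^4\,d\xi,
\]
and symmetrically for $\sigma\le 0$ with $|\sigma|$ and $\int_\sigma^0$. Then Fubini together with the elementary Gaussian identity
\[
\int_\xi^\infty \sigma\,\rho(\sigma)\,d\sigma = 2\rho(\xi)\qquad(\xi\ge 0),
\]
and its mirror for $\xi\le 0$, yields
\[
\int|\sigma|\Bigl|\int_0^\sigma f^4\theta^4\,d\xi\Bigr|\eta^2(\sigma)\rho(\sigma)\,d\sigma
\;\le\;2\int f^4\theta^4\rho\,d\xi,
\]
after discarding the harmless factor $\eta^2\le 1$ coming from the outer integral. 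Combining the two steps produces the bound
\[
\int\Bigl(\int_0^\sigma f^2\,d\xi\Bigr)^{\!2}\eta^2\rho\,d\sigma
\;\le\;2\int f^4\theta^4\rho\,d\sigma,
\]
and multiplying through by $N^2$ (from the $f_\sigma^2\le N^2$ estimate, valid on $\mathrm{supp}\,\eta\subset\mathrm{supp}\,\theta$) gives the desired inequality with universal constant $C=2$.

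No step here is subtle; the only point to check with care is that the inner integration variable $\xi$ stays in $\mathrm{supp}\,\theta$ whenever the outer variable $\sigma$ lies in $\mathrm{supp}\,\eta$, which is what permits the insertion of $\theta^4$ and the use of the pointwise bound $|f_\sigma|\le N$. The Gaussian identity $\int_\xi^\infty\sigma\rho\,d\sigma=2\rho(\xi)$ is what makes the weighted Hardy-type estimate close, producing a universal constant independent of $A$ and $\tau$.
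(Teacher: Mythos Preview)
Your proof is correct and follows essentially the same approach as the paper's: both apply Cauchy--Schwarz to bound $\bigl(\int_0^\sigma f^2\bigr)^2 \le |\sigma|\,\bigl|\int_0^\sigma f^4\bigr|$, then use Fubini together with the Gaussian tail identity $\int_\xi^\infty |\sigma|\rho\,d\sigma = 2\rho(\xi)$ and the pointwise bound $|f_\sigma|\le N$ on $\mathrm{supp}\,\eta$. The only cosmetic difference is that you extract $f_\sigma^2\le N^2$ and insert $\theta$ into the inner integral at the outset (using $\mathrm{supp}\,\eta\subset\{\theta=1\}$), whereas the paper carries $f_\sigma^2\eta^2$ through the Fubini step, uses the monotonicity $\eta(\sigma)\le\eta(\xi)$ for $|\xi|\le|\sigma|$, and converts $\eta^2$ to $\theta^4$ only at the end; both routes yield the same bound.
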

\begin{proof}
Using Holder's inequality and then changing the order of integration we have
\begin{align*}
    & \int \left( \int_0^\sigma f^2(\xi)\, d\xi  \right)^2 f_\sigma^2 \eta^2 \rho \, d\sigma \leq \int_{-\infty}^\infty |\sigma| \left| \int_0^\sigma f^4(\xi)\, d\xi  \right| f_\sigma^2 \eta^2 \rho \, d\sigma\\
    & = \left( \int_{-\infty}^0 \int_{-\infty}^\xi + \int_0^\infty \int_\xi^\infty \right) |\sigma| f^4(\xi)  f_\sigma^2(\sigma) \eta^2(\sigma) \rho(\sigma) \, d\sigma \, d\xi\\
    & \leq N^2\left( \int_{-\infty}^0 \int_{-\infty}^\xi + \int_0^\infty \int_\xi^\infty \right) |\sigma| f^4(\xi)  \eta^2(\xi) \rho(\sigma) \, d\sigma \, d\xi \\
    & \leq C N^2\left( \int_{-\infty}^0 + \int_0^\infty \right) f^4(\xi) \eta^2(\xi) \rho(\xi) \, d\xi \leq C N^2 \int_{-\infty}^\infty f^4(\xi) \theta^4(\xi) \rho(\xi)\, d\xi
\end{align*}
In the above calculations, we have used the following points: $f_\sigma$ is bounded by some $N$ on the support of $\eta$; since $r\chi'(r) \leq 0$ we have $\eta(\sigma) \leq \eta(\xi)$ for $|\xi|\leq |\sigma|$; we have the estimate
$\int_\xi^\infty |\sigma| \exp(\frac{-\sigma^2}{4})\, d\sigma \leq C \exp(\frac{-\xi^2}{4})$ for $\xi>0$ and a similar estimate for the integral from $-\infty$ to $\xi<0$; the specific choice of $\chi$ implies $\theta=1$ on the support of $\eta$.
\end{proof}

\begin{lem}
Assuming $u \geq \frac{1}{2}$ and $|f|\leq M$ on the support of $\eta$, for any $\varepsilon>0$ there exist $\tau_\varepsilon=\tau_\varepsilon(\tau_0, \varepsilon)$ and constants $C$ and $C_\varepsilon=C_\varepsilon(g_0, \varepsilon)$ such that for $\tau\geq \tau_\varepsilon$ we have
\begin{gather*}
\int \left( \frac{1}{u^2}-1 \right)^2 f^2 \eta^2 \rho \, d\sigma \leq \\
C \, \varepsilon \int f^2 \eta^2 \rho \, d\sigma + C_\varepsilon \int f^4 \theta^4 \rho \, d\sigma+ C M^4 \exp(- \frac{A^2}{8} \tau)
\end{gather*}
\end{lem}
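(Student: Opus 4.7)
The plan is to reduce this to a Fubini-type argument parallel to the preceding lemma, with the role of the ``small'' factor now played by $(1/u^2-1)^2$ instead of $\bigl(\int_0^\sigma f^2\bigr)^2$. The key pointwise estimate is
$$(1/u^2-1)^2 \leq C(u-1)^2 \leq C U^2, \qquad U = \log u,$$
valid on the support of $\eta$ since Lemma \ref{ulowbound} and Proposition \ref{basicderestim} give $1/2 \leq u \leq C_A$ there. Writing $U(\tau, \sigma) = U(\tau, 0) + \int_0^\sigma f(\tau,\xi) \, d\xi$ and applying Cauchy--Schwarz, one obtains $U^2 \leq 2U(\tau,0)^2 + 2|\sigma|\int_0^{|\sigma|} f^2 \, d\xi$. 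Since $u(\tau, 0) \to 1$, the first piece contributes the $C\varepsilon\int f^2\eta^2\rho$ term on the right-hand side provided $\tau$ is taken large enough depending on $\varepsilon$.

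For the remaining double integral $\int |\sigma|\bigl(\int_0^\sigma f^2\bigr) f^2 \eta^2 \rho$, I would split into $\sigma \geq 0$ and $\sigma \leq 0$ and swap the order of integration exactly as in the preceding lemma. Bounding $f^2(\sigma) \leq M^2$ on the support of $\eta$ and using $\int_\xi^\infty |\sigma|\rho(\sigma) \, d\sigma = 2\rho(\xi)$ for $\xi \geq 0$, the inner integral reduces to a term of the form $2M^2 \int_0^{2A\sqrt{\tau}} f^2(\xi)\rho(\xi)\, d\xi$. Splitting this $\xi$-integral at $A\sqrt{\tau}$ and using the Gaussian tail bound $\int_{A\sqrt{\tau}}^\infty \rho \leq C e^{-A^2\tau/4}$ together with $|f| \leq M$ on the transitional annulus yields $CM^2 \int f^2 \eta^2 \rho + CM^4 e^{-A^2\tau/4}$. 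Since $M = O(1/\sqrt{\tau})$, one may enlarge $\tau_\varepsilon$ so that $CM^2 \leq \varepsilon$, and $e^{-A^2\tau/4} \leq e^{-A^2\tau/8}$, completing the estimate; the $C_\varepsilon\int f^4\theta^4\rho$ term in the statement is absorbed trivially (indeed, taking $C_\varepsilon = 0$ suffices for this single inequality, though a nonzero choice may be convenient when combining with the other components of $F$).

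The main obstacle is the bookkeeping on the transitional annulus $A\sqrt{\tau} < |\sigma| \leq 2A\sqrt{\tau}$, where $\eta$ is no longer identically $1$; the Fubini swap routes $\xi$ through this region and one must verify that the resulting Gaussian tail produces a factor whose exponent is at least $A^2\tau/8$, matching the statement. The rest of the argument is essentially routine once the pointwise control $(1/u^2-1)^2 \leq CU^2$ is established and the integral representation $U - U(\tau,0) = \int_0^\sigma f$ is exploited to trade $(u-1)^2$ against a Cauchy--Schwarz moment of $f$.
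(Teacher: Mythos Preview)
Your argument is correct for the lemma as stated, but it takes a genuinely different route from the paper's. Both approaches begin the same way: write $\frac{1}{u^2}-1$ (or equivalently $U$) as its value at $\sigma=0$ plus an integral of $f$, then reduce via Cauchy--Schwarz to controlling $\int |\sigma|\bigl(\int_0^\sigma f^2\bigr) f^2 \eta^2 \rho$. From here the paper integrates by parts using $dv = \tfrac{\sigma}{2}\rho\,d\sigma$, $v=-\rho$, which produces $\int f^4\eta^2\rho$ plus two cross terms; one of those is estimated by Cauchy--Schwarz together with the preceding lemma, and this is exactly where the $C_\varepsilon\int f^4\theta^4\rho$ term appears (with $C_\varepsilon \sim N^2/\varepsilon$). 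Your Fubini--and--bound-$f^2\leq M^2$ argument is more elementary and does yield the stated inequality with $C_\varepsilon=0$, but it relies on the extra a~priori decay $M = O(1/\sqrt\tau)$ from Proposition~\ref{basicderestim} to absorb the resulting $CM^2\int f^2\eta^2\rho$ into $C\varepsilon\int f^2\eta^2\rho$.

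The trade-off matters downstream. The paper's integration-by-parts route is what gives the refined structure of Corollary~\ref{reffeqlocest}, where $\varepsilon(\tau)$ is a free function and the $f^4\theta^4$ coefficient scales like $N^2/\varepsilon(\tau)^2$. In Section~\ref{exp} this is exploited with $\varepsilon(\tau)=e^{-\beta\tau/2}$ to conclude $\|F\|\leq Ce^{-3\beta\tau/2}$ from $\|f\eta\|\leq Ce^{-\beta\tau}$. Your estimate would instead contribute a term of order $M^2\|f\eta\|^2 \sim \tau^{-1}e^{-2\beta\tau}$, which is too large for that step. So your proof is fine for this lemma in isolation, but if you follow this path you will need a different argument (or a return to the paper's integration by parts) when you reach the exponential case.
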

\begin{proof}
Let us denote $u(\tau, 0)$ by $u_0$ and denote by $C_0$ any constant that depends only on $g_0$. For $\tau\geq \tau(g_0, \varepsilon)$ we can take $(\frac{1}{u_0^2}-1)^2 \leq \varepsilon$. We have
\begin{align*}
    & \frac{1}{u^2} = \frac{1}{u_0^2} + \int_0^\sigma (-2)\frac{f}{u^2}\, d\xi \Rightarrow (\frac{1}{u^2}-1)^2 \leq 2 (\frac{1}{u_0^2}-1)^2 + 8\left( \int_0^\sigma \frac{f}{u^2} \, d\xi \right)^2 \\
    & \leq 2 \varepsilon + 8 |\sigma| \left|\int_0^\sigma \frac{f^2}{u^4}\, d\xi \right| \leq 2 \varepsilon + C \left| \frac{\sigma}{2} \right| \left| \int_0^\sigma f^2(\xi)\, d\xi \right| \Rightarrow\\
    & \int \left( \frac{1}{u^2}-1 \right)^2 f^2 \eta^2 \rho \leq 2\varepsilon \int f^2 \eta^2 \rho + C\int \left|\frac{\sigma}{2}\right| \left|\int_0^\sigma f^2(\xi)\, d\xi \right| f^2 \eta^2 \rho \, d\sigma
\end{align*}
For estimating the second term, we integrate by parts by taking $dv=\frac{\sigma}{2}\rho\,d\sigma$. (Note that since $f^2$ is positive we can drop the absolute values)
\begin{align*}
    & \int \frac{\sigma}{2} \left(\int_0^\sigma f^2(\xi)\, d\xi \right) f^2 \eta^2 \rho \, d\sigma = \\
    & \int f^4 \eta^2 \rho + 2 \int \left(\int_0^\sigma f^2(\xi)\, d\xi \right) ff_\sigma \eta^2 \rho \, d\sigma + 2 \int \left(\int_0^\sigma f^2(\xi)\, d\xi \right) f^2 \eta \eta_\sigma \rho \, d\sigma\\
    & \leq \int f^4 \theta^4 \rho + 2 \left( \int f^2 \eta^2 \rho \right)^\frac{1}{2} \left( \int \left( \int_0^\sigma f^2(\xi)\, d\xi  \right)^2 f_\sigma^2 \eta^2 \rho \, d\sigma \right)^\frac{1}{2} \\
    & + 2 \left( \int f^4 \eta^2 \rho \right)^\frac{1}{2} \left( \int \left( \int_0^\sigma f^2(\xi)\, d\xi  \right)^2  \eta_\sigma^2 \rho \, d\sigma \right)^\frac{1}{2} \leq \\
    & \int f^4 \theta^4 \rho + \varepsilon \int f^2 \eta^2 \rho + \frac{CN^2}{\varepsilon} \int f^4 \theta^4 \rho + \int f^4 \theta^4 \rho + CM^4 \exp(-\frac{A^2}{8} \tau)
\end{align*}
where $C$ is an independent constants. In the last line we have used the previous lemma as well as
$$
\int \left( \int_0^\sigma f^2(\xi)\, d\xi  \right)^2  \eta_\sigma^2 \rho \, d\sigma \leq M^4 \int \sigma^2  \eta_\sigma^2 \rho \, d\sigma \leq CM^4 \int_{|\sigma|\geq A\sqrt{\tau}}\sigma^2e^\frac{-\sigma^2}{4}
$$
and this integral is less than $C \exp(- \frac{A^2}{8} \tau)$.
\end{proof}

\begin{lem}
For $\tau \geq \tau_0$ and some universal $C$ we have
\begin{gather*}
    \int f^6 \eta^2 \rho \leq M^2 \int f^4 \theta^4 \rho\\
    |E| \leq C(M+N)\\
    \| E \|^2(\tau) = \int E^2(\sigma, \tau) e^{-\frac{\sigma^2}{4}}\, d\sigma \leq C (M^2+N^2) \exp(- \frac{A^2}{8} \tau)
\end{gather*}
\end{lem}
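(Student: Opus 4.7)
The plan is to dispatch the three inequalities separately, each by direct estimation, since all of the hard analytic work (the global bounds $u\geq \tfrac12$, $|f|\leq M$, $|f_\sigma|\leq N$ on $|\sigma|\leq 4A\sqrt{\tau}$) has already been established in Section \ref{apriori} and the previous two lemmas.

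For the first inequality, I would use the nesting of the cut-offs: by the choice $\eta(\tau,\sigma)=\chi(\sigma/(A\sqrt{\tau}))$ and $\theta(\tau,\sigma)=\chi(\sigma/(2A\sqrt{\tau}))$ with the prescribed $\chi$, the support of $\eta$ is contained in $\{|\sigma|\leq 2A\sqrt{\tau}\}\subseteq\{\theta\equiv 1\}$, hence $\eta^2\leq \theta^4$ pointwise (both sides lie in $[0,1]$, and on supp$(\eta)$ we have $\theta^4=1\geq \eta^2$). Since $|f|\leq M$ on supp$(\eta)\subseteq$ supp$(\theta)$, we can pull an $f^2\leq M^2$ out and obtain
\[
\int f^6\eta^2\rho\,d\sigma \;\leq\; M^2\int f^4\eta^2\rho\,d\sigma \;\leq\; M^2\int f^4\theta^4\rho\,d\sigma.
\]

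For the pointwise bound $|E|\leq C(M+N)$, I would compute the derivatives of $\eta$ explicitly. With $r=\sigma/(A\sqrt{\tau})$, one has $\eta_\sigma=\chi'(r)/(A\sqrt{\tau})$, $\eta_{\sigma\sigma}=\chi''(r)/(A^2\tau)$ and $\eta_\tau=-\chi'(r)\sigma/(2A\tau^{3/2})$, all supported in $A\sqrt{\tau}\leq |\sigma|\leq 2A\sqrt{\tau}$. On this annulus, $|\sigma\eta_\sigma|$ is bounded by a universal constant, $|\eta_\sigma|$ and $|\eta_{\sigma\sigma}|$ are $O(\tau^{-1/2})$ and $|\eta_\tau|=O(\tau^{-1})$. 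Combining with $|f|\leq M$ and $|f_\sigma|\leq N$ on supp$(\eta)$ and assuming $\tau_0\geq 1$ yields
\[
|E| \;\leq\; |f||\eta_\tau|+|f||\eta_{\sigma\sigma}|+2|f_\sigma||\eta_\sigma|+\tfrac{|\sigma|}{2}|f||\eta_\sigma| \;\leq\; C(M+N),
\]
where the constant is universal.

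The third inequality is the main point but is also short. Exploiting that $E$ is supported in the annulus $A\sqrt{\tau}\leq |\sigma|\leq 2A\sqrt{\tau}$, the pointwise estimate above (refined so as to track $M^2$ and $N^2$ separately via $(a+b)^2\leq 2(a^2+b^2)$) gives
\[
\|E\|^2(\tau) \;\leq\; C(M^2+N^2)\int_{|\sigma|\geq A\sqrt{\tau}} e^{-\sigma^2/4}\,d\sigma.
\]
Splitting the Gaussian as $e^{-\sigma^2/4}=e^{-\sigma^2/8}\cdot e^{-\sigma^2/8}$ and using $\sigma^2\geq A^2\tau$ on the integration domain bounds the first factor by $e^{-A^2\tau/8}$ uniformly, while $\int e^{-\sigma^2/8}d\sigma$ is a fixed universal constant. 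This yields
\[
\|E\|^2(\tau) \;\leq\; C(M^2+N^2)\exp\!\left(-\tfrac{A^2}{8}\tau\right),
\]
as claimed. No real obstacle arises; the only point requiring care is keeping the polynomial prefactors $(M+N)$ and $(M^2+N^2)$ separated from the Gaussian tail, which is automatic once one notes that all growing factors of $|\sigma|\leq 2A\sqrt{\tau}$ on supp$(\eta)$ are absorbed by a negligible loss in the exponent.
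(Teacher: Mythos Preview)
Your proof is correct and follows essentially the same approach as the paper: compute the derivatives of $\eta$ explicitly, note that $E$ is supported in the annulus $A\sqrt{\tau}\leq|\sigma|\leq 2A\sqrt{\tau}$, and bound the Gaussian tail there by $C\exp(-A^2\tau/8)$. Your treatment is in fact slightly cleaner, since you first obtain the uniform pointwise bound $|E|\leq C(M+N)$ (absorbing the $\sigma$ factor via $|\sigma\eta_\sigma|\leq C$) and then integrate, whereas the paper keeps the $\sigma^2$ factor and uses the separate estimate $\int_{|\sigma|\geq a}\sigma^2 e^{-\sigma^2/4}\,d\sigma\leq C\exp(-a^2/8)$.
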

\begin{proof}
We have
\begin{gather*}
    \eta_\tau= \chi'(\frac{\sigma}{A \sqrt{\tau}})(\frac{-\sigma}{2A \sqrt{\tau^3}})\\
    \eta_\sigma = \chi'(\frac{\sigma}{A \sqrt{\tau}}) \frac{1}{A\sqrt{\tau}}\qquad \eta_{\sigma \sigma}= \chi''(\frac{\sigma}{A \sqrt{\tau}}) \frac{1}{A^2\tau} 
\end{gather*}
which are all bounded by some constant. Hence $E$ is non-zero only for $A\sqrt{\tau} \leq |\sigma| \leq 2A \sqrt{\tau}$. Noting that $f, f_\sigma$ are bounded and the bounds for the derivatives of $\eta$ and the estimates
\begin{gather*}
\int_{|\sigma|\geq a} e^\frac{-\sigma^2}{4} \, d\sigma \leq C \exp(-\frac{a^2}{8  })\\
\int_{|\sigma|\geq a} \sigma^2 e^\frac{-\sigma^2}{4} \, d\sigma = \left. -2\sigma e^\frac{-\sigma^2}{4}\right \rvert_{\pm a}^{\pm \infty} + 2 \int_{|\sigma|\geq a} e^\frac{-\sigma^2}{4} \, d\sigma \leq C \exp(-\frac{a^2}{8})
\end{gather*}
for $a>1$ we get what we have claimed.
\end{proof}

Putting all the previous lemmas together, we conclude the proof of Proposition \ref{feqlocest}.

\subsection{Proof of Proposition \ref{idifineq}}

Now we prove the differential inequality for $I$ in Proposition \ref{idifineq}. From now on, we assume $\tau\geq \tau_0$ and 
\begin{gather*}
u\geq \frac{1}{2}\\
|f| \leq \frac{C_0}{\tau} \leq M \\
|f_\sigma|\leq N     
\end{gather*}
on the support of $\theta$ 

Replacing $\eta$ with $\theta$ in equation \ref{feqloc}, multiplying both sides of it by $(f\theta)^3 \sigma^k \rho$ and integrating gives
\begin{gather} \label{ieq1}
    \frac{1}{2}I I_\tau= \int \mathcal{A}(f\theta) (f\theta)^3 \sigma^k \rho -n \int \left( \int_0^\sigma f^2(\xi) \, d\xi \right) f^3 f_\sigma \theta^4 \sigma^k \rho \\
    + \int \left( \frac{1}{u^2}-1 \right)(f\theta)^4 \sigma^k \rho - n\int f^6 \theta^4 \sigma^k \rho + \langle E,(f\theta)^3\sigma^k \rangle \nonumber
\end{gather}
where $\langle \, , \rangle$ is the $L^2(\rho\, d\sigma)$ inner product. The nontrivial part is controlling the first and second term in the equation for $2II_\tau$.

\begin{lem}
\begin{gather*}
    \int \mathcal{A}(f\theta) (f\theta)^3 \sigma^k \rho = \\
    \frac{1}{2}I^2 - 3 \int (f\theta)^2 (\partial_\sigma (f \theta))^2 \sigma^k \rho + \frac{k(k-1)}{4}\int (f\theta)^4 \sigma^{k-2}\rho - \frac{k}{8} I^2
\end{gather*}
\end{lem}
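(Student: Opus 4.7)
The plan is a direct computation via integration by parts, exploiting the Gaussian weight. Write $g=f\theta$ for brevity, expand $\mathcal{A}(g)=g_{\sigma\sigma}-\tfrac{\sigma}{2}g_\sigma+\tfrac12 g$, and handle the three resulting pieces. The zeroth-order term $\tfrac12\int g\cdot g^3\sigma^k\rho=\tfrac12 I^2$ is immediate and accounts for the first term on the right-hand side.

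For the second derivative term, integrate by parts (boundary terms vanish since $\theta$ is compactly supported and $\rho$ decays), using $\rho'=-\tfrac{\sigma}{2}\rho$:
\begin{align*}
\int g_{\sigma\sigma}\, g^3\sigma^k\rho \,d\sigma
 &= -\int g_\sigma\,\partial_\sigma\!\bigl(g^3\sigma^k\rho\bigr)\,d\sigma\\
 &= -3\!\int g^2 g_\sigma^{\,2}\sigma^k\rho\,d\sigma \;-\; k\!\int g^3 g_\sigma \sigma^{k-1}\rho\,d\sigma \;+\;\tfrac12\!\int g^3 g_\sigma \sigma^{k+1}\rho\,d\sigma.
\end{align*}
The drift term contributes
$-\tfrac12\int \sigma g_\sigma g^3\sigma^k\rho\,d\sigma = -\tfrac12\int g^3 g_\sigma\sigma^{k+1}\rho\,d\sigma$,
which exactly cancels the last term above. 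This cancellation between the drift and the $\rho'$-contribution of integration-by-parts is the whole reason for working in the weighted space, and it is the only mildly delicate point; the rest is bookkeeping.

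What remains is $-3\int g^2g_\sigma^{\,2}\sigma^k\rho$, matching the second term on the right-hand side, together with $-k\int g^3 g_\sigma\sigma^{k-1}\rho$. To finish, use $g^3g_\sigma=\tfrac14(g^4)_\sigma$ and integrate by parts once more:
\begin{align*}
-k\!\int g^3g_\sigma\sigma^{k-1}\rho\,d\sigma
 &= -\tfrac{k}{4}\!\int (g^4)_\sigma \sigma^{k-1}\rho\,d\sigma
  = \tfrac{k}{4}\!\int g^4\,\partial_\sigma\!\bigl(\sigma^{k-1}\rho\bigr)\,d\sigma\\
 &= \tfrac{k(k-1)}{4}\!\int g^4\sigma^{k-2}\rho\,d\sigma \;-\;\tfrac{k}{8}\!\int g^4\sigma^k\rho\,d\sigma\\
 &= \tfrac{k(k-1)}{4}\!\int (f\theta)^4\sigma^{k-2}\rho\,d\sigma \;-\;\tfrac{k}{8}I^2,
\end{align*}
which supplies the remaining two terms.

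Combining the four contributions yields the claimed identity. The only conceptual issues are justifying the boundary terms (ensured by compact support of $\theta$, the rapid decay of $\rho$, and smoothness of $f$ for $\tau\geq\tau_0$) and, when $k\leq 1$, interpreting $\sigma^{k-2}$; in the applications $k$ will be chosen to be a large even integer, so this is not a real concern. I expect no further obstacle — this lemma is purely algebraic once the weighted integration-by-parts formula is in hand.
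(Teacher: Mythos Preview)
Your proof is correct and follows essentially the same approach as the paper. The only cosmetic difference is that the paper packages $g_{\sigma\sigma}-\tfrac{\sigma}{2}g_\sigma$ as $\rho^{-1}\partial_\sigma(\rho\,\partial_\sigma g)$ before integrating by parts, whereas you treat the two pieces separately and observe the cancellation; the resulting computation is identical.
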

\begin{proof}
\begin{align*}
    & \int \mathcal{A}(f\theta) (f\theta)^3 \sigma^k \rho = \int \left(\partial_\sigma(\rho \partial_\sigma (f\theta) ) + \frac{1}{2} f\theta \rho \right) (f\theta)^3 \sigma^k = \\
    & - 3 \int (f\theta)^2 (\partial_\sigma (f \theta))^2 \sigma^k \rho - k \int (\partial_\sigma (f \theta))(f\theta)^3 \sigma^{k-1} \rho + \frac{1}{2}I^2
\end{align*}
For the middle term, we integrate by parts again
\begin{align*}
    & - k \int (\partial_\sigma (f \theta))(f\theta)^3 \sigma^{k-1} \rho = - \frac{k}{4} \int \partial_\sigma( (f \theta)^4) \sigma^{k-1} \rho \\
    & = \frac{k(k-1)}{4} \int (f\theta)^4 \sigma^{k-2} - \frac{k}{8} \int f^4 \theta^4 \sigma^k \rho
\end{align*}
\end{proof}

We now define $P$ as
$$
P^2=\int f^4 \theta^4 \sigma^{k-2} \rho
$$
\begin{lem} \label{P}
For any positive $\varepsilon$ and $\delta$, there exists $\tau_{\varepsilon \delta}$ depending on $\varepsilon$ and $\delta$ such that for $\tau\geq \tau_{\varepsilon \delta}$ we have
$$ P^2 =  \int f^4 \theta^4 \sigma^{k-2} \rho \leq \varepsilon \delta^{2-\frac{k}{2}} I \| f\theta \|+ \delta^2 I^2$$
\end{lem}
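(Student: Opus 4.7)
The plan is to split the domain of integration in $P^2 = \int f^4 \theta^4 \sigma^{k-2}\rho\, d\sigma$ at the threshold $|\sigma| = \delta^{-1}$, handling each piece by a different elementary inequality. On the outer region $|\sigma| > \delta^{-1}$, I will simply use the pointwise bound $\sigma^{k-2} = \sigma^{-2}\sigma^k \leq \delta^{2}\sigma^k$, which gives
$$
\int_{|\sigma| > \delta^{-1}} f^4 \theta^4 \sigma^{k-2}\rho \;\leq\; \delta^2 \int f^4\theta^4 \sigma^k \rho \;=\; \delta^2 I^2,
$$
producing exactly the second term in the target inequality.

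The inner region $|\sigma|\leq \delta^{-1}$ is where I need to do real work. My plan is to apply Cauchy--Schwarz in the form $f^4\theta^4\sigma^{k-2}\rho = (f\theta\sqrt{\rho})\cdot(f^3\theta^3\sigma^{k-2}\sqrt{\rho})$, yielding
$$
\int_{|\sigma|\leq \delta^{-1}} f^4 \theta^4 \sigma^{k-2}\rho \;\leq\; \|f\theta\|\,\Bigl(\int_{|\sigma|\leq \delta^{-1}} f^6\theta^6 \sigma^{2k-4}\rho\Bigr)^{1/2}.
$$
Since the hypothesis $u(\tau,\sigma)\to 1$ is smooth on compact sets and in particular gives $f = u_\sigma / u \to 0$ uniformly on $|\sigma|\leq \delta^{-1}$, I can enforce $|f|\leq \varepsilon$ there for $\tau \geq \tau_{\varepsilon\delta}$ sufficiently large, pulling out a factor $\varepsilon^2$ from the sixth-power integrand. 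Next I use $\sigma^{2k-4} = \sigma^{k-4}\cdot\sigma^{k} \leq \delta^{4-k}\sigma^{k}$ on $|\sigma|\leq \delta^{-1}$ (requiring $k \geq 4$, which is fine since Proposition \ref{idifineq} lets us take $k$ as large as we wish), so that
$$
\int_{|\sigma|\leq \delta^{-1}} f^6\theta^6 \sigma^{2k-4}\rho \;\leq\; \varepsilon^2 \delta^{4-k}\int f^4\theta^4 \sigma^{k}\rho \;=\; \varepsilon^2 \delta^{4-k} I^2.
$$
Taking the square root and combining with the Cauchy--Schwarz bound gives the inner contribution $\varepsilon\,\delta^{2-k/2}\,I\,\|f\theta\|$, matching the first term.

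There is no real obstacle; the only sensitive point is ensuring the smallness of $|f|$ on the fixed-$\sigma$ set $|\sigma|\leq \delta^{-1}$, which is where the dependence $\tau_{\varepsilon\delta}(\varepsilon,\delta)$ enters. Adding the two pieces yields the claimed inequality $P^2 \leq \varepsilon\,\delta^{2-k/2}\,I\,\|f\theta\| + \delta^2 I^2$. The refined version stated in Corollary \ref{refquadleqi} (allowing $\delta$ and $\varepsilon$ to depend on $\tau$) is then automatic from this proof by inspection, since everything in the argument is pointwise in $\tau$.
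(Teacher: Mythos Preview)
Your proof is correct and uses the same ingredients as the paper's---the split at $|\sigma|=\delta^{-1}$, the pointwise bound $|f|\leq\varepsilon$ on compact $\sigma$-sets, and Cauchy--Schwarz to relate the integral to $I$ and $\|f\theta\|$---just in a slightly different order: you split first and apply Cauchy--Schwarz only to the inner piece, whereas the paper applies H\"older to all of $P^2$ (writing $\sigma^{k-2}=\sigma^{k/2}\cdot\sigma^{(k-4)/2}$) to pull out a factor of $I$, and only then splits the remaining factor $\bigl(\int f^4\theta^4\sigma^{k-4}\rho\bigr)^{1/2}$. Both routes are equally elementary and yield the identical bound; neither offers a real advantage over the other. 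One minor remark: the refined corollary you invoke at the end is Corollary~\ref{refidifineq}, not Corollary~\ref{refquadleqi}.
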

\begin{proof}
We can choose $\tau_{\varepsilon\delta}$ large enough so that for $\tau\geq \tau_{\varepsilon\delta}$, $|\sigma|\leq \delta^{-1}$ implies $|f|\leq \varepsilon$. By Holder inequality we get
\begin{align*}
    & P^2 =  \int f^4 \theta^4 \sigma^{k-2} \rho \leq \left( \int f^4 \theta^4 \sigma^k \rho \right)^\frac{1}{2}\left(\int f^4 \theta^4 \sigma^{k-4} \rho \right)^\frac{1}{2} \leq \\
    & I\left(\int_{|\sigma| \leq \delta^{-1}} f^4 \theta^4 \sigma^{k-4} \rho \right)^\frac{1}{2}+ I \left(\int_{|\sigma| > \delta^{-1}} f^4 \theta^4 \sigma^{k-4} \rho \right)^\frac{1}{2} \leq\\
    &\delta^{2-\frac{k}{2}} I \left(\int_{|\sigma| \leq \delta^{-1}} f^4 \theta^4  \rho \right)^\frac{1}{2}+ \delta^2 I \left(\int f^4 \theta^4 \sigma^k \rho \right)^\frac{1}{2} \leq \varepsilon \delta^{2-\frac{k}{2}} I \| f\theta \|+ \delta^2 I^2
\end{align*}
\end{proof}

\begin{lem}
For $\tau\geq \tau_0$ as in the beginning of this part and any even number $k$, we have
\begin{gather*}
    \int \left( \int_0^\sigma f^2(\xi) \, d\xi \right) f^3 f_\sigma \theta^4 \sigma^k \rho \, d\sigma \geq\\
    - \frac{1}{4} \int  f^6 \theta^4 \sigma^k \rho -M^4 C_k \, I \exp(-\frac{1}{2}^2\tau) - \frac{C_A}{\tau} C_k I^2
\end{gather*}
where $C_A$ (from Lemma \ref{basicderestim}) depends only on $g_0$ and $A$ and $C_k$ depends only on $k$.
\end{lem}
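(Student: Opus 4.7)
The plan is to apply integration by parts using the identity $f^3 f_\sigma = \tfrac{1}{4}(f^4)_\sigma$. Writing $G(\sigma)=\int_0^\sigma f^2(\xi)\,d\xi$ and noting that $\theta$ is compactly supported so no boundary terms arise, this gives
\[
\int G\,f^3 f_\sigma\,\theta^4\sigma^k\rho\,d\sigma = -\tfrac{1}{4}\int f^4\,\partial_\sigma\bigl(G\,\theta^4\sigma^k\rho\bigr)\,d\sigma.
\]
Expanding the derivative produces four contributions. From $G_\sigma=f^2$ one gets exactly the main term $-\tfrac{1}{4}\int f^6\theta^4\sigma^k\rho$. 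From $\rho_\sigma = -\tfrac{\sigma}{2}\rho$ one gets $+\tfrac{1}{8}\int G\,f^4\theta^4\sigma^{k+1}\rho$. From $\partial_\sigma(\sigma^k)=k\sigma^{k-1}$ one gets $-\tfrac{k}{4}\int G\,f^4\theta^4\sigma^{k-1}\rho$. Finally $\partial_\sigma(\theta^4)$ produces the cut-off term $-\int G\,f^4\theta^3\theta_\sigma\sigma^k\rho$.

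The central observation, and the only place the hypothesis that $k$ is even is used, is a sign argument for the $\sigma^{k+1}$ term: $G$ has the same sign as $\sigma$ (being the integral from $0$ of the non-negative function $f^2$), and $\sigma^{k+1}$ also has the sign of $\sigma$ since $k+1$ is odd, so $G\sigma^{k+1}\geq 0$ pointwise and the entire integrand is non-negative. Consequently this term can simply be discarded when asserting a lower bound. Without this, the same term would contribute a negative error with a large Gaussian moment that one could not absorb into the three allowed pieces of the claimed inequality; this is the only conceptually non-routine step in the argument.

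It remains to estimate the two genuine errors. For the $\sigma^{k-1}$ piece I would use the pointwise bound $|G|\leq M^2|\sigma|$, sharpened via $M\leq C_A/\sqrt{\tau}$ from Proposition \ref{basicderestim}, to obtain $|G\sigma^{k-1}|\leq (C_A^2/\tau)\sigma^k$ on the support of $\theta$; inserting this produces the $\tfrac{C_A}{\tau}C_k I^2$ term directly. For the cut-off term, Cauchy-Schwarz lets me factor out $I$ as
\[
\Bigl|\int G\,f^4\theta^3\theta_\sigma\sigma^k\rho\Bigr| \leq I\cdot\Bigl(\int G^2 f^4\theta^2\theta_\sigma^2\sigma^k\rho\Bigr)^{1/2},
\]
and the remaining factor is controlled by the fact that $\theta_\sigma$ is supported where $|\sigma|\gtrsim A\sqrt{\tau}$ (so $\rho\leq e^{-cA^2\tau}$ there) together with $|G|\leq M^2|\sigma|$ and $|f|\leq M$; the Gaussian tail dominates the polynomial factors and yields a bound of the form $M^4 C_k I\exp(-cA^2\tau)$. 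Assembling the three contributions gives the claim.
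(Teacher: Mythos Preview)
Your proof is correct and follows essentially the same route as the paper: integrate by parts via $f^3f_\sigma=\tfrac14(f^4)_\sigma$, discard the $\sigma^{k+1}$ term by the sign observation, bound the $\sigma^{k-1}$ term using $|G|\le (C_A^2/\tau)|\sigma|$ to recover $I^2$, and control the cut-off term by Cauchy--Schwarz against $I$ plus the Gaussian tail where $\theta_\sigma$ lives. One small remark: the parity of $k$ is used a second time (not only in the $\sigma^{k+1}$ term) when you pass from $|\sigma|^k$ to $\sigma^k$ in the $\sigma^{k-1}$ estimate, so that the result is exactly $I^2$.
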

\begin{proof}
We integrate by parts taking $dv= f^3 f_\sigma \, d\sigma$.
\begin{align} \label{eq1}
    & \int \left( \int_0^\sigma f^2(\xi) \, d\xi \right) f^3 f_\sigma \theta^4 \sigma^k \rho \, d\sigma = \nonumber\\
    & - \frac{1}{4} \int  f^2 f^4 \theta^4 \sigma^k \rho -\int \left( \int_0^\sigma f^2(\xi) \, d\xi \right) f^4 \theta^3 \theta_\sigma \sigma^k \rho \nonumber \\
    & - \frac{k}{4}\int \left( \int_0^\sigma f^2(\xi) \, d\xi \right) f^4 \theta^4 \sigma^{k-1} \rho + \frac{1}{8}\int \left( \int_0^\sigma f^2(\xi) \, d\xi \right) f^4 \theta^4 \sigma^{k+1} \rho
\end{align}
We deal with the second, third and fourth term in the right hand side of \ref{eq1} above in order. For the second term we use Holder inequality
\begin{align*}
    & \int \left( \int_0^\sigma f^2(\xi) \, d\xi \right) f^4 \theta^3 \theta_\sigma \sigma^k \rho \leq\\
    & \left( \int f^4 \theta^4 \sigma^k \rho \right)^\frac{1}{2}\left( \int \left( \int_0^\sigma f^2(\xi) \, d\xi \right)^2 f^4 \theta^2 \theta_\sigma^2 \sigma^k \rho \right)^\frac{1}{2} \leq I \left( \int M^8 \theta^2 \theta_\sigma^2 \sigma^{k+2} \rho \right)^\frac{1}{2} \leq \\
    & M^4 C_k \, I \exp(-\frac{1}{2}A^2\tau)
\end{align*}
for some constant $C_k$ depending only on $k$. In the last equality we have used the fact that $\theta_\sigma \neq 0$ only for $|\sigma|> 2A \sqrt{\tau}$ so
$$ \int \theta_\sigma^2 \sigma^{k+2} \rho \leq \exp(-\frac{4A^2\tau}{8}) \int \sigma^{k+2} \exp(-\frac{\sigma^2}{8}) \, d\sigma$$

For the third term of \ref{eq1} we use $|f|\leq \frac{C_A}{\sqrt{\tau}}$. We have
\begin{align*}
    & \sigma \int_0^\sigma f^2(\xi) \, d\xi \leq \sigma \int_0^\sigma \frac{C_A}{\tau} = C_A\frac{\sigma^2}{\tau} \Rightarrow\\
    & -\frac{k}{2} \int \left( \int_A^\sigma f^2(\xi) \, d\xi \right) f^4 \theta^4 \sigma^{k-1} \rho \geq -\frac{k}{2}\int C_A \frac{\sigma^2}{\tau} f^4 \theta^4 \sigma^{k-2} \rho = -\frac{k}{2}\frac{C_A}{\tau} I^2 
\end{align*}

The fourth term in \ref{eq1} is positive since $k$ is even and $\sigma \int_0^\sigma f^2(\xi) \, d\xi$ is positive.
\end{proof}

We can now finish the proof of Proposition \ref{idifineq}.
\begin{proof}[Proof (of Proposition \ref{idifineq})]
First observe that
\begin{align*}
    & \langle E,(f\theta)^3\sigma^k \rangle = \int E(\sigma) (f\theta)^3\sigma^k \rho \, d\sigma \leq \left( \int E^4 \sigma^k \rho \, d\sigma \right)^\frac{1}{4} \left( \int (f\theta)^4 \sigma^k \rho \, d\sigma \right)^\frac{3}{4} \leq \\
    & \left( \int C(M+N)^4 \sigma^k \exp(-\frac{A^2 \tau}{8})\exp(-\frac{\sigma^2}{8})   \, d\sigma \right)^\frac{1}{4} I^\frac{3}{2} \leq \\
    & C (M+N) C_k \exp(-\frac{1}{32}A^2 \tau) I^\frac{3}{2}
\end{align*}
where we have used the obvious pointwise bound for $E$ and the fact that $E$ is only non-zero for $|\sigma|>A\sqrt{\tau}$. Also note $\frac{1}{u^2}-1 \leq C$ on the support of $\theta$.

We put the previous estimates and \ref{ieq1} together. For any $\varepsilon$ and $\delta$ and large enough $\tau$ we get
\begin{align*}
    & \frac{1}{2}II_\tau\leq \\
    & \frac{1}{2}I^2 - 3 \int (f\theta)^2 (\partial_\sigma (f \theta))^2 \sigma^k \rho + \frac{k(k-1)}{4}\int (f\theta)^4 \sigma^{k-2}\rho - \frac{k}{8} I^2 \\
    & + \frac{n}{4} \int  f^6 \theta^4 \sigma^k \rho +n M^4 C_k \, I \exp(-\frac{1}{2}A^2\tau) + \frac{C_A}{\tau} C_k I^2\\
    & +C I^2 - n\int f^6 \theta^4 \sigma^k \rho + C (M+N) C_k \exp(-\frac{1}{32}A^2 \tau) I^\frac{3}{4} \leq \\
    & (\frac{1}{2}-\frac{k}{8}+\frac{C_A}{\tau} C_k+C)I^2 + \frac{k(k-1)}{4}(\varepsilon \delta^{2-\frac{k}{2}} I \| f\theta \|+ \delta^2 I^2) \\
    &  + n M^4 C_k \, I \exp(-\frac{1}{2}A^2\tau)+ C (M+N) C_k \exp(-\frac{1}{32}A^2 \tau) I^\frac{3}{2}
\end{align*}
Cancelling the $\frac{1}{2}I$ and using Cauchy inequality on the term with $I^\frac{1}{2}$ we get
\begin{align*}
    & I_\tau \leq (2-\frac{k}{4}+\frac{2 C_A}{\tau} C_k+2C + \frac{k(k-1)}{2} \delta^2)I+\frac{k(k-1)}{2} \varepsilon \delta^{2-\frac{k}{2}} \| f\theta \|\\
    & + 2 n M^4 C_k \, \exp(-\frac{1}{2}A^2\tau)+ C^2 (M+N)^2 C_k^2 \exp(-\frac{1}{16}A^2 \tau)
\end{align*}

Now we proceed as follows: First we pick $k$ even and large enough so that $2 + 2C_\ell \leq \frac{k}{24}$ and $\frac{k}{8}>a$. Then we pick $\delta$ small so $\frac{k(k-1)}{2} \delta^2 \leq \frac{k}{24}$ and $\varepsilon$ such that $\frac{k(k-1)}{2}\varepsilon \delta^{2-\frac{k}{2}} \leq \varepsilon_1$. This gives us $\tau_{\varepsilon \delta}$ of Lemma \ref{P}. Now we pick $\tau_2 \geq \tau_0 , \tau_{\varepsilon\delta}$ so large (depending on $k$ and $A$) that $C_k \exp(- \frac{1}{2} A^2\tau)\, , \, C_k^2 \,\exp(-\frac{1}{16}A^2\tau) \leq \exp(-\frac{1}{32}A^2\tau)$ and $\frac{2C_A}{\tau} C_k \leq \frac{k}{24}$ for $\tau\geq \tau_2$.
\end{proof}

\subsection{Proofs of Proposition \ref{syswexp}}

We only need to put Propositions \ref{feqlocest} and \ref{idifineq} together. We have the differential inequalities
\begin{gather*}
    \frac{dx}{d\tau}  \geq \frac{1}{2}x - C \| F \| - C\|E\| \\
    |\frac{dy}{d\tau} | \leq  C \| F \| + C\|E\| \\
    \frac{dz}{d\tau}  \leq -\frac{1}{2}z + C \| F \| + C\|E\|
\end{gather*}
for some $C$, where for $c=\frac{1}{16}$ and $\tau\geq \tau_0$, $F$ and $E$ satisfy the estimates
\begin{gather*}
    \|F\| \leq C_0 \varepsilon \| f \eta \|+ C_\varepsilon \| f^2 \theta^2\| + C_0 \exp(- c A^2 \tau)\nonumber \\
    \|E\| \leq C_0 \exp(- c A^2 \tau)
\end{gather*}

Assume $k$ is known in the definition of $I$ and pick any $\varepsilon_1>0$. We can pick a small enough $\varepsilon$ in Proposition \ref{feqlocest} and then small enough $\varepsilon$ and $\delta$ in inequality \ref{quadleqi} to get $\tau_1(\tau_0, \varepsilon_1)$, such that for $\tau \geq \tau_1$ we have
\begin{gather*}
    C \| F \| + C\|E\| \leq  \varepsilon_1(x+y+z) + \varepsilon_1\| f\theta \| + \varepsilon_1 I+ C_0 \exp(-cA^2 \tau)
\end{gather*}

Now note that for $\tau \geq \tau_0$
$$
|\| f\theta \|^2 - \| f\eta \|^2 | \leq \left | \int_{A\sqrt{\tau} \leq |\sigma| \leq 2A\sqrt{\tau}} f^2 e^{-\frac{\sigma^2}{4}}\, d\sigma \right| \leq C M^2 \exp(-cA^2 \tau)
$$
So we can replace the $\|f\theta\|$ term with $\|f\eta\| \leq x+y+z$ and get the system
\begin{gather*}
    \frac{dx}{d\tau}  \geq \frac{1}{2}x - 2\varepsilon_1(x+y+z) - \varepsilon_1 I- C_0 \exp(-cA^2 \tau) \\
    |\frac{dy}{d\tau} | \leq  2\varepsilon_1(x+y+z) + \varepsilon_1 I+ C_0 \exp(-cA^2 \tau) \\
    \frac{dz}{d\tau} \leq -\frac{1}{2}z + 2\varepsilon_1(x+y+z) + \varepsilon_1 I + C_0 \exp(-cA^2 \tau)
\end{gather*}

By picking $a> \frac{1}{2}$ in Proposition \ref{idifineq}, we have
\begin{gather*}
    \frac{d}{d\tau} I \leq -\frac{1}{2} I + \varepsilon_1 \| f\theta \| + C_0\exp(-cA^2\tau)
\end{gather*}
for $\tau \geq \tau_2(\tau_0, k, A, \varepsilon_1)$. Letting $\zeta(\tau) = z(\tau)+ I(\tau)$ we get
\begin{gather*}
    \frac{dx}{d\tau}  \geq \frac{1}{2}x - 3\varepsilon_1(x+y+\zeta) - C_0 \exp(-cA^2 \tau) \\
    |\frac{dy}{d\tau} | \leq  3\varepsilon_1(x+y+\zeta) + C_0 \exp(-cA^2 \tau) \\
    \frac{d\zeta}{d\tau} \leq -\frac{1}{2}\zeta + 3\varepsilon_1(x+y+\zeta) + C_0 \exp(-cA^2 \tau)
\end{gather*}
for some universal $c$ and $\tau$ sufficiently large. Since $\varepsilon_1$ was arbitrary, we are done.

\section{The Neutral Case}\label{neutral}

In this section we handle the case $x+\zeta = o(y)$ in Proposition \ref{MZ} and derive the asymptotic ODE governing the coefficient of neutral mode of $f$. In particular, we are assuming
\begin{gather} \label{atmostexpdecayrem} 
\| f\eta \| \geq C_0 \exp(-\Lambda \tau) \gg C\exp(-cA^2\tau)
\end{gather}

Recall that we have set $U=\log u$, and we have the following equations for $U\eta$ and $f\eta$.
\begin{gather*}
    (U\eta)_\tau = \mathcal{L}(U\eta) + \mathcal{N}(U)\eta - \mathcal{G}\eta + \mathcal{E}
\end{gather*}
\begin{gather*}
    (f\eta)_\tau= \mathcal{A}(f\eta) -n\left( \int_0^\sigma f^2(\xi)\, d\xi  \right) f_\sigma \eta+ \left( \frac{1}{u^2}-1 \right)f\eta -nf^3\eta + E
\end{gather*}
The quadratic term in the ODE for the dominant mode of $f$ comes from $(\frac{1}{u^2}-1)f$, while the other terms are essentially cubic. We will show that the inner product of these cubic terms and $h_1$, as well as the part of the quadratic term coming from non-dominant modes of $f$ and $u$, are in fact negligible in the ODE. Controlling the quadratic term here is the main reason for the second stronger assertion in Proposition \ref{nocstforu}. Since one mode of $f\eta$ is dominating the rest, the unstable mode of $U=\log u \approx \frac{-1}{2}(\frac{1}{u^2}-1)$ corresponding to eigenvalue $1$ gets arbitrary small compared to $f$ by Proposition \ref{nocstforu}.

We need some general results about projections of related functions on eigenspaces $\mathfrak{H}_m$. Recall that the unit eigenfunctions $h_m$ spanning $\mathfrak{H}_m$ are related to standard Hermite polynomials $H_m$ through $h_m (\sigma) = c_m H_m(\frac{\sigma}{2})$ where $c_m = \left(\frac{1}{2^m \sqrt{4\pi} m!}\right)^\frac{1}{2}$. They satisfy the recursive relations
\begin{gather}
    h_{m+1} = \frac{1}{\sqrt{2m+2}}(\sigma h_{m}-2h_{m}')\\
    h_{m+1}' = \sqrt{\frac{m+1}{2}} h_{m}
\end{gather}
 
\begin{lem}\label{recurs2}
For any function $F$ with appropriate smoothness and decay we have
\begin{gather}
    \langle F_\sigma \, , \, h_{m-1} \rangle = \sqrt{\frac{m}{2}} \langle F \, , \, h_{m} \rangle\\
    \langle \sigma F \, , \, h_{m} \rangle = 2\langle F_\sigma \, , \, h_{m} \rangle + \sqrt{2m}\langle F \, , \, h_{m-1} \rangle\\
    \langle \int_0^\sigma F(\xi)\, d\xi \, , \, h_{0} \rangle \leq C \left( \|F\|+\|F_\sigma\| \right)
\end{gather}
with the convention $h_{-1}=0$.
\end{lem}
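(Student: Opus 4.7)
The plan is to establish all three identities by integration by parts, exploiting the Gaussian weight $\rho(\sigma) = e^{-\sigma^2/4}$ (which satisfies $\rho' = -\tfrac{\sigma}{2}\rho$) together with the recursive relations for the modified Hermite polynomials listed just above the lemma. Under any reasonable smoothness and decay assumption on $F$, the Hermite polynomials combined with $\rho$ give super-exponential decay, so boundary terms in every integration by parts vanish.

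For the first identity, I would write
\[
\langle F_\sigma, h_{m-1}\rangle = \int F_\sigma\, h_{m-1}\,\rho\, d\sigma
\]
and move the derivative off $F$, producing $-\int F(h_{m-1}\rho)'\,d\sigma = -\langle F, h_{m-1}'\rangle + \tfrac{1}{2}\langle F, \sigma h_{m-1}\rangle$. Then the first recursion, rewritten as $\sigma h_{m-1} - 2 h_{m-1}' = \sqrt{2m}\, h_m$, converts the right side into $\sqrt{m/2}\,\langle F, h_m\rangle$. For the second identity, I would apply the same integration by parts to $2\langle F_\sigma, h_m\rangle$ to obtain
\[
2\langle F_\sigma, h_m\rangle = -2\langle F, h_m'\rangle + \langle \sigma F, h_m\rangle,
\]
and then use $h_m' = \sqrt{m/2}\, h_{m-1}$ to rearrange into the claimed form.

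For the third identity, the key observation is that $G(\sigma) = \int_0^\sigma F(\xi)\,d\xi$ vanishes at the origin, so I would split the integration by parts across $0$ and choose an antiderivative of $\rho$ that decays at the appropriate end on each half-line: $W^+(\sigma) = -\int_\sigma^\infty \rho(r)\,dr$ on $(0,\infty)$, and $W^-(\sigma) = \int_{-\infty}^\sigma \rho(r)\,dr$ on $(-\infty,0)$. Since $G(0) = 0$ and $W^\pm$ vanish at $\pm\infty$, there are no boundary terms, giving
\[
\int_{-\infty}^\infty G\,\rho\, d\sigma = -\int_0^\infty F\, W^+\, d\sigma - \int_{-\infty}^0 F\, W^-\, d\sigma.
\]
A standard estimate shows $|W^\pm(\sigma)| \leq C\rho(\sigma)/(1+|\sigma|)$, so $|W^\pm|^2/\rho$ is integrable; Cauchy--Schwarz in the weighted space then yields $|\langle G, h_0\rangle| \leq C\|F\|$, which is in fact a bit stronger than the stated bound (the $\|F_\sigma\|$ term on the right is not needed but is harmless).

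The main bookkeeping obstacle is the third identity, since the natural antiderivative of $\rho$ on all of $\mathbb{R}$ does not vanish at both infinities; handling this by splitting at $\sigma = 0$ (where $G$ vanishes by construction) and using a different antiderivative on each half-line is the crucial step. The first two identities are routine applications of integration by parts once one recognizes that $h_{m-1}' = \tfrac{1}{2}\sigma h_{m-1} - \tfrac{\sqrt{2m}}{2}h_m$, which is exactly the combination one encounters after differentiating $h_{m-1}\rho$.
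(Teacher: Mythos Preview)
Your proof is correct. The paper does not actually write out a proof of this lemma; it is stated immediately after the two recursion relations $h_{m+1}=\tfrac{1}{\sqrt{2m+2}}(\sigma h_m-2h_m')$ and $h_{m+1}'=\sqrt{\tfrac{m+1}{2}}h_m$ and is evidently meant to follow from them by exactly the integration-by-parts arguments you supply. Your observation that the third bound really only needs $\|F\|$ (the $\|F_\sigma\|$ term is superfluous) is also correct and consistent with how the paper uses the inequality later.
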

We use this lemma to compare the modes of $U\eta$, $f\eta$ and the non-local term $\mathcal{G}$. Before continuing, we emphasize the following general remark to make the computations more tractable.
\begin{rem}
Suppose a function $F(\tau, \sigma)$ and its partial derivative $F_\sigma$ are bounded (or even have polynomial growth in $\sigma$), and take $\eta=\chi\left( \frac{\sigma}{A\sqrt{\tau}} \right)$ and $\theta=\chi\left( \frac{\sigma}{2A\sqrt{\tau}} \right)$ as before. Let $\mathcal{F} = F\theta$. We have the estimates
\begin{gather*}
    \| \mathcal{F} - F \mathds{1}_{supp \, \eta}\|^2 \leq C \int_{|\sigma|\geq A \sqrt{\tau}} F e^{\frac{-\sigma^2}{4}} \leq C \exp(-cA^2\tau)\\
    \| F_\sigma \theta - F_\sigma \mathds{1}_{supp \, \eta}\|^2 \leq C \int_{|\sigma|\geq A \sqrt{\tau}} F_\sigma e^{\frac{-\sigma^2}{4}} \leq C \exp(-cA^2\tau)\\
    \| \mathcal{F}_\sigma - F_\sigma \mathds{1}_{supp \, \eta}\|^2 \leq C \int_{|\sigma|\geq A \sqrt{\tau}} \big( F+F_\sigma \big) e^{\frac{-\sigma^2}{4}} \leq C \exp(-cA^2\tau)
\end{gather*}
These estimates show that up to an exponentially small error, we can work with $F$ and $F_\sigma$ instead of their localizations if we are only working on the support of $\eta$. All the functions that we apply Lemma \ref{recurs2} to are bounded. Since we have chosen $A$ large compared to $\Lambda$ as in \ref{atmostexpdecayrem}, we do not need to worry about such errors.
\end{rem}

Throughout this section, $a(\tau)$ denotes the neutral coefficient of $f\eta$:
\begin{gather*}
    a(\tau)= \langle f\eta(\tau) \, , \, h_1 \rangle = \int (f\eta)(\tau, \sigma) h_1(\sigma) e^{-\frac{\sigma^2}{4}}\, d\sigma
\end{gather*}
The ODE satisfied by $a(\tau)$ is
\begin{gather}
    \frac{d}{d\tau} a = \left\langle (\frac{1}{u^2}-1)f\eta \, , \, h_1 \right \rangle -n \left \langle \left( \int_0^\sigma f^2(\xi)\, d\xi  \right)f_\sigma \eta +f^3\eta \, , \, h_1 \right \rangle + \left\langle E \,, \, h_1 \right\rangle
\end{gather}
We deal with different terms one by one.

First, we deal with the quadratic term $(\frac{1}{u^2}-1)f\eta = (e^{-2U}-1)f\eta$ and compute the leading term of the ODE.

\begin{prop}\label{quad}
For any $\varepsilon$ and large enough $\tau$ we have
\begin{gather*}
    \left\langle (e^{-2U}-1)f\eta\, , \, h_1 \right\rangle = -\frac{2}{\sqrt[4]{\pi}}a^2(\tau) + \epsilon(\tau)
\end{gather*}
where $|\epsilon(\tau)| \leq \varepsilon a^2(\tau)$.
\end{prop}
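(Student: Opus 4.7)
My plan is to expand the exponential nonlinearity to linear order in $U$, perform an integration by parts to convert the resulting integral to an inner product of $U^2$ against an eigenfunction, and then exploit the one-mode structure of $f\eta$ in the neutral regime to pick off the leading coefficient $-\tfrac{2}{\sqrt[4]\pi}a^2$.

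Since $|U|\leq C_A$ on the support of $\eta$, write $e^{-2U}-1=-2U+R(U)$ with $|R(U)|\leq CU^2$. For the main term, use $2Uf=(U^2)_\sigma$ and integrate by parts:
$$-2\int Uf\eta h_1\rho\, d\sigma=\int U^2(\eta h_1\rho)_\sigma\, d\sigma=\int U^2 \eta_\sigma h_1\rho\, d\sigma+\int U^2\eta (h_1\rho)_\sigma\, d\sigma.$$
Using $\rho'=-\sigma\rho/2$, $h_1'=h_0/\sqrt{2}$, and $\sigma h_1=\sqrt{2}\,h_0+2h_2$ (from Lemma \ref{recurs2}) one checks $(h_1\rho)_\sigma=-h_2\rho$. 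The $\eta_\sigma$ term is supported on $A\sqrt\tau\leq|\sigma|\leq 2A\sqrt\tau$ and is $O(e^{-cA^2\tau})$ since $|U|\leq C_A$ there. Thus $-2\int Uf\eta h_1\rho=-\int U^2\eta h_2\rho+O(e^{-cA^2\tau})$.

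To compute $\int U^2\eta h_2\rho$ in the neutral case, expand $U\eta=\sum_k b_k h_k$. Applying Lemma \ref{recurs2} to $(U\eta)_\sigma=f\eta+U\eta_\sigma$ gives $b_{k+1}=\sqrt{2/(k+1)}\,a_k+O(e^{-cA^2\tau})$, so $b_2=a(\tau)+O(e^{-cA^2\tau})$ while $b_{k+1}=o(|a|)$ for $k\neq 1$ because $x+\zeta=o(y)$. For the $h_0$ coefficient, the refined (second) conclusion of Proposition \ref{nocstforu}, which applies since the Domination Condition holds with $m=1$, gives $b_0=o(|a|)$. Therefore $U\eta=a\, h_2+W_0$ with $\|W_0\|=o(|a|)$. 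Because $|U|\leq C_A$ on $\operatorname{supp}\eta$, one may replace $\int U^2\eta h_2\rho$ by $\int(U\eta)^2 h_2\rho$ up to exponentially small error, and expand as $a^2\int h_2^3\rho+2a\int W_0 h_2^2\rho+\int W_0^2 h_2\rho$. A direct Gaussian-moment calculation using $h_2=\frac{(4\pi)^{-1/4}}{2\sqrt 2}(\sigma^2-2)$ yields $\int h_2^3\rho=\frac{2}{\sqrt[4]\pi}$, which furnishes the claimed coefficient. The cross term is $o(a^2)$ by Cauchy--Schwarz, since $\|h_2^2\|_{L^2(\rho)}$ is a finite constant.

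It remains to show that the Taylor remainder $\int R(U)f\eta h_1\rho$ and the quadratic error $\int W_0^2 h_2\rho$ are $o(a^2)$. For the first, the identity $3U^2 f=(U^3)_\sigma$ and the same integration by parts reduce matters to controlling $\int U^3\eta h_2\rho$; split at $|\sigma|=R$, use the pointwise bound $|U|\leq C_R|a|$ on the compact part from Lemma \ref{betterbaseptest} (yielding $O(|a|^3)=o(a^2)$), and Gaussian decay of $|h_2|\rho$ together with $|U|\leq C_A$ on the tail, with $R=R(\tau)$ growing logarithmically to absorb the $C_A$ factor. For the second, use $\|W_0'\|\leq\|f\eta-a h_1\|+\|U\eta_\sigma\|=\sqrt{x^2+z^2}+O(e^{-cA^2\tau})=o(|a|)$ and a weighted Sobolev interpolation to upgrade the $L^2$ smallness of $W_0$ to pointwise smallness on compact sets, while the unbounded weight $|h_2|\sim\sigma^2$ in the tail is absorbed by Gaussian decay of $\rho$ and the crude bound $|W_0|\leq C_A+|a||h_2|$. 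The main obstacle is precisely this last estimate: $\|W_0\|=o(|a|)$ alone is too weak against the quadratic growth of $h_2$, and it is at this step that both the refined conclusion of Proposition \ref{nocstforu} and the improved bound $\|W_0'\|=o(|a|)$ become essential.
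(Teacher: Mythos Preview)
Your route for the main term is different from the paper's but correct: you integrate by parts via $2Uf=(U^2)_\sigma$ and $(h_1\rho)_\sigma=-h_2\rho$ to land on $\int(U\eta)^2 h_2\rho$, then exploit the decomposition $U\eta=a\,h_2+W_0$; the paper instead expands $\langle\sigma U,f\eta\rangle$ through the Hermite recursions in Lemma~\ref{recurs2} and uses the one-mode structure of $f\eta$ directly. Both correctly identify the leading coefficient $a^2\int h_2^3\rho=\tfrac{2}{\sqrt[4]{\pi}}a^2$.

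The gap is in your tail estimates for the two error terms. For both $\int W_0^2 h_2\rho$ and (after your integration by parts) $\int U^3\eta h_2\rho$, you propose to control the region $|\sigma|>R$ using only the crude pointwise bounds $|U|,|W_0|=O(1)$ together with Gaussian decay. This yields a quantity of size $O(e^{-R^2/8})$, and to make it $o(a^2)$ you would need $R\to\infty$ in a way tied to $a$. But at this stage the only lower bound on $|a|$ is the sub-exponential one from the ODE lemma, while the Sobolev constant $C_R$ in your compact-part bound $|U|\leq C_R|a|$ grows like $e^{R^2/8}$; no choice of $R(\tau)$ satisfies both constraints simultaneously.

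For $\int W_0^2 h_2\rho$ the repair is immediate and uses ingredients you already have: the weighted inequality $\int g^2\sigma^2\rho\leq C(\|g\|_\rho^2+\|g_\sigma\|_\rho^2)$ (one integration by parts) applied to $g=W_0$ gives $o(a^2)$ directly from $\|W_0\|,\|W_0'\|=o(|a|)$, with no splitting. For the Taylor remainder the paper avoids the difficulty altogether by \emph{not} integrating by parts: it writes $\langle R(U)f\eta,h_1\rangle=c_1\langle\sigma R(U),f\eta\rangle$, splits $f\eta=f\eta\phi^2+f\eta(1-\phi^2)$ with a \emph{fixed} compact cutoff $\phi$, uses $|R(U)|\leq\varepsilon|U|$ on $\operatorname{supp}\phi$, and on the tail pairs the $O(|a|)$ bound $\|\sigma R(U)\|\leq C(\|R(U)\|+\|R(U)_\sigma\|)\leq C(\|U\eta\|+\|f\eta\|)\leq C|a|$ against the one-mode smallness $\|f\eta(1-\phi^2)\|\leq\varepsilon|a|$. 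The point is that the factor made small on the tail is $f\eta$, whose one-mode structure is the hypothesis of the section, not the remainder in $U$.
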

\begin{proof}
Recall that $h_1= c_1 \sigma = \frac{1}{2\sqrt[4]{\pi}} \sigma$, and $e^{-2U}-1=-2U-2\mathcal{N}$. As before, since $u$ is bounded and away from zero, $U$ is bounded and we have
\begin{gather*}
    |\mathcal{N}|\leq CU^2\\
    |\mathcal{N}_\sigma| = |\frac{\partial}{\partial \sigma}(e^{-2U}-1+2U)| = |-2fe^{-2U}+2f| \leq C|f|\,|U|
\end{gather*}
for some $C$ (depending on $A$).
We have
\begin{gather*}
    \left\langle (e^{-2U}-1)f\eta\, , \, \sigma \right\rangle = -2\left\langle (U+\mathcal{N})f\eta\, , \, \sigma \right\rangle = -2\left\langle (U+\mathcal{N}) \sigma\, , \, f\eta \right\rangle
\end{gather*}

Using Lemma \ref{recurs2} we have
\begin{align}
    & \left\langle \sigma U\, , \, f\eta \right\rangle = \sum_{m=0} \left\langle \sigma U\, , \, h_m \right\rangle \left\langle f\eta \, , \,h_m \right\rangle \nonumber \\
    & = \sum_{m=0} \big( 2 \left\langle f \, , \, h_m\right\rangle + \sqrt{2m} \left\langle U \, , \,h_{m-1} \right\rangle \big) \left\langle f\eta \, , \, h_m \right\rangle \nonumber \\
    & = 2 \left\langle f \, , \, f\eta \right\rangle + \sqrt{2} \left\langle U \, , \, h_0\right\rangle \left\langle f\eta\, , \, h_1\right\rangle+ \sum_{m=2} \sqrt{\frac{4m}{m-1}}\left\langle f\, , \, h_{m-2}\right\rangle \left\langle f\eta\, , \, h_m\right\rangle
\end{align}
Now $|\left\langle U\, , \, h_0\right\rangle| \leq \varepsilon |a(\tau)|$ by Proposition \ref{nocstforu}. Also, if we define $g= \sum_{m=2} \left\langle f\, , \, h_{m-2}\right\rangle h_m$ and $\Tilde{f} = \sum_{m=2} \left\langle f\eta\, , \, h_m\right\rangle h_m$, we have $\| g \|\leq \| f\|$ and $\| \Tilde{f}\| \leq \varepsilon |a(\tau)|$. Therefore
$$ \left| \sum_{m=2} \sqrt{\frac{4m}{m-1}}\left\langle f\, , \, h_{m-2}\right\rangle \right| \leq C |\langle g\, , \, \Tilde{f} \rangle| \leq C \| g\| \| \Tilde{f}\| \leq \varepsilon a^2(\tau)$$
So the only significant term here is the first one, which is $2 \| f\eta\|^2$ up to an exponentially small error. This gives the leading term $$-4c_2 a^2= -\frac{2}{\sqrt[4]{\pi}} a^2$$

Note that as in the proof of Lemma \ref{bode}, we have $\| U\eta \| \leq C \| f\eta \|$. Indeed, disregarding exponentially small terms, $\| U\eta \|^2$ can be estimated as
$$|\langle U\eta \, , \, h_0 \rangle|^2+ \sum_{m \geq 1} |\langle U\eta \, , \, h_m \rangle|^2 \leq \varepsilon a(\tau)^2+ C \sum_{m \geq 0} |\langle f\eta \, , \, h_m \rangle|^2 \leq C \|f\eta\|^2$$
Now for $\varepsilon$, pick $R$ large enough so that for any compactly supported smooth function $\phi$ with $0\leq \phi \leq 1$ and $\phi=1$ on $|\sigma|\leq R$, we have $\| h_1(1-\phi^2)\| \leq \varepsilon$. If we write $f\eta = a(\tau)h_1+V$ with $\| V \|\leq \varepsilon |a(\tau)|$ we get
\begin{gather*}
    \| f\eta (1-\phi^2) \| \leq |a|\| h_1(1-\phi^2) \| + C\| V \| \leq 2\varepsilon |a(\tau)|
\end{gather*}
For such compactly supported $\phi$, we have
\begin{align*}
    & \big|\langle \sigma \mathcal{N}\, , \, f\eta \rangle \big| \leq \big|\langle \sigma \mathcal{N} \phi\, , \, f\eta\phi \rangle \big| +\big| \langle \sigma \mathcal{N}\, , \, f\eta(1-\phi^2) \rangle \big|
\end{align*}
On the compact support of $\phi$, we can assume $|\mathcal{N}| \leq C|U|^2\leq \varepsilon |U|$ so $\| \sigma \mathcal{N} \phi\| \leq \varepsilon \| U\eta\| \leq \varepsilon |a(\tau)|$. On the other hand, we have $\|f\eta(1-\phi^2) \| \leq 2\varepsilon |a(\tau)|$ and  $|\mathcal{N}|\leq C|U|$ and $|\mathcal{N}_\sigma| \leq C|f|$ since $U$ is bounded. Therefore
\begin{align*}
    & \| \sigma \mathcal{N} \| \leq C \big( \|\mathcal{N} \| + \|\mathcal{N}_\sigma \| \big) \leq C \big( \|U\| + \|f \| \big) \leq C |a(\tau)|
\end{align*}
Putting these estimates together we see 
\begin{gather}
    \big|\langle \sigma \mathcal{N}\, , \, f\eta \rangle \big| \leq \varepsilon a^2(\tau)
\end{gather}
and we are done.
\end{proof}

Next, we deal with the middle term. Note that since
\begin{gather*}
    \frac{\partial}{\partial \sigma} \left( \big( \int_0^\sigma f^2 \big) f  \right) = \big( \int_0^\sigma f^2\big)f_\sigma + f^3 
\end{gather*}
we can integrate by parts and rewrite this term as
\begin{gather*}
    n \left\langle \big( \int_0^\sigma f^2 \big) f \eta_\sigma \, , \, h_1 \right\rangle + n \left\langle \big( \int_0^\sigma f^2 \big) f \eta \, , \, \partial_\sigma h_1 - \frac{\sigma}{2}h_1 \right\rangle
\end{gather*}
But $\eta_\sigma$ is nonzero only for $|\sigma|\geq A \sqrt{\tau}$ and we have
\begin{gather*}
    \frac{\sigma}{2}h_1 - \frac{\partial}{\partial \sigma} h_1 = \frac{1}{4}h_2
\end{gather*}
So this is equal to
\begin{gather*}
    \frac{-n}{4} \left\langle \big( \int_0^\sigma f^2 \big) f \eta \, , \, h_2 \right\rangle + O(\exp(-cA^2 \tau))
\end{gather*}
for some universal $c$ since $|f|$ is bounded. We have the following estimate.

\begin{prop} \label{cubic}
For any $\varepsilon$, for large enough $\tau$ we have
\begin{gather}
    \left| \langle \big( \int_0^\sigma f^2 \big) f \eta \, , \, h_2\rangle \right| \leq \varepsilon a^2(\tau)
\end{gather}
\end{prop}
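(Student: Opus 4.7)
The strategy is to exploit the neutral-case hypothesis $x + \zeta = o(y)$, which gives the decomposition $f\eta = a(\tau) h_1 + V$ with $\|V\|_{L^2(\rho)} = o(|a(\tau)|)$. A convenient first step is to integrate by parts using the identity $h_2 \rho = (-2 c_2 \sigma \rho)_\sigma$, which converts the target integral into
$$
\langle (\textstyle\int_0^\sigma f^2) f\eta, h_2 \rangle = 2 c_2 \int f^3 \eta \, \sigma \rho \, d\sigma + 2 c_2 \int \big(\textstyle\int_0^\sigma f^2\big) (f\eta)_\sigma \, \sigma \rho \, d\sigma,
$$
both of which are cubic in $f$, modulo an exponentially small cut-off error coming from $\sigma \rho$ at infinity.

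Substituting $f = a h_1 + V$ on $\{\eta = 1\}$, the first integral expands as $a^3 \int h_1^3 \sigma \rho + 3 a^2 \int h_1^2 V \sigma \rho + 3 a \int h_1 V^2 \sigma \rho + \int V^3 \sigma \rho$. The main term $a^3 \int h_1^3 \sigma \rho$ is a finite constant times $a^3$, hence $o(a^2)$ as $|a| \to 0$. The middle cross terms are controlled by Cauchy-Schwarz together with the identity $\sigma^2 = (1/c_2) h_2 + (2/c_0) h_0$ and the bound $\|V^2\|_{L^2(\rho)} \leq C \|V\|_{L^2(\rho)}$ (which follows from the uniform estimate $|V| \leq M$ on $\mathrm{supp}(\eta)$); they are $O(a^2 \|V\|)$ and $O(|a|\cdot \|V\|)$ respectively, both $o(a^2)$. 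The second integral in the integration by parts is treated analogously, using $(f\eta)_\sigma = a h_1' + V_\sigma$ with $\|V_\sigma\|_{L^2(\rho)} = o(|a|)$ by an analogous Hermite-mode expansion via Lemma \ref{recurs2}.

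The main technical obstacle is the cubic-in-$V$ term $\int V^3 \sigma \rho$, which by $L^2$ bounds alone is only $O(M^2\|V\|) = o(|a|)$ --- insufficient, since we need $o(a^2)$. To close the gap, I would upgrade $\|V\|_{L^2(\rho)} = o(|a|)$ to a pointwise bound $\|V\|_{L^\infty([-R,R])} = o(|a|)$ on a compact set using parabolic regularity applied to the linear equation satisfied by $V$ (obtained by subtracting $a'(\tau) h_1$ from the evolution of $f\eta$ from Lemma \ref{ufeq}); the uniform $C^1$ bounds on $f$ from Section \ref{apriori} provide the necessary background control. The Gaussian tail $|\sigma| > R$ is dominated by $e^{-cR^2}$, and in light of the Decay Condition \ref{atmostexpdecayrem} ($|a| \geq C_0 e^{-\Lambda \tau}$) together with the choice $A \gg \Lambda$, we can pick $R$ growing at most like $\sqrt{\delta \tau}$ to make the tail negligible compared to $\varepsilon a^2$. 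Combining the $O(a^3)$ main term, the pointwise-based bound $o(a^2)$ on each cross term over $[-R,R]$, and the Gaussian tail estimate yields $|\langle (\int_0^\sigma f^2) f\eta, h_2 \rangle| \leq \varepsilon a^2(\tau)$ for $\tau$ sufficiently large.
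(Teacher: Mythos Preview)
Your approach is genuinely different from the paper's and can be made to work, but it is more involved and contains one incorrect justification.

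The paper does \emph{not} integrate by parts via $h_2\rho=(-2c_2\sigma\rho)_\sigma$ or substitute $f\eta=ah_1+V$. Instead it writes the inner product as $\sum_m A_m\langle f\eta,h_m\rangle$ with $A_m=\langle(\sigma^2-2)\int_0^\sigma f^2,\,h_m\rangle$, and then uses the Hermite recursions of Lemma~\ref{recurs2} repeatedly to reduce each $A_m$ to combinations of $\langle f^2,h_j\rangle$ and $\langle ff_\sigma,h_j\rangle$. The argument then closes with purely $L^2$ bounds: $\|f^2\|\le\varepsilon\|f\eta\|$ (from the apriori estimate $|f|\le C/\sqrt{\tau}$ on the support of $\theta$), $\|ff_\sigma\|\le C\|f\|$, and the neutral-case fact $|\langle f\eta,h_m\rangle|\le\varepsilon|a|$ for $m\neq 1$. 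In particular, for the dangerous $m=1$ term (paired with $a$ itself), it is the smallness of $A_1$, i.e.\ of $\|f^2\|$, that gives the extra factor of $\varepsilon$. No pointwise control of $V$ and no parabolic regularity are used.

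Your route trades this Hermite bookkeeping for a direct cubic expansion, at the cost of needing pointwise information on $V$. Two remarks. First, your claim that $\|V_\sigma\|_{L^2(\rho)}=o(|a|)$ follows ``by an analogous Hermite-mode expansion via Lemma~\ref{recurs2}'' is wrong as stated: the recursion gives $\|V_\sigma\|^2=\sum_{k\ge 2}\tfrac{k}{2}|\langle f\eta,h_k\rangle|^2$, and the growing factor $k$ prevents you from deducing $\|V_\sigma\|=o(|a|)$ from $\|V\|=o(|a|)$ alone. You would have to get $\|V_\sigma\|=o(|a|)$ from the same parabolic regularity (Theorem~\ref{class}) that you invoke for the $L^\infty$ bound, after checking that the forcing in the $V$-equation satisfies $\|F+E-a'h_1\|=o(|a|)$; this in turn requires $I=o(|a|)$ from the neutral hypothesis $x+\zeta=o(y)$. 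Second, your treatment of the second integral $\int(\int_0^\sigma f^2)(f\eta)_\sigma\,\sigma\rho$ is only sketched; once you also expand $\int_0^\sigma f^2$ using $f=ah_1+V$, the mixed and $V^2$, $V^3$-type terms can indeed be controlled with the $L^\infty_{[-R,R]}$ bound on $V$ and a Gaussian tail argument on $|\sigma|>R$ with $R\sim\sqrt{\delta\tau}$, but this takes real work.

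In short: your plan is viable, but the paper's approach stays entirely within weighted $L^2$ and avoids both the parabolic-regularity detour and the (erroneous) $\|V_\sigma\|$ step.
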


\begin{proof}
We can write
\begin{gather*}
    \left\langle \big( \int_0^\sigma f^2 \big) f \eta \, , \, h_2\right\rangle = \left\langle h_2 \big( \int_0^\sigma f^2 \big)\, , \, f\eta\right\rangle \\
    = \sum_{m=0}^\infty \left\langle c_2(\sigma^2-2) \big( \int_0^\sigma f^2 \big)\, , \, h_m\right\rangle \left\langle f\eta\, , \, h_m\right\rangle
\end{gather*}
Let
\begin{gather*}
    A_m = \left\langle (\sigma^2-2) \int_0^\sigma f^2\, , \, h_m \right\rangle
\end{gather*}
We use Lemma \ref{recurs2}. Adopting $h_{-1}=h_{-2}=0$ we can write
\begin{align*}
    & A_m= \left\langle \sigma^2 \int_0^\sigma f^2\, , \, h_m \right\rangle -2 \left\langle \int_0^\sigma f^2\, , \, h_m \right\rangle\\
    & = 2 \left \langle \frac{\partial}{\partial \sigma}\left( \sigma \int_0^\sigma f^2 \right)\, , h_m \, \right \rangle + \sqrt{2m} \left \langle \sigma \int_0^\sigma f^2 \, , \, h_{m-1} \right \rangle - 2 \left\langle \int_0^\sigma f^2\, , \, h_m \right\rangle\\
    & = 2\left\langle \sigma f^2\, , \, h_m \right\rangle + 2\sqrt{2m} \left\langle f^2\, , \, h_{m-1} \right\rangle + \sqrt{4m(m-1)}\left\langle \int_0^\sigma f^2\, , \, h_{m-2} \right \rangle
\end{align*}

If $m > 4$ we can further simplify this to
\begin{gather*}
    2\left\langle 2f f_\sigma\, , \, h_m \right\rangle + 4\sqrt{2m} \left\langle f^2\, , \, h_{m-1} \right\rangle + \sqrt{\frac{8m(m-1)}{m-2}}\left\langle f^2\, , \, h_{m-3} \right \rangle=\\
    4\left\langle f f_\sigma\, , \, h_m \right\rangle + 4\sqrt{\frac{4m}{m-1}} \left\langle 2f f_\sigma\, , \, h_{m-2} \right\rangle + \sqrt{\frac{16m(m-1)}{(m-2)(m-3)}}\left\langle 2ff_\sigma\, , \, h_{m-4} \right \rangle
\end{gather*}
Define $g=\sum_{m\geq 4} A_m h_m$ and $\Tilde{f}=\sum_{m\geq 4}  \left\langle f\eta\, , \, h_m \right\rangle h_m$. We have 
\begin{gather*}
\| g\| \leq C \| f f_\sigma\| \leq C \| f \| \\
\| \Tilde{f} \| \leq \varepsilon \| f\eta \|
\end{gather*}
So
\begin{gather}
    \left| \sum_{m\geq 4} A_m \left\langle f\eta, h_m \right\rangle \right| = |\langle g \, , \, \Tilde{f} \rangle| \leq \|g\| \| \Tilde{f} \| \leq C \varepsilon \| f \| \|f\eta\| \leq \varepsilon a^2(\tau)
\end{gather}

For $m \leq 3$, note that $\sqrt{2m}$ is bounded above and that $\langle \sigma f^2 \, , \, h_m \rangle = \langle f^2 \, , \, \sigma h_m \rangle$ where $\sigma h_m$ can be written as a linear combination of finitely many $h_i$s with universally bounded coefficients. So we get
\begin{gather*}
    |2\left\langle \sigma f^2\, , \, h_m \right\rangle + 2\sqrt{2m} \left\langle f^2\, , \, h_{m-1} \right\rangle| |\langle f\eta\, , \, h_m \rangle| \leq C \| f^2 \| \| f\eta \| \leq \varepsilon \| f\eta\|^2
\end{gather*}
So we are left with the $\langle \int_0^\sigma f^2\, , \, h_{m-2} \rangle$ term for $m =2, 3$. We have
\begin{align*}
    &\left|\left\langle \int_0^\sigma f^2\, , \, h_1 \right \rangle \langle f\eta\, , \, h_3 \rangle \right| = \sqrt{2} |\langle f^2\, , \, h_0 \rangle| |\langle f\eta\, , \, h_3 \rangle| \leq C \| f^2 \| \| f\eta \| \leq \varepsilon \| f\eta\|^2 \\
    & \left|\left\langle \int_0^\sigma f^2\, , \, h_0 \right \rangle \langle f\eta\, , \, h_2 \rangle \right| \leq C \big( \|f^2\|+ \|ff_\sigma\| \big) \varepsilon a(\tau) \leq C \| f \| \varepsilon a(\tau) \leq \varepsilon a^2(\tau)
\end{align*}
\end{proof}

For the error term, since $|f|$ and $|f_\sigma|$ are bounded we have
$$|\langle E, h_1\rangle| \leq \|E\| \leq C\exp(-cA^2 \tau)$$
as before. Putting Propositions \ref{cubic} and \ref{quad} together we have the asymptotic ODE
\begin{gather*}
    \frac{d}{d\tau} a = -\frac{2}{\sqrt[4]{\pi}}a^2 + o(1)a^2
\end{gather*}
which, together with $a(\tau) \to 0$ as $\tau \to \infty$, gives
\begin{gather}
    a(\tau) = \frac{\sqrt[4]{\pi}}{2\tau} + o(\frac{1}{\tau})
\end{gather}
Note that the Domination Condition \ref{onemodedoms} holds by assumption. Together with Proposition \ref{nocstforu} and $\langle U , h_2\rangle = \langle f , h_1\rangle $, the expression for $a(\tau)$ implies case 2 in our main Theorem.

\begin{rem}
Note that our normalization for $h_2$ is different from that of \cite{precise} but the final result is the same.
\end{rem}

\section{The Exponential Case} \label{exp}

In this section we deal with the case of exponential decay in Proposition \ref{MZ}, and establish the third case in our main Theorem. As in Proposition \ref{MZ2}, we let $\lambda= \lambda_m$ for $m \geq 2$ be the supremum of all exponential decay rates. We prove that in this case, the $m$-th mode of $f\eta$ dominates the other modes, and derive the ODE for the coefficient of $m$-th mode. From now on, by virtue of Proposition \ref{MZ2}, we may assume
\begin{gather*}
    \| f\eta \| + I \leq C \exp(-\beta \tau)
\end{gather*}
for any $\beta < \lambda =\lambda_m = \frac{m-1}{2}$ and large enough $\tau$. We may also assume the scale $A$ is the cut-off $\eta$ is taken large enough so that
\begin{gather} 
e^{-\lambda} \| f\eta \| \geq C_0 \exp(-2\Lambda \tau) \gg C\exp(-cA^2\tau)
\end{gather}

The main steps of the argument are as follows: First, we derive pointwise bounds for $f\eta$ using standard techniques. Then, using this bound and Lemma \ref{betterbaseptest} and sharper estimates recorded in Corollaries \ref{reffeqlocest}, \ref{refquadleqi} and \ref{refidifineq}, we show that a bound $\| f\eta \|\leq C\exp(-\beta \tau)$ results in a better bound $\| F \| \leq C \exp(-\frac{3}{2}\beta \tau)$ for the non-linearity in the equation for $f\eta$. Using the variation of constants formula, it will then follow that for any $k>m$, the $k$-th mode will have a decay rate strictly faster than $\exp(-\lambda_m \tau)$. Finally, we will derive the ODE for the coefficient of $m$-th mode and show that it must have the exact $\exp(-\lambda_m \tau)$ decay rate.

We have the following standard apriori estimate. We refer to \cite{velher} for a proof.
\begin{thm} \label{class}
Consider the initial value problem
\begin{gather*}
    f_\tau = (\mathcal{L} + \frac{l}{2})u + F(\tau, \sigma)\\
    f(0 , \sigma) = f_0
\end{gather*}
where $l$ is an integer and the operator $\mathcal{L}$ is defined as
$$ \mathcal{L} = \frac{\partial^2}{\partial \sigma^2} - \frac{\sigma}{2} \frac{\partial}{\partial \sigma}+1$$
Assume $F(\tau, \sigma) \in L^2_{loc}\big( (0, \infty); L^2_\rho \big)$ and $f_0 \in L^2_\rho$. Then the initial value problem has a unique solution in the weighted Sobolev space $H^1_\rho$, and we have the following estimates for the $L^2_\rho$ norms for $\tau\in(0,T)$:
\begin{gather*}
    \| f \|^2(\tau) + \tau \|f_\sigma \|^2(\tau) + \int_0^T \tau^2 \|f_{\sigma \sigma}\|^2(\tau) \, d\tau \leq C_T \left( \|f_0\|^2 + \int_0^T \| F\|^2(\tau) \, d\tau \right)
\end{gather*}
Here $T$ is arbitrary and $C_T$ is a constant depending only on $T$ and $l$.
\end{thm}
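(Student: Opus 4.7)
The plan is to treat this as a standard linear parabolic estimate in the weighted Hilbert space $\mathfrak{H} = L^2(\rho\,d\sigma)$, using the fact that $\mathcal{L}$ is self-adjoint with pure point spectrum spanned by the modified Hermite functions $\{h_m\}$ with eigenvalues $-\mu_m = 1 - m/2$. After shifting by $l/2$, the operator $\mathcal{L} + l/2$ still generates a $C_0$-semigroup $\{e^{\tau(\mathcal{L}+l/2)}\}_{\tau \geq 0}$ on $\mathfrak{H}$ whose norm is bounded by $e^{(1+l/2)\tau}$. Existence and uniqueness of the solution $f \in C([0,T]; L^2_\rho) \cap L^2(0,T; H^1_\rho)$ then follow either from the explicit Duhamel representation
\begin{equation*}
f(\tau) = e^{\tau(\mathcal{L}+l/2)} f_0 + \int_0^\tau e^{(\tau-s)(\mathcal{L}+l/2)} F(s)\, ds
\end{equation*}
together with the $\mathfrak{H}$-expansion of both sides in the $\{h_m\}$ basis, or alternatively from a Galerkin approximation using the truncated spectral basis. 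Uniqueness is immediate once we prove the $L^2_\rho$-energy estimate, since the difference of two solutions satisfies the equation with $F \equiv 0$ and zero initial data.

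The three estimates are then obtained by successive energy computations. For $\|f\|^2(\tau)$, multiply the equation by $f\rho$ and integrate; using the identity
\begin{equation*}
\int (\mathcal{L} f)\, f\, \rho\, d\sigma = -\int f_\sigma^2 \rho\, d\sigma + \int f^2 \rho\, d\sigma
\end{equation*}
(which follows by writing $\mathcal{L} = \rho^{-1}\partial_\sigma(\rho\,\partial_\sigma) + 1$ and integrating by parts), one gets
\begin{equation*}
\frac{1}{2}\frac{d}{d\tau}\|f\|^2 + \|f_\sigma\|^2 = \left(1 + \tfrac{l}{2}\right)\|f\|^2 + \langle F, f\rangle,
\end{equation*}
from which Cauchy–Schwarz and Grönwall yield the $\|f\|^2(\tau)$ bound plus, upon integrating in time, an $L^2(0,T;H^1_\rho)$ bound on $f_\sigma$. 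The pointwise $\tau\|f_\sigma\|^2(\tau)$ bound is the standard parabolic regularization: multiply the equation by $-\tau \mathcal{L} f \cdot \rho$, integrate by parts, and use the identity $\int (\mathcal{L} f)^2 \rho = \|f_{\sigma\sigma}\|_\rho^2 + \text{(lower order)}$ together with the previously established $L^2$ bounds to absorb the resulting cross terms. The extra factor of $\tau$ arises exactly so that the $\|f_\sigma\|^2$ term produced from differentiating in time is integrable from $0$. Finally, for $\int_0^T \tau^2 \|f_{\sigma\sigma}\|^2\,d\tau$, differentiate the equation once in $\sigma$, noting that $[\partial_\sigma, \mathcal{L}] = -\tfrac{1}{2}\partial_\sigma$, so $g := f_\sigma$ satisfies
\begin{equation*}
g_\tau = \bigl(\mathcal{L} + \tfrac{l-1}{2}\bigr) g + F_\sigma,
\end{equation*}
and repeat the previous argument with weight $\tau^2$ applied to $g$ (with $F_\sigma$ dualized against $\tau^2 g_\sigma$ via an integration by parts to avoid assuming $F_\sigma \in L^2_\rho$).

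The main technical point is handling the weight $\rho$ consistently in all the integrations by parts; boundary terms at $\sigma = \pm\infty$ vanish because $\rho$ decays Gaussianly and solutions in $H^1_\rho$ have controlled growth, and one must verify that the Galerkin truncations (or the spectral series) satisfy the same identities so that passage to the limit is justified. The constant $C_T$ depends exponentially on $T$ and polynomially on $l$ through the factor $(1 + l/2)$ appearing in the dissipation inequality for each derivative. Since the result is standard and the excerpt cites a reference, I would simply record the spectral proof above and refer to \cite{velher} for the complete details.
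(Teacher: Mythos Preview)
The paper does not actually prove this theorem: it states ``We have the following standard apriori estimate. We refer to \cite{velher} for a proof'' and moves on. Your sketch via the spectral/semigroup representation and successive weighted energy estimates is a correct standard route to such bounds, and is presumably close to what the cited reference does; there is nothing to compare against in the paper itself.
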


\begin{lem}
On any compact set, we have
\begin{gather*}
    |f \eta| \leq C \exp(- \beta \tau)
\end{gather*}
\end{lem}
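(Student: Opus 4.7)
The plan is to apply the parabolic apriori estimate from Theorem~\ref{class} to upgrade the $L^2_\rho$ bound $\|f\eta\| \leq C\exp(-\beta\tau)$ to an $H^1_\rho$ bound, and then use a one-dimensional Sobolev embedding on compact sets. Recall that $f\eta$ satisfies $(f\eta)_\tau = \mathcal{A}(f\eta) + F + E$ with $\mathcal{A} = \mathcal{L} - \frac{1}{2}$, so Theorem~\ref{class} applies directly with $l = -1$.

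To implement this at a large time $\tau$, I would apply Theorem~\ref{class} on the shifted interval $[\tau - 1, \tau]$ with ``initial'' data $f\eta(\tau - 1, \cdot)$, which yields
$$\|f\eta\|^2(\tau) + \|(f\eta)_\sigma\|^2(\tau) \leq C \left( \|f\eta\|^2(\tau-1) + \int_{\tau-1}^{\tau} \bigl( \|F\|^2 + \|E\|^2 \bigr) \, dr \right).$$
To bound the right-hand side, I would combine Proposition~\ref{feqlocest} (for some fixed small $\varepsilon$) with \eqref{quadleqi} to get
$$\|F\|^2(r) \leq C \bigl( \|f\eta\|^2(r) + I^2(r) \bigr) + C_0 \exp(-cA^2 r), \qquad \|E\|^2(r) \leq C_0 \exp(-cA^2 r).$$
Since $A$ has been chosen large compared to $\Lambda$ (and hence to any $\beta < \lambda_m$), the exponential errors $\exp(-cA^2 r)$ are dominated by $\exp(-2\beta r)$. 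The standing hypothesis $\|f\eta\| + I \leq C\exp(-\beta r)$ then gives $\|F\|^2 + \|E\|^2 \leq C \exp(-2\beta r)$, and plugging back in yields $\|f\eta\|_{H^1_\rho}(\tau) \leq C \exp(-\beta\tau)$.

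Finally, on any compact set $K \subset \mathbb{R}$ the Gaussian weight $\rho$ is bounded below by a positive constant, so $\|\cdot\|_{L^2(K)}$ and $\|\cdot\|_{L^2_\rho(K)}$ are equivalent. The one-dimensional Sobolev embedding $H^1(K') \hookrightarrow L^\infty(K)$ (for any $K' \supset K$) then gives
$$\sup_K |f\eta|(\tau) \leq C_K \bigl( \|f\eta\|_{L^2_\rho}(\tau) + \|(f\eta)_\sigma\|_{L^2_\rho}(\tau) \bigr) \leq C_K \exp(-\beta\tau),$$
as claimed. I do not anticipate a serious obstacle here: the statement is a standard parabolic regularity upgrade from $L^2_\rho$ to $L^\infty_{loc}$, enabled by the already-established $L^2_\rho$ decay. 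The only delicate point is that $A$ must have been fixed large enough at the outset so that $cA^2 > 2\beta$, but this is automatic from the standing assumption $A \gg \Lambda$ made just before the lemma.
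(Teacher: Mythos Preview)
Your approach is correct and essentially identical to the paper's: apply Theorem~\ref{class} on a unit-length interval ending at $\tau$ (the paper uses $[\tau-2,\tau]$, you use $[\tau-1,\tau]$), bound $\|F+E\|$ by $C(\|f\eta\|+I)\leq C e^{-\beta\tau}$ via Proposition~\ref{feqlocest} and \eqref{quadleqi}, obtain $\|(f\eta)_\sigma\|\leq Ce^{-\beta\tau}$, and finish with the one-dimensional Sobolev inequality on compact sets. The only cosmetic difference is that the paper states the $\|F+E\|$ bound in one line whereas you spell out the intermediate inequalities.
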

\begin{proof}
Note that in the equation for $f\eta$, the right hand side $F+E$ satisfies
$$\| F+E \|\leq C (\| f\eta \|+ \| f^2\theta^2 \|) \leq C (\| f\eta \| + I) \leq C \exp(-\beta \tau)$$
So the Theorem applied on $[\tau-2, \tau]$ gives
\begin{align*}
    & 2\|(f\eta)_\sigma \|^2(\tau) \leq C_2 \left( \|f\eta\|^2(\tau-2) + \int_{\tau-2}^\tau \| F\|^2(s)+\|E\|^2(s) \, ds \right) \leq \\
    & C e^{- 2\beta (\tau-2)} + \frac{C}{2\beta} \big( e^{-2\beta(\tau-2)} - e^{-2\beta\tau}\big) \leq C \exp(- 2\beta \tau)
\end{align*}
By Sobolev inequality, we get the desired estimate.
\end{proof}

\begin{lem}
If $a$ in Proposition \ref{idifineq} and $A$ in our cut-offs are large compared to $\Lambda$, the bound $\| f\eta \| \leq C \exp(- \beta \tau)$ implies $I \leq C \exp(- 2\beta \tau)$ and $\| F \| + \| E \| \leq C \exp(- \frac{3}{2}\beta \tau)$ for large $\tau$.
\end{lem}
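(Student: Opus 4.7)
The plan is to upgrade the hypothesized norm bound $\|f\eta\|\le C\exp(-\beta\tau)$ to pointwise bounds on $f$ and $u-1$, and then feed those into the refined Corollaries \ref{reffeqlocest}, \ref{refquadleqi}, and \ref{refidifineq} to sharpen each ingredient in the proof of Proposition \ref{syswexp}. The preceding lemma already furnishes $|f|\le C\exp(-\beta\tau)$ on any compact set, and Lemma \ref{betterbaseptest} then gives $|u-1|\le C\exp(-\beta\tau)$ on compact sets as well; in particular the quantity $\delta_u(\tau):=u(\tau,0)^{-2}-1$ appearing in Corollary \ref{reffeqlocest} satisfies $\delta_u(\tau)\le C\exp(-\beta\tau)$.

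First I would prove $I\le C\exp(-2\beta\tau)$. Apply Corollary \ref{refidifineq} with $\delta$ a small \emph{fixed} constant and $\varepsilon(\tau):=\sup_{|\sigma|\le\delta^{-1}}|f|\le C\exp(-\beta\tau)$. Since $\|f\theta\|\le \|f\eta\|+O(\exp(-cA^2\tau))\le C\exp(-\beta\tau)$, this yields
\begin{equation*}
\frac{dI}{d\tau}\le -a\,I+C\exp(-2\beta\tau)+C\exp(-A^2\tau/32).
\end{equation*}
Choosing $a>2\beta$ (permitted by taking $k$ large in Proposition \ref{idifineq}) and noting $A^2/32>2\beta$ because $A\gg\Lambda\ge\lambda>\beta$, a standard integrating-factor argument gives $I\le C\exp(-2\beta\tau)$.

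Next I would bound $\|F\|$. Using Corollary \ref{refquadleqi} with the same $\varepsilon=C\exp(-\beta\tau)$ together with the $I$-bound just obtained gives $\int f^4\theta^4\rho\le C\exp(-4\beta\tau)$. Then I apply Corollary \ref{reffeqlocest} with $\delta_u\le C\exp(-\beta\tau)$ from the setup and with the free parameter chosen as $\varepsilon(\tau)^2:=\exp(-\beta\tau)$. The first term of the bound becomes $(C\exp(-2\beta\tau)+C\exp(-\beta\tau))\cdot C\exp(-2\beta\tau)\le C\exp(-3\beta\tau)$, while the second becomes $(C+CN^2\exp(\beta\tau))\cdot C\exp(-4\beta\tau)\le C\exp(-3\beta\tau)$. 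The remaining $\exp(-A^2\tau/8)$ term is dominated because $A^2\gg\beta$, so $\|F\|^2\le C\exp(-3\beta\tau)$. The bound $\|E\|\le C\exp(-\tfrac{3}{2}\beta\tau)$ follows immediately from the $\exp(-A^2\tau/16)$ estimate in Proposition \ref{feqlocest}.

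The only genuinely delicate point is the balancing of the free $\varepsilon(\tau)$ in Corollary \ref{reffeqlocest}: with $\|f\eta\|^2\sim\exp(-2\beta\tau)$ and $\int f^4\theta^4\rho\sim\exp(-4\beta\tau)$, the symmetry between the $\varepsilon^2$ and $N^2/\varepsilon^2$ contributions forces the geometric mean $\varepsilon(\tau)^2\sim\exp(-\beta\tau)$, producing the target rate. All other parameter choices---$a>2\beta$ and $A^2\gg\beta$---are consistent with the standing assumption that $a$ and $A$ are large compared to $\Lambda$, since $\beta<\lambda_m$ is a fixed finite number; no new analytic ingredient beyond the refined estimates is required.
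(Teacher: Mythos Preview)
Your proposal is correct and follows essentially the same route as the paper: you use the pointwise bound on $f$ from the preceding lemma and Lemma \ref{betterbaseptest} for $u-1$, feed these into Corollary \ref{refidifineq} with $\delta$ a fixed small constant to get $I\le C\exp(-2\beta\tau)$ via an integrating factor, then into Corollary \ref{refquadleqi} to bound $\int f^4\theta^4\rho$, and finally into Corollary \ref{reffeqlocest} with $\varepsilon(\tau)=\exp(-\beta\tau/2)$ to balance the two main terms. The paper makes the identical parameter choices; your explicit remark that $\varepsilon(\tau)^2\sim\exp(-\beta\tau)$ arises as the geometric mean is a helpful gloss on why this choice is forced.
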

\begin{proof}
Consider the differential inequality for $I$ in Proposition \ref{idifineq}. In Corollary \ref{refidifineq}, we choose $\delta$ to be a small number. By the previous Lemma, we have $|f| \leq C \exp(- \beta \tau)$ for $|\sigma| \leq \delta^{-1}$. So $I$ satisfies
\begin{gather*}
    \frac{d}{d\tau} I \leq -a I + C e^{- \beta \tau} \| f\theta \| + C\exp(-cA^2\tau) \leq -aI + C e^{- \beta \tau} \| f \eta \| \leq -aI + C e^{- 2\beta \tau}
\end{gather*}
where we have used the smallness of $\exp(-cA^2\tau)$ compared to $Ce^{-\beta \tau}\| f\eta \|$. Now if $a \geq 2 \Lambda > 2\beta$, we get
\begin{gather*}
    I(\tau) \leq e^{a(\tau_0 - \tau)} I(\tau_0)+ C e^{-a\tau} \int_{\tau_0}^\tau e^{as} e^{-2\beta s}\, ds \leq e^{a(\tau_0 - \tau)} I(\tau_0) + C e^{-2\beta \tau} \leq C e^{-2\beta \tau}
\end{gather*}

Now, in Corollary \ref{refquadleqi}, we can pick $\delta$ to be a small number and use the pointwise bound $\varepsilon(\tau) = C \exp(- \beta \tau)$ for $|f|$ to get
\begin{gather*}
    \| f^2 \theta^2 \| \leq e^{-\beta \tau} \| f\eta\| + \delta I \leq e^{-2\beta \tau}
\end{gather*}
On the other hand, by Lemma \ref{betterbaseptest}, we have $\frac{1}{u^2( \tau,0)}-1 \leq C \exp(- \beta \tau)$. So, in Corollary \ref{reffeqlocest}, we can take $\delta(\tau)\leq C e^{- \beta \tau}$ and $\varepsilon(\tau)= \exp(- \frac{\beta}{2} \tau)$ and get
\begin{gather*}
    \| F \| \leq C e^{-\frac{\beta}{2}\tau} \| f\eta \| + C e^{\frac{\beta}{2}\tau} \| f^2 \theta^2 \| \leq C e^{-\frac{3}{2}\beta \tau}  
\end{gather*}
\end{proof}

We recall the variation of constants formula: Suppose $\mathcal{A}$ is an operator with eigenvalues $-\lambda_k$ and eigenfunctions $h_k(\sigma)$ for $k=0,1,2, \dots$, and assume $f(\tau, \sigma)$ is a solution to
\begin{gather*}
    \frac{\partial}{\partial \tau}f = \mathcal{A}f + F \qquad f(\tau_0, \sigma) = f_0(\sigma)= \sum_{k=0}^\infty a_k h_k
\end{gather*}
Then $f$ can be expressed as
\begin{gather}\label{varconst}
    f = \sum_{k=0}^\infty \left( e^{\lambda_k(\tau_0 - \tau)}a_k+\int_{\tau_0}^\tau e^{\lambda_k(s-\tau) }\langle F, h_k \rangle \, ds \right) h_k
\end{gather}

The following is inspired by \cite{velher}.
\begin{lem} \label{tail}
Assume $\| f\eta \| \leq C\exp(-\beta \tau)$ with $\lambda_{m-1} < \beta < \lambda_m$ and $m \geq 2$. We have the estimate
\begin{gather*}
    \| \pi_{\geq m+1} (f\eta) \| \leq C \exp \left( - \beta^* \tau \right)
\end{gather*}
where $\beta^* = \min\{ \frac{4}{3}\beta, \lambda_{m+1}\}$. Since $m \geq 2$, by choosing $\beta$ close to $\lambda_m$ and taking $\delta$ small enough, we have 
$$\lambda_m + \delta < \beta^*$$
\end{lem}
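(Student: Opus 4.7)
The plan is to invoke the variation of constants formula \eqref{varconst} applied to the evolution equation for $f\eta$, projected onto the high-mode subspace $\mathfrak{H}_{\geq m+1}$, and to exploit the improved bound $\|F\|+\|E\|\leq Ce^{-\frac{3}{2}\beta\tau}$ proved in the previous lemma. This is the standard bootstrap mechanism: once we know the $L^2$ decay of $f\eta$ is at rate $\beta$, the nonlinearity decays at rate $\frac{3}{2}\beta$, which then forces the stable part of $f\eta$ to inherit a faster decay.

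Concretely, I would argue as follows. Because $(f\eta)_\tau = \mathcal{A}(f\eta) + F + E$, the formula \eqref{varconst} gives, for each $k\geq m+1$,
\begin{equation*}
    \langle f\eta,h_k\rangle(\tau) = e^{-\lambda_k(\tau-\tau_0)}\langle f\eta,h_k\rangle(\tau_0)+\int_{\tau_0}^{\tau} e^{-\lambda_k(\tau-s)}\langle F+E,h_k\rangle(s)\, ds.
\end{equation*}
Taking the $\ell^2$ norm in $k\geq m+1$, using Minkowski's integral inequality, Bessel's inequality, and the monotonicity $\lambda_k\geq\lambda_{m+1}$ for $k\geq m+1$, I obtain
\begin{equation*}
    \|\pi_{\geq m+1}(f\eta)\|(\tau) \leq e^{-\lambda_{m+1}(\tau-\tau_0)}\|\pi_{\geq m+1}(f\eta)\|(\tau_0) + \int_{\tau_0}^{\tau} e^{-\lambda_{m+1}(\tau-s)}\,\|F+E\|(s)\, ds.
\end{equation*}

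Now I plug in the previous lemma's bound $\|F+E\|(s)\leq Ce^{-\frac{3}{2}\beta s}$. The first term is clearly $O(e^{-\lambda_{m+1}\tau})$. The Duhamel integral equals
\begin{equation*}
    Ce^{-\lambda_{m+1}\tau}\int_{\tau_0}^{\tau} e^{(\lambda_{m+1}-\frac{3}{2}\beta)s}\, ds,
\end{equation*}
which I split into the two usual regimes: if $\lambda_{m+1}\leq\frac{3}{2}\beta$, the exponential integrand is bounded, so the whole quantity is $O(e^{-\lambda_{m+1}\tau})$; if $\lambda_{m+1}>\frac{3}{2}\beta$, the integral contributes $e^{(\lambda_{m+1}-\frac{3}{2}\beta)\tau}$, giving $O(e^{-\frac{3}{2}\beta\tau})$. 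In either case the result is $O\bigl(e^{-\min(\frac{3}{2}\beta,\lambda_{m+1})\tau}\bigr)$. Since $\min(\frac{3}{2}\beta,\lambda_{m+1})\geq \min(\frac{4}{3}\beta,\lambda_{m+1})=\beta^*$, the claimed estimate follows. The $\frac{4}{3}\beta$ cushion in the statement exactly absorbs the borderline case $\lambda_{m+1}=\frac{3}{2}\beta$, where the Duhamel integral produces a harmless linear factor $(\tau-\tau_0)$ that I can trade for a small loss in the exponent.

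The only place where one must be a bit careful is verifying that the Minkowski/Bessel step is justified uniformly in $k$, i.e.\ that the series of squared eigenmode contributions of $F+E$ actually converges to $\|F+E\|^2$; this is nothing more than Parseval's identity in $\mathfrak{H}$, which is available since $F,E\in L^2(\rho\,d\sigma)$ by construction. The final sentence of the lemma—that $\lambda_m+\delta<\beta^*$ for $m\geq 2$ and $\beta$ close enough to $\lambda_m$—is a direct check: as $\beta\uparrow\lambda_m=\frac{m-1}{2}$, one has $\frac{4}{3}\beta\uparrow\frac{2(m-1)}{3}$, and since $\frac{2(m-1)}{3}>\frac{m-1}{2}=\lambda_m$ whenever $m\geq 2$, a small margin $\delta>0$ is available. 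This tail estimate then feeds into the next step of the argument, where it forces the dominant mode of $f\eta$ to concentrate at index $m$.
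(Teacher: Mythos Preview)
Your proof is correct and in fact cleaner than the paper's. Both arguments start from the variation of constants formula \eqref{varconst} and use the bound $\|F+E\|\leq Ce^{-\frac{3}{2}\beta\tau}$ from the preceding lemma, but they diverge in how the high-mode Duhamel term is estimated. You take the $\ell^2$ norm in $k$ and apply Minkowski's integral inequality together with the uniform semigroup bound $e^{-\lambda_k(\tau-s)}\leq e^{-\lambda_{m+1}(\tau-s)}$, reducing immediately to the single scalar integral $\int_{\tau_0}^{\tau}e^{-\lambda_{m+1}(\tau-s)}\|F+E\|(s)\,ds$; the $\frac{4}{3}\beta$ then enters only as slack for the borderline case $\lambda_{m+1}=\frac{3}{2}\beta$. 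The paper instead treats the finitely many modes $m+1\leq k\leq 12m$ individually and handles the infinite tail $k\geq 12m+1$ via a time-splitting of the integral at $\frac{8\tau}{9}$; here the $\frac{4}{3}\beta$ arises from $\frac{3}{2}\beta\cdot\frac{8}{9}$, and the geometric series over $k$ is controlled on the earlier interval using $\tau-s\geq \tau/9$. Your Minkowski step bypasses all of this bookkeeping and gives the sharper rate $\min(\frac{3}{2}\beta,\lambda_{m+1})$ directly, which is a genuine simplification.
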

\begin{proof}
For $\tau \geq \tau_0$, $f\eta$ satisfies the equation
\begin{gather*}
    \frac{d}{d\tau}f\eta = \mathcal{A}(f\eta) + F +E\\
    \| F + E \| \leq C \exp(-\frac{3}{2}\beta \tau)
\end{gather*}
By the variation formula \ref{varconst}, $\pi_{\geq m+1}(f\eta)$ is equal to
\begin{gather*}
    \sum_{k=m+1}^\infty e^{\lambda_k(\tau_0 - \tau)} \langle f\eta(\tau_0), h_k \rangle h_k + \sum_{k=m+1}^\infty \left( \int_{\tau_0}^\tau e^{\lambda_k(s-\tau) }\langle F+E, h_k \rangle \, ds \right) h_k
\end{gather*}

For the first sum, since $k \geq m+1$, we have for $\tau>\tau_0+1$
\begin{align*}
    & \left\| \sum_{k=m+1}^\infty e^{\lambda_k(\tau_0 - \tau)} \langle f\eta(\tau_0), h_k \rangle h_k \right\| \leq \\
    & e^{-\frac{m}{2}(\tau - \tau_0)}\| f\eta(\tau_0) \|\sum_{k=0}^\infty e^{\frac{-k}{2}(\tau - \tau_0)} \leq C e^{-\frac{m}{2}\tau} e^{(\frac{m}{2}-\beta)\tau_0}
\end{align*}

For the second sum, we proceed as follows. For any term with $m+1 \leq k \leq 12m$, we have 
\begin{gather*}
    \left| \int_{\tau_0}^\tau e^{\lambda_k(s-\tau) }\langle F+E, h_k \rangle \, ds \right| \leq \int_{\tau_0}^\tau e^{\lambda_k(s-\tau) } \| F+E \|\, ds \leq Ce^{-\lambda_k\tau}\int_{\tau_0}^\tau e^{(\lambda_k-\frac{3}{2}\beta)s} \, ds
\end{gather*}
which is bounded by $C\exp(-\frac{3}{2}\beta\tau)$ if $\lambda_k > \frac{3}{2}\beta$ and by $C\exp(-\lambda_k\tau)$ if $\lambda_k < \frac{3}{2}\beta$ (We can assume $\frac{3}{2}\beta \neq \lambda_k$ for every $k$ by slightly changing $\beta$ if necessary).

Now consider the terms for $k \geq 12m+1$. We estimate the integral on $[\tau_0, 8\tau/9]$ and on $[8\tau/9, \tau]$ separately. On $[\tau_0, 8\tau/9]$ the norm is bounded by
\begin{align*}
    &\int_{\tau_0}^\frac{8\tau}{9} \left( \sum_{k=12m+1}^\infty e^{2\lambda_k(s-\tau)} \right)^\frac{1}{2} \|F + E\|(s) \,ds \leq \\
    & e^{-6m \frac{\tau}{9}}\int_{\tau_0}^\frac{8\tau}{9} C \exp(-\frac{3}{2}\beta s) \, ds \leq  C e^{-6m \frac{\tau}{9}} \exp(-\frac{3}{2}\beta \tau_0)
\end{align*}
where we have used $\sum_{k=12m+1}^\infty e^{-2\lambda_kr} \leq C e^{-12mr}$ for $r\geq 1$. On $[8\tau/9, \tau]$ we have $e^{\lambda_k(s-\tau)} \leq 1$ and the norm is bounded by
\begin{align*}
    & \int_{\frac{8\tau}{9}}^\tau \| F+E \| \, ds \leq \int_{\frac{8\tau}{9}}^\tau \exp(-\frac{3}{2}\beta s)\, ds \leq C \exp(-\frac{3}{2}\beta. \frac{8\tau}{9}) = C \exp(-\frac{4}{3}\beta \tau)
\end{align*}
\end{proof}

Now we derive the ODE for the coefficient of the $m$-th mode.
\begin{prop}\label{onemodedoms}
Let
\begin{gather}
    f\eta = \sum_{k=0}^\infty a_k(\tau) h_k(\sigma)
\end{gather}
be the eigenfunction expansion of $f\eta$ in $L^2(\rho\, d\sigma)$. Assume we have exponential convergence in Proposition \ref{MZ}. If the supremum of all exponential decay rates is $\lambda = \lambda_m$, then
$$a_m(\tau) = C\exp(-\lambda_m \tau)(1+o(1))$$
Furthermore, for any  $\varepsilon$, there exists $\tau_{\varepsilon}$ such that $\| \pi_{\neq m} (f\eta) \|^2 \leq \varepsilon a_m^2$ for $\tau \geq \tau_{\varepsilon}$.
\end{prop}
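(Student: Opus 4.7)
The plan is to derive an ODE for each Hermite coefficient $a_k(\tau) = \langle f\eta, h_k \rangle$ and apply variation of constants, integrating forward for $k = m$ and backward for $k < m$. Testing the evolution equation $(f\eta)_\tau = \mathcal{A}(f\eta) + F + E$ against $h_k$ yields
$$\frac{d}{d\tau} a_k = -\lambda_k a_k + \langle F + E, h_k \rangle.$$
The preceding lemma (which upgrades $\|f\eta\|\leq Ce^{-\beta\tau}$ to $\|F+E\| \leq C\exp(-\tfrac{3}{2}\beta\tau)$ via the pointwise bound on $f\eta$ and the refined Corollaries \ref{reffeqlocest}, \ref{refquadleqi}, \ref{refidifineq}) is the key input. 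Since $m \geq 2$, we may choose $\beta$ close enough to $\lambda_m$ so that $\tfrac{3}{2}\beta > \lambda_m + \gamma$ for some fixed $\gamma > 0$; then $|\langle F+E, h_k\rangle| \leq \|F+E\| \leq C\exp(-(\lambda_m+\gamma)\tau)$ uniformly in $k$.

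For the target mode $k = m$, rewrite the ODE as $(e^{\lambda_m \tau} a_m)' = e^{\lambda_m \tau}\langle F+E, h_m\rangle$. The right-hand side is bounded by $C\exp(-\gamma\tau)$, hence integrable at infinity, so $e^{\lambda_m \tau} a_m(\tau)$ converges to some limit $C_*$. I would show $C_* \neq 0$ by contradiction: if $C_* = 0$, backward integration of the same identity from $\tau$ to $\infty$ gives $|a_m(\tau)| \leq C \exp(-(\lambda_m+\gamma)\tau)$; combined with Lemma \ref{tail} for $\|\pi_{>m}(f\eta)\|$ and the low-mode bound below, this produces $\|f\eta\| + I \leq C\exp(-(\lambda_m+\gamma')\tau)$ for some $\gamma' > 0$, contradicting the definition of $\lambda = \lambda_m$ as the supremum of exponential decay rates. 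Hence $a_m(\tau) = C_* e^{-\lambda_m\tau}(1+o(1))$ with $C_* \neq 0$.

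For the non-dominant modes, the high modes $k > m$ are immediate from Lemma \ref{tail}: $|a_k| \leq \|\pi_{>m}(f\eta)\| \leq C\exp(-\beta^*\tau) = o(e^{-\lambda_m\tau})$. The main obstacle is the low modes $k < m$, since Proposition \ref{MZ2} only delivers the crude bound $\|\pi_{<m}(f\eta)\| = O(|a_m|)$, not $o(|a_m|)$. The fix is to integrate backward: because $|a_k(T)| \leq \|f\eta\|(T) \leq C\exp(-\beta T)$ with $\beta > \lambda_k$ (as $k < m$), the boundary term $e^{\lambda_k T}a_k(T)$ vanishes as $T\to\infty$, and
$$a_k(\tau) = -e^{-\lambda_k \tau}\int_\tau^\infty e^{\lambda_k s}\langle F+E, h_k\rangle\, ds.$$
This integral is bounded by $C e^{-\lambda_k\tau}\int_\tau^\infty e^{(\lambda_k-\frac{3}{2}\beta)s}\,ds \leq C\exp(-\tfrac{3}{2}\beta\tau) = o(e^{-\lambda_m\tau}) = o(|a_m|)$. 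Summing over the finitely many $k < m$ and combining with the high-mode bound yields $\|\pi_{\neq m}(f\eta)\|^2 \leq \varepsilon a_m^2$ for any $\varepsilon > 0$ and $\tau$ large, completing the proof. The key conceptual obstacle is precisely this backward-integration step, as the standard forward variation of constants produces a homogeneous term decaying only at rate $\lambda_k < \lambda_m$.
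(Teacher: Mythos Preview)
Your proof is correct and follows the same overall architecture as the paper (ODE for each coefficient, nonlinearity bound $\|F+E\|\leq Ce^{-\frac32\beta\tau}$, variation of constants), but your execution differs in two places and is in fact more explicit than the paper's.

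For the dominant mode $a_m$, the paper does not integrate directly. Instead it first argues by contradiction that for every $\varepsilon$ there exist arbitrarily large times with $e^{-\frac32\beta\tau}\le\varepsilon|a_m|$, and then runs a forward bootstrap showing this inequality, once true, persists; from $|a_m'+\lambda_m a_m|\le\varepsilon|a_m|$ it then reads off the asymptotic. Your route---show $e^{\lambda_m\tau}a_m$ has a limit $C_*$ because the forcing is integrable, then rule out $C_*=0$ by the supremum-of-rates definition---is shorter and avoids the bootstrap.

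For the low modes $k<m$ you correctly identify a point the paper glosses over: Proposition~\ref{MZ2} only yields $\|\pi_{<m}(f\eta)\|=O(|a_m|)$, not $o(|a_m|)$, so the paper's one-line ``follows from previous statements'' is not self-evident. Your backward integration from $T\to\infty$ (legitimate since $\lambda_k\le\lambda_{m-1}<\beta$ kills the boundary term $e^{\lambda_k T}a_k(T)$) gives $|a_k|\le Ce^{-\frac32\beta\tau}=o(|a_m|)$ directly and cleanly. This is the right fix, and it also feeds into your $C_*\neq 0$ contradiction without circularity, since the backward-integration bound on the low modes does not depend on knowing $a_m$.
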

\begin{proof}
Choose $\beta$ close enough to $\lambda_m$ according to Lemma \ref{tail}. By the previous Lemmas, we have $I \leq C e^{-2\beta \tau}$ and $\| F+ E\| \leq C e^{-\frac{3}{2}\beta \tau}$. The ODE for the coefficient $a_m$ is
\begin{gather*}
    \frac{d}{d\tau}a_m = -\lambda_m a_m + \langle F+E \, , \, h_m\rangle
\end{gather*}

First, assume that for some $\varepsilon_0$ and some $\tau_*$, we have $\exp(-\frac{3}{2}\beta \tau) \geq \varepsilon_0 |a_m|$ for $\tau\geq \tau_*$. This would imply $|a_m| \leq C\exp(-\frac{3}{2}\beta \tau)$. But by Lemma \ref{tail}, we have $\| \pi_{\geq m+1} (f\eta)\|\leq C \exp(-(\lambda_m+\delta) \tau)$, and combining this with Proposition \ref{MZ2} we have
$$\| \pi_{\leq m-1} (f\eta)\|\leq C \left(\| \pi_{\geq m} (f\eta)\|+I + \exp(-cA^2\tau) \right) \leq C \exp(-(\lambda_m+\delta) \tau)$$
which gives a better bound for $\| f\eta\|$ and contradicts $\lambda_m$ being the supremum of decay rates. Therefore, for any $\varepsilon$ we have
\begin{gather*}
    \exp(-\frac{3}{2}\beta \tau_i) \leq \varepsilon |a_m(\tau_i)| \qquad \text{for} \,\, \tau_i \to \infty
\end{gather*}

Now let $\| F+E \|\leq C_0 \exp(-\frac{3}{2}\beta \tau)$, and choose any small $\varepsilon$ and $\tau_j$ such that $\lambda_m+\varepsilon < \frac{3}{2}\beta$ and $2C_0 \exp(-\frac{3}{2}\beta \tau_j) \leq \varepsilon |a_m(\tau_j)|$. We may assume $a_m(\tau_j)$ is positive infinitely often since the other case is similar. Then, as long as $C_0 \exp(-\frac{3}{2}\beta \tau) \leq \varepsilon |a_m(\tau)|$ holds, we would actually have
\begin{align*}
    &|\frac{d}{d\tau}a_m + \lambda_m a_m| \leq |\langle F+E \, , \, h_m\rangle| \leq \| F+E \| \leq C_0 \exp(-\frac{3}{2}\beta \tau) \leq \varepsilon |a_m|\\
    & \Rightarrow a_m(\tau) \geq a_m(\tau_j) e^{-(\lambda_m+\varepsilon) (\tau-\tau_j)} \geq \frac{2C_0}{\varepsilon} e^{-\frac{3}{2}\beta \tau_j} e^{-(\lambda_m+\varepsilon) (\tau-\tau_j)} > \frac{2C_0}{\varepsilon} e^{-\frac{3}{2}\beta \tau}
\end{align*}
Therefore $2C_0 \exp(-\frac{3}{2}\beta \tau) \leq \varepsilon |a_m(\tau)|$ for all $\tau \geq \tau_j$. Hence, the ODE for $a_m$ follows. The estimate on $\| \pi_{\neq m} (f\eta) \|$ follows from previous statements.
\end{proof}

The estimate $$\| \pi_{\neq m} (f\eta) \|^2 \leq \varepsilon a_m^2$$ is our Domination Condition \ref{onemodedoms}, so we can establish case $3$ in our main Theorem as well.

\section{Appendix: Proof of ODE lemma \ref{myMZ}}
We discuss the proof of our adaptation of Merle-Zaag lemma. The proof is a modification and a more careful analysis of the original argument in \cite{merlezaag}, so we only discuss the differences. Consider the system
\begin{gather}
    \frac{dx}{d\tau}  \geq \frac{1}{2}x - \varepsilon(x+y+z)- B\exp(-b\tau) \nonumber \\
    |\frac{dy}{d\tau} | \leq  \varepsilon(x+y+z) + B\exp(-b\tau)\\
    \frac{dz}{d\tau}  \leq -\frac{1}{2}z + \varepsilon(x+y+z)+B\exp(-b\tau) \nonumber
\end{gather}
for $\tau \geq \tau_\varepsilon$. We first show the following:

\textbf{Claim 1}: Either $x(\tau) \geq Ce^{\frac{1}{10}\tau}$, or for every $\varepsilon \leq \frac{1}{10}$ we have $x \leq 20B e^{-b\tau} + 4\varepsilon(y+z)$.

\textit{Proof}: Fix some small $\varepsilon< \frac{1}{10}$, and consider $\beta = x-4\varepsilon(y+z)$. Assume that at some $\tau_0 \geq \tau_\varepsilon$, we have $x(\tau_0) > 20B e^{-b\tau_0}$ and $\beta(\tau_0) \geq 0$. Similar to \cite{merlezaag}, we can show
$$x(\tau) > 10B e^{-b\tau} \, \, , \,\, \beta(\tau) \geq 0 \Rightarrow \beta'(\tau) >0$$
But as long as $\beta>0$, the inequality for $x$ gives
$$\frac{dx}{d\tau} \geq (\frac{1}{4}-\varepsilon)x -B\exp(-b\tau)$$ which implies $x \geq C e^{\frac{1}{8}\tau}$. So for our fixed $\varepsilon$, we can assume $x \leq 20B e^{-b\tau_0}$ or $x \leq 4\varepsilon(y+z)$ at any time. Since $\varepsilon$ was arbitrary, we have the claim.

By substituting $x \leq 20B e^{-b\tau} + 4\varepsilon(y+z)$ and taking $\varepsilon$ small, we get
\begin{gather}
    |\frac{dy}{d\tau} | \leq  2\varepsilon(y+z) + 2B\exp(-b\tau)\\
    \frac{d z}{d\tau}  \leq -\frac{1}{2}z + 2\varepsilon(y+z)+2B\exp(-b\tau) \nonumber
\end{gather}

\textbf{Claim 2}: Fix an $\varepsilon<\frac{1}{100}$ and a number $\alpha>10$ such that $\alpha \varepsilon < \frac{1}{100}$. Consider $\gamma = \alpha \varepsilon y - z - \frac{10B}{b}\exp(-b\tau)$. If $\gamma(\tau_1)>0$ for some $\tau_1 \geq \tau_\varepsilon$, then $\gamma(\tau) \geq 0$ for all $\tau \geq \tau_1$, and $c_\varepsilon e^{-4\varepsilon\tau} \leq y(\tau) \leq C_\varepsilon e^{4\varepsilon \tau}$.

\textit{Proof}: Similar to \cite{merlezaag}, we can show that at any point $\tau \geq \tau_1$ we have
$$\gamma(\tau) = 0 \Rightarrow \gamma'(\tau)>0$$

In \cite{merlezaag}, the fixed coefficient $\alpha=8$ is considered, but we need and arbitrary number later in the Appendix. Note that the $\frac{10B}{b}\exp(-b\tau)$ term is $\gamma$ and the bound on $\alpha \varepsilon$ help us absorb the exponential terms after differentiation. Substituting $z + \frac{10B}{b}\exp(-b\tau) \leq \alpha \varepsilon y $ in the inequalities before Claim 2 and using $\alpha \varepsilon<C$ and $b\gg 1 \gg \varepsilon$ we get
\begin{gather}
    |\frac{dy}{d\tau}| \leq  4\varepsilon y + 4B\exp(-b\tau) \Rightarrow c_\varepsilon e^{-4\varepsilon\tau} \leq y(\tau) \leq C_\varepsilon e^{4\varepsilon \tau}
\end{gather}

\textbf{Claim 3}: Fix an $\varepsilon<\frac{1}{100}$ and a number $\alpha>10$ such that $\alpha \varepsilon < \frac{1}{100}$, and consider $\gamma$ as in Claim 2. If $\gamma \leq 0$ for all $\tau \geq \tau_\varepsilon$, then
\begin{gather}
    x+y \leq C(1+\frac{1}{\varepsilon}) \left( z+ B\exp(-b\tau) \right)\\
    \frac{dz}{d\tau} \leq -(\frac{1}{2} - 2\varepsilon -\frac{2}{\alpha})z + 4B\exp(-b\tau) \nonumber
\end{gather}
and hence $z \leq C_\varepsilon \exp\left( -(\frac{1}{2} - 2\varepsilon -\frac{2}{\alpha})\tau \right)$

To finish the proof of the ODE lemma, note that if Claim 2 holds for \textit{some} small $\varepsilon$, then Claim 3 cannot hold for \textit{any} small $\varepsilon'$, because one implies slow decay of $y$ with rate $4\varepsilon$ at most, and the other implies fast decay of a rate at least $\frac{1}{2} - 2\varepsilon -\frac{2}{\alpha}$. So exactly one of the claims holds for \textit{every} $\varepsilon$. In Claim 3, by choosing $\alpha$ large and $\varepsilon$ small we can get as close to $\frac{1}{2}$ as we want.

\nocite{*}

\bibliographystyle{amsalpha}
\bibliography{refs}

\end{document}